\documentclass[12pt]{article}

\usepackage[a4paper, total={6in, 9in}]{geometry}

\usepackage{mathptmx}
\usepackage{verbatim}
\usepackage{amsfonts}
\usepackage[normalem]{ulem}

\usepackage{amsmath,amsfonts,amssymb,amsthm}
\usepackage{bm}

\usepackage{hyperref}
\usepackage{color}
\usepackage{booktabs}

\usepackage{tikz}
\usepackage{graphicx} 
\usepackage{float}
\tikzset{
    ns/.style={shape=circle, inner sep=1pt, color=black},
    nsf/.style={shape=circle, inner sep=1.2pt, color=black, fill=black} 
}

\usepackage[numbers,sort&compress]{natbib}

\allowdisplaybreaks
\def\qed{\nopagebreak\hfill{\rule{4pt}{7pt}}}

\newtheorem{theorem}{Theorem}[section]

\numberwithin{equation}{section}

\newcommand{\gen}{{\rm Gen}}

\newcommand{\Des}{{\rm Des}}
\newcommand{\Asc}{{\rm Asc}}

\newcommand{\des}{{\rm des}}
\newcommand{\asc}{{\rm asc}}

\newcommand{\inv}{{\rm inv}}
\newcommand{\lrmax}{{\rm lrmax}}
\newcommand{\rlmax}{{\rm rlmax}}
\newcommand{\lrmin}{{\rm lrmin}}
\newcommand{\rlmin}{{\rm rlmin}}
\newcommand{\cyc}{{\rm cyc}}

\newcommand{\exc}{{\rm exc}}
\newcommand{\suc}{{\rm suc}}
\newcommand{\Ima}{{\rm Im}}
\newcommand{\jump}{{\rm jump}}
\newcommand{\drop}{{\rm drop}}
\newcommand{\fix}{{\rm fix}}
\newcommand{\lsuc}{{\rm lsuc}}
\newcommand{\basc}{{\rm basc}}
\newcommand{\lrf}[1]{\lfloor #1\rfloor}

\DeclareMathOperator{\lda}{lda}   
\DeclareMathOperator{\rda}{rda}   
\DeclareMathOperator{\lrda}{lrda} 

\DeclareMathOperator{\ldd}{ldd}   
\DeclareMathOperator{\dd}{dd}     
\DeclareMathOperator{\rdd}{rdd}   

\usepackage{graphicx}
\usepackage{tikz}
\usetikzlibrary{shapes.geometric}
\tikzset{
    every node/.style={circle,   inner sep=1pt,   fill=white!60},  
    tn/.style={shape=circle,   draw,   color=black!70},  
    tn1/.style={shape=circle,   draw},  
    tn2/.style={shape=circle,   draw,   inner sep=1.5pt},  
    tn5/.style={shape=circle,   inner sep=1.6pt,   draw,   color=black!70},  
    tn3/.style={shape=rectangle,   draw,   inner sep=1.5pt},  
    tn4/.style={shape=rectangle,   draw,   inner sep=1.5pt,   color=black!70}, 
    ns/.style={shape=circle,  inner sep=1pt,  color=black}
}

\begin{document}

\noindent

\begin{minipage}{0.5\textwidth}

\raggedright
English Translation\\
The Grammatical Calculus (in Chinese)\\
Scientia Sinica Mathematica\\
56(9) (2026) 1--34.
 \end{minipage}

\begin{center}

 {\Large\bfseries The Grammatical Calculus }
\\[20pt]

William Y.C. Chen

\vskip 3mm

Center for Applied Mathematics\\
Tianjin University\\
Tianjin 300072, P.R. China

\vskip 3mm

Email: { chenyc@tju.edu.cn
}
   
\end{center}

\vspace{0.6cm}

\centerline{\bf Abstract}

 We give an introduction to the grammatical calculus, which originated from the umbral calculus of Rota. Context-free
grammars arise in computer science. Differential operators
can be viewed as the production rules of context-free grammars. By the properties of differential operators, 
we may perform the grammatical calculus on a rigorous
basis,  for the purposes of deriving combinatorial identities and computing generating functions. 
A grammatical labeling serves as a bridge between
a combinatorial structure  and the grammar. 
A grammar may be instrumental in constructing 
a bijection between two combinatorial structures
sharing the same grammar. Such a bijection is called
a grammar assisted bijection. Many combinatorial structures
admit  recursive constructions that are context-free 
in some sense,  such as those corresponding to the Eulerian polynomials,  Bell polynomials, the Ramanujan polynomials and the Narayana polynomials.

\vspace{0.3cm}

\noindent \textbf{Key words:} Grammatical calculus, grammatical labelings, Eulerian polynomials, Bell polynomials, Ramanujan polynomials, Narayana polynomials. 
 
 \noindent \textbf{MSC subject classification:} 05A15,  05A19.

\newpage

\section{Introduction}

 The notion of context-free grammars is due to Chomsky,
 and it forms the foundation of programming languages,
 see, for example, \cite{Chomsky-1,Chomsky-2,Hopcroft-Ullman}. 
Meanwhile, the linear differential operator $D$ 
can be viewed as having a context-free property, namely, 
\begin{equation}
\label{prop:context-free}
     D(fg)  = D(f)\, g + f \, D(g).  
\end{equation} 
In other words, when $D$ is applied to $f$, the result is independent of $g$ and vice versa.  
That is to say that we only
need a differential algebra, that is, an algebra
equipped with a derivation \(D\).
For example, 
consider bivariate polynomials 
in $x$ and $y$, and define the  derivative $D$ by
\begin{equation}\label{gra:Euler}
D(x)= xy,   \; D(y)=xy. 
\end{equation}
From now on, we call such derivatives $D$ formal derivatives.
In the terminology of formal languages,  the operator $D$ defined above
can be represented by the following production rules
\begin{equation}\label{ge}
x \rightarrow xy, \; y \rightarrow xy,
\end{equation}
which turns out to be the grammar for the Eulerian polynomials. 
Strictly speaking,
the grammars we are concerned with  are 
not devised to describe the recursive
generation of a formal language, but rather,
a recursive construction of a combinatorial
structure. 
While the letters of a formal language are
obviously noncommutative, we usually assume that 
 the variables are commutative. The noncommutative
case of the grammatical calculus will be discussed
in the last section. 
 
The idea of 
the grammatical calculus was presented in 
\cite{Chen-1993} along with grammatical derivations
of some classical formulas,
which is regarded as putting the 
traditional symbolic argument on a firm foundation by
Johnson \cite{Johnson}. 
It can be seen that
the Lagrange inversion formula is equivalent to 
Cayley's formula on labeled trees, in the sense that
these two formulas correspond to essentially the same
grammatical formulation, see \cite{Chen-1993}.
It is worth mentioning that 
the umbral calculus provides a 
rigorous mechanism for the classical
symbolic method, see \cite{Rota-1975}. 
In fact, the idea of the grammatical calculus 
was motivated by the umbral calculus of Rota.

The formal derivative \eqref{gra:Euler} can be 
expressed as a differential operator
\begin{align} \label{D-Eulerian}
 D= xy \frac{\partial}{\partial x} + xy 
\frac{\partial}{\partial y}.
\end{align}
Dumont \cite{Dumont-1996} pointed out that
the production rules are more convenient 
in dealing with combinatorial structures.
On the other hand, when it comes to 
computation, it is natural to
employ the above differential operator 
representation while using a CAS (computer 
algebra system), such as Maple and Mathematica. 

 Consider the homogeneous 
 bivariate Eulerian polynomials $A_n(x,y)$,  see \cite{Petersen-2015,  Stanley-I}.   Set $A_0(x,  y)=y$.
 For $n\geq 1$,  define 
 \begin{align}
 \label{eq:A_n}
     A_n(x,  y)= D^n(y).
 \end{align} 
 The first few values of $A_n(x, y)$ are as follows:
\begin{align*}
A_1(x,  y)  &= xy,\\[3pt]
A_2(x,  y) &= xy^2 + x^2y,   \\[3pt]
A_3(x,  y) &= xy^3 + 4x^2y^2 + x^3y,   \\[3pt]
A_4(x,  y) &= xy^4 + 11x^2y^3 + 11x^3y^2 + x^4y,   \\[3pt]
A_5(x,  y) &= xy^5 + 26x^2y^4 + 66x^3y^3 + 26x^4y^2 + x^5y.
\end{align*}
As $A_n(x,  y)$ can be generated by $D$,   
the operator $D$ can be considered a 
creation operator,  but it also possesses the
differential property of an annihilation operator. 
Setting $y=1$, $A_n(x,y)$ becomes the usual Eulerian polynomial $A_n(x)$.

For $n\geq 1$,   let $S_n$ denote the set of
permutations of $[n]=\{1,   2,   \ldots,   n\}$. 
The set of descents and the number of 
descents of a permutation $\sigma=\sigma_1 
\sigma_2 \cdots \sigma_n \in S_n$ are defined by
\[ \Des(\sigma) = \{ i \mid 1\leq i \leq n,  
\sigma_i > \sigma_{i+1} \},   \quad 
\des(\sigma) = \left| \Des(\sigma) \right|,   \]
where we append a zero to the end of \(\sigma\), that is, \(\sigma_{n+1}=0\). Likewise,
we assume that $\sigma_0=0$ 
and define the ascent set
and the number of ascents of $\sigma$ by
 \[ \Asc(\sigma) = \{ i \mid 0\leq i \leq n-1,  
\sigma_i < \sigma_{i+1} \},   \quad 
\asc(\sigma) = \left| \Asc(\sigma) \right|.   \]
An index in the ascent set or descent set is called
an ascent or a descent, respectively.

Below is the combinatorial definition of the
bivariate Eulerian polynomials. 
For $n\ge1$,
\begin{align}\label{defi:Eulerianpol}
A_n(x,  y) = \sum_{\sigma \in S_n} x^{\asc(\sigma)} 
y^{\des(\sigma)}.
\end{align}

The equivalence of the 
operator definition and the combinatorial
definition of $A_n(x,y)$ can be easily
justified by induction. 
However, the idea of a grammatical labeling
just serves this purpose, and it is indeed the
reason it was introduced in \cite{Chen-Fu-2017}. 
Roughly speaking, a grammatical labeling is to associate
a combinatorial structure with labels of a grammar such that
the recursive construction of the
combinatorial structure is consistent with
the production rules of the grammar. In this
way, the grammar replaces the usual recurrence relation in describing the involved combinatorial numbers, see also \cite{Hao-Wang-Yang-2015, 
Ma-2013,  Ma-2013-b,  Zhou-Yeh-Ren-2022,  Zhu-Yeh-Lu-2019}
for more examples of grammatical labelings.

Given the context-free property \eqref{prop:context-free} of a 
grammar, the operator $D$ satisfies the Leibniz formula 
\begin{align}
\label{Leibniz}
    D^n(uv) = \sum_{k=0}^n \binom{n}{k}
    D^k(u)\, D^{n-k}(v). 
\end{align}
In particular,   
\begin{align}
D(u^{-1}) = - u^{-2} \,D(u). 
\end{align}
For example, in the case $D(x)=x$,   we have
\begin{align}
\label{x-1}
D(x^{-1}) = - x^{-1}. 
\end{align}

Via examples, we will 
give an outline of the grammatical calculus, involving
set partitions, the Bell numbers,  the
Eulerian polynomials, the peak polynomials of permutations, 
up-down run polynomials,  
the Diaconis-Evans-Graham theorem, 
the operator expansion of
$(g(x)D)^nf(x)$,  the Ramanujan polynomials, 
plane trees, the ballot numbers and the Narayana numbers.

\section{The Umbral Calculus and the Grammatical Calculus}

The simplest example of the umbral calculus is
the following argument of Rota for the binomial inversion:
The relation
\begin{align} \label{inversion-1}
    a_n = \sum_{k=0}^n \binom{n}{k} b_k 
\end{align} 
holds for all $n$ if and only if 
\begin{align} \label{inversion-2}
b_n = \sum_{k=0}^n (-1)^{n-k}
\binom{n}{k} a_k
\end{align}
holds for all $n$.
The idea of the classical symbolic 
method is to treat $a_n$ as $a^n$. 
In doing so, (\ref{inversion-1}) 
can be written as $a^n=(b+1)^n$. 
On the other hand,  $b_n$ can be written as
$(b+1-1)^n$.
Thus (\ref{inversion-2}) follows from the
binomial theorem.
Rota \cite{Rota-1975} defined the linear
functional
$L(x^n)=b_n$, so that (\ref{inversion-1}) takes the form
$a_n=L((x+1)^n)$. Now we have
$b_n=L(((x+1)-1)^n)$, and so the binomial expansion
yields (\ref{inversion-2}). 
This explains why one can get valid results while
treating $a_n$ as $a^n$. 

Let us take the above example to illustrate the
idea of the grammatical calculus. Define
\[ x \rightarrow x,  \quad 
b_i \rightarrow b_{i+1} \;(i \geq 0). \]
Let $D$ be the formal derivative of the above grammar; i.e., 
$$
D(x)=x, \; D^i(b_0)=b_i \; (i \ge 0).
$$
By the Leibniz formula \eqref{Leibniz},
  (\ref{inversion-1}) can be expressed as
 $a_n=x^{-1}D^n(b_0x)$. Note that
\begin{align*}
    b_n=D^n(b_0) =D^n(b_0x x^{-1}).
\end{align*}
Since $D(x^{-1})=-x^{-1}$, applying the Leibniz
formula gives  (\ref{inversion-2}).

The second example is concerned with the following identity 
\begin{align} \label{df}
    (2n)!! = \sum_{k=0}^n \binom{n}{k}
    (2k-1)!!\,  (2(n-k)-1)!!,  
\end{align}
where $(2k-1)!!= 1\cdot 3 \cdot  \cdots \cdot
(2k-1)$,   $(2n)!!= 2\cdot 4 \cdot \cdots
\cdot (2n)$,     $(-1)!!$ and $0!!$
are defined to be 1,  
see Triana-Castro  \cite{Triana-2019}.

Let $D$ be the formal
derivative of the grammar $a\rightarrow a^3$.
For $n\geq 0$, it is clear that
\[ D^n(a) = (2n-1)!! \,  a^{2n+1},   \quad 
   D^n(a^2) = (2n)!!\,   a^{2n+2}. \]
Expanding $D^n(a^2)$ using 
the Leibniz formula, we obtain (\ref{df}), 
which is equivalent to the classical identity:
\[ \sum_{k=0}^n \binom{2k}{k} \binom{2n-2k}{n-k} =4^n. \]

The third example is the formula for
the higher order derivatives 
of a composite function $f(g(x))$,  that is, the 
Fa\`a di Bruno formula. 
Let $f_i=f^{(i)}(g(x))$ and $g_i=g^{(i)}(x)$. Then we have
\begin{align}
    (f(g(x)))^{(n)}= \sum_{k=0}^n f_k 
    \sum_{k_1,   \,   k_2,   \,   \ldots,   \,   k_n \ge 0}
    \frac{n!}{k_1! k_2! \cdots k_n! \,   1!^{k_1}
    2!^{k_2} \cdots n! ^{k_n}} g_1^{k_1} g_2^{k_2} \cdots g_n^{k_n},  
     \end{align}
     where the second sum ranges over
     $k_1+k_2+\cdots+k_n=k$ and $k_1+2k_2+\cdots + nk_n=n$.
The coefficients in the above expansion
have a combinatorial interpretation. A partition 
of a set $[n]$ can be written as
$\{B_1,B_2,\dots,B_k\}$, where the $B_i$ are disjoint nonempty
subsets of $[n]$ such that $B_1 \cup \dots \cup B_k = [n]$,
and each $B_i$ is called a block. 
The type of a partition $\pi$ is denoted by
$1^{k_1}2^{k_2}\cdots n ^{k_n}$, where $k_i$
is the number of blocks of $\pi$ with 
$i$ elements.  The coefficient in the Fa\`a di Bruno
formula equals the number of partitions of $[n]$ having type $1^{k_1}2^{k_2}\cdots n ^{k_n}$. 
The classical proof as given in
\cite{Riordan-1958} is a paradigm of the
symbolic method. Below is an argument
using the grammar
\begin{align}
\label{gram-bruno}
    f_i \rightarrow f_{i+1}\,g_1 \;( i \geq 0) ,   \quad  g_i 
\rightarrow g_{i+1} \; ( i \geq 1).
\end{align} 
Here are some computations associated with this grammar:
\begin{align*}
    D(f_0) &= f_1g_1, \\[3pt]
    D^2(f_0) &  = D(f_1)\,  g_1 + f_1 \,   D(g_1) = 
    f_2\,   g_1^2 + f_1\,    g_2,   \\[3pt]
   D^3(f_0)  &= f_3 \,    g_1^3 + 3 \,   f_2 \,   
    g_1 \,    g_2
    + f_1 \,    g_3.
\end{align*}
 In general, 
  $D^n(f_0)$ agrees with the
  formula of Fa\`a di Bruno. We now explain this formula by means of a grammatical labeling.
  Given a partition $\sigma$
  of $[n]$ with $k$ blocks, 
    we use $f_k$ to label the whole
    partition and use $g_i$ to label a block
    with $i$ elements, so that the
    weight of $\sigma$ equals
$f_k g_1^{k_1} g_2^{k_2} \cdots g_n^{k_n}$,
where $k_i$ denotes the number of 
blocks with $i$ elements. The grammar
 \eqref{gram-bruno} reflects the 
 two cases of constructing a partition
 of $[n+1]$ from a partition of 
$[n]$: (1) Insert $n+1$ as a new block.
This operation is captured by the production rule
$f_k\rightarrow f_{k+1}g_1$. (2) Insert 
$n+1$ into a block containing $i$ 
elements. In this case, we have
the production rule $g_i\rightarrow
g_{i+1}$. 

Using a simplified version of the above 
grammar, that is,  \begin{align}\label{ax}
   G= \{a \rightarrow a x,  \; x \rightarrow x\}, 
\end{align}  
we can
generate the Stirling numbers of the second kind  
and the Bell numbers.
To be more specific,
for $1 \leq k \leq n$,  the Stirling number
of the second kind $S(n,  k)$ refers to the number of partitions
of $[n]$ with $k$ blocks, and  the Bell number
$B_n$ refers to the number
of partitions of $[n]$.

\begin{theorem}[Chen \cite{Chen-1993}]
Let $D$ be the formal derivative of the
grammar (\ref{ax}). For $n\geq 1$, we have
\begin{align} \label{S}
    D^n(a)& = a \sum_{k=1}^n S(n,  k)\, x^k,   \\[6pt]
    B_n  & = a^{-1} D^n(a) \big| _{ x=1}. \label{B}
\end{align}
\end{theorem}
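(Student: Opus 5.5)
The approach is to prove \eqref{S} by induction on $n$, comparing the action of $D$ with the recurrence $S(n+1,k)=S(n,k-1)+k\,S(n,k)$. Alternatively, one can use a grammatical labeling in the spirit of the Fa\`a di Bruno discussion above, and I will present both views as one argument. For the base case, $D(a)=ax$, which agrees with $a\,S(1,1)x$.

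For the inductive step, assume $D^n(a)=a\sum_{k=1}^n S(n,k)x^k$. By the context-free property \eqref{prop:context-free}, together with $D(x)=x$, we get $D(x^k)=kx^k$. Hence
\[
D(a x^k)=D(a)\,x^k+a\,D(x^k)=a x^{k+1}+k\,a x^k .
\]
Summing over $k$ gives
\[
D^{n+1}(a)=a\sum_{k}\bigl(S(n,k-1)+k\,S(n,k)\bigr)x^k=a\sum_{k=1}^{n+1}S(n+1,k)\,x^k .
\]
This uses the standard recurrence for $S(n,k)$, with the conventions $S(n,0)=0$ for $n\ge1$ and $S(n,n+1)=0$.

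To make the argument self-contained, I would justify that recurrence combinatorially via a labeling. Label a partition of $[n]$ with $k$ blocks by $a$ for the whole partition and $x$ for each block, so its weight is $ax^k$. A partition of $[n+1]$ arises from one of $[n]$ in one of two ways. Either $n+1$ forms a new singleton block, which corresponds to the rule $a\to ax$. Or $n+1$ is inserted into one of the $k$ existing blocks, which corresponds to applying $x\to x$ at the chosen block; the Leibniz rule sums over the $k$ choices. Every partition of $[n+1]$ is obtained exactly once, by deleting $n+1$. Therefore $D$ applied to the total weight of partitions of $[n]$ yields the total weight of partitions of $[n+1]$, and \eqref{S} follows.

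Formula \eqref{B} is then immediate: divide \eqref{S} by $a$ and set $x=1$ to obtain $\sum_k S(n,k)=B_n$. No step here is a real obstacle. The only point needing care is the bookkeeping showing that $D(x^k)=kx^k$ counts the $k$ insertion positions, which follows from the Leibniz formula \eqref{Leibniz} applied to a product of $k$ copies of $x$.
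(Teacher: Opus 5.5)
Your proposal is correct and follows the paper's route: the paper obtains this theorem by specializing the Fa\`a di Bruno grammatical labeling of set partitions via $f_i\mapsto a$, $g_i\mapsto x$. In that labeling $a$ marks the whole partition and $x$ marks each block; a new singleton block gives $a\to ax$, and insertion into an existing block gives $x\to x$. This is exactly your labeling argument, and your induction via $D(ax^k)=ax^{k+1}+k\,ax^k$ and $S(n+1,k)=S(n,k-1)+k\,S(n,k)$ is the same bookkeeping written as a recurrence.
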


In the following two examples, we illustrate
how the grammatical calculus can be employed to
prove identities involving the Bell numbers.
Spivey \cite{Spivey-2008} obtained the following
identity
\[ B_{n+m}= \sum_{k=0}^n \sum_{j=0}^m
 j^{n-k} S(m,  j)\binom{n}{k}B_k. \]
Using the grammar (\ref{ax}),   we see that for $i\geq 0$,  
$D^i (x^j)= j^i  x^j$. Thus, 
\[ D^{n+m}(a) = D^n(D^m(a)) = D^n
\left( \sum_{j=0}^m S(m,  j) ax^j \right) = 
\sum_{j=0}^m S(m,  j)D^n(ax^j).\] 
Applying  the Leibniz formula to $D^n(ax^j)$ and
setting $a=x=1$, we arrive at 
Spivey's identity. 
A refinement of this identity goes back to  Verde-Star \cite{Verde-Star-1988}, and a grammatical derivation
can be found in \cite{Chen-1993}. 
For generalizations and applications of this
relation, see  \cite{Gould-2008, Shattuck-2016, Xu-2012}.

 Shattuck \cite{Shattuck-2015} obtained the following
identity involving the Bell numbers. For $0 \leq j \leq n$,    
\begin{align}
\label{eq:Bell}
\sum_{i=0}^{j}(-1)^{i}\binom{j}{i} B_{n+1-i}=\sum_{k=0}^{n-j}\binom{n-j}{k} B_{n-k}.  
\end{align}
Using the grammar
  (\ref{ax}), according to (\ref{x-1}),  the sum
\[ \sum_{i=0}^{j}(-1)^{i} x^{-1}\binom{j}{i} D^{n+1-i}(a), \]
can be expressed as
\[ \sum_{i=0}^{j}\binom{j}{i} D^{i}\left(x^{-1}\right) D^{j-i}\left(D^{n+1-j}(a)\right) . \]
By the Leibniz formula \eqref{Leibniz}, this equals
\[ D^{j}\left(x^{-1} D^{n+1-j}(a)\right). \]
Since $D(a)=a x$ and $D(x)=x$, we further get
\begin{align*}
D^{j}\left(x^{-1} D^{n-j}(a x)\right) & =D^{j}\left(x^{-1} \sum_{k=0}^{n-j}\binom{n-j}{k} D^{k}(x) \,D^{n-j-k}(a)\right) \\[6pt]
& =\sum_{k=0}^{n-j}\binom{n-j}{k} D^{n-k}(a).
\end{align*}
Hence we obtain
\begin{equation}\label{spivey}
\sum_{i=0}^{j}(-1)^{i} x^{-1}\binom{j}{i} D^{n+1-i}(a)=\sum_{k=0}^{n-j}\binom{n-j}{k} D^{n-k}(a).
\end{equation}
Setting $a=x=1$, we are led to \eqref{eq:Bell}. 

For $n\geq 1$, the exponential polynomials are defined by
\[ \phi_n(x) = \sum_{k=1}^n S(n,  k)\,x^k,\]
which permits a grammatical representation
\[ \phi_n(x) = a^{-1}D^n(a). \]
Therefore, \eqref{spivey} can be written as
\begin{equation}\label{eq:Exp}
\sum_{i=0}^j (-1)^i \binom{j}{i} \phi_{n+1-i}(x) = x \sum_{k=0}^{n-j}
\binom{n-j}{k} \phi_{n-k}(x). 
\end{equation}
The involution for (\ref{eq:Bell})
given in \cite{Shattuck-2015} also applies to the 
  relation (\ref{eq:Exp})
  for the exponential polynomials.

\section{Grammatical Labelings}

In this section, we give three examples
to demonstrate the notion of a grammatical
labeling. In fact, we have already encountered 
a grammatical labeling of partitions related to
the Fa\`a di Bruno formula. 
Roughly speaking, 
a grammatical labeling is introduced to facilitate
the conversion of a recursive
construction of a combinatorial structure into a 
context-free grammar, rather than using recurrence relations. 
Then this grammar itself is sufficient to
perform the grammatical calculus 
for the purpose of enumeration.

The first example is a grammatical
labeling for the cycle representation of a permutation.
Let $n \geq 1$ and $\sigma$ be a permutation of $[n]$. 
For 
$n\geq 2$, it is known that
the number of permutations of $[n]$ such that
$1$ and $2$ belong to the same cycle
equals the number of permutations such that
$1$ and $2$ occur in different cycles, see
 \cite{Pozdnyakov-Steele}. In the context of a
 grammatical labeling, this fact becomes obvious.
 Assume that we always place the minimum element of 
 a cycle at the beginning, and label the position 
 after each element by $y$, which indicates this
 is a valid position to insert a new element in the
 recursive construction of a permutation  of $[n+1]$ 
 (in the cycle
 notation) from a permutation  of $[n]$ (in the cycle
 notation).
 In addition, we place a label $a$ at the end outside the cycles to signify a position to
 insert $n+1$ as a new cycle. For instance, here is the labeling
 of  the cyclic representation of a permutation:
\begin{align*}
    (1\, y \,7 \,y \,3)\;(2\, y \,4\,y)\;(5\,y\, 6\,y)\;a.
\end{align*}
 Then the recursive construction of permutations yields the
 following grammar:
\begin{align}
\label{gram:cycle}
a\rightarrow a y, \quad y \rightarrow y^2 .
\end{align} 
When $n\geq 2$, 
the total weight of permutations such that
$1$ and $2$ belong to the same cycle 
is given by $D^{n-2}(a y^2)$. Likewise,
the total weight of permutations such that
$1$ and $2$ occur in different cycles is also given by
$D^{n-2}(a y^2)$, as expected.

If we take the number of cycles into account,
we may use $x$ to label a cycle. Consequently,
we come to the grammar
\begin{align} 
 \label{axy}   
a\rightarrow a  x  y, \quad y \rightarrow y^2 .
\end{align}
Let $c(n,k)$ denote the signless Stirling number 
of the first kind, i.e., the
number of permutations of $[n]$ with
$k$ cycles,  and let $D$ be the formal 
derivative of the above grammar (\ref{axy}).
Then for $n\geq 1$, 
\begin{align} 
D^n(a) =a y^n\sum_{k=1}^n c(n,k)\,  x^k. 
\end{align}
   Ma \cite{Ma-2013-b} found the following
   grammar for $c(n,k)$,
\begin{align}
\label{grammar-cnk-2}
a\rightarrow ab, \quad
b \rightarrow bc, \quad c \rightarrow c^2.
\end{align}
It has been shown that for $n \ge 1$, 
\begin{align*}
    D^n(a) =a \sum_{k=1}^n c(n,k) b^kc^{n-k}.
\end{align*}
Setting $b=xy$, $c=y$, 
the grammar (\ref{grammar-cnk-2}) gives rise
to the grammar (\ref{axy}).

There is a simple operation of breaking cycles 
which explains the equality
between the number of permutations
with $1$ and $2$ in the same cycle and the
number of permutations with $1$ and $2$ in
different cycles. 
This operation has been employed by Chen \cite{Chen-2024}
to give a combinatorial interpretation 
of the fact that for $n\geq 1$,
the number of permutations of 
$[2n]$ all of whose cycles have odd length equals the
number of permutations of $[2n]$ all of whose cycles have even length. Using the same grammatical 
reasoning, we may deduce that for $n\geq r \geq 2$, 
 the number of 
 permutations of $[n]$ with $1$, $2$, $\ldots$,  $r$
 in the same cycle equals $(r-1)!$ times 
 the number of permutations of $[n]$ with
 $1$, $2$, $\dots$,   $r$ in different cycles. These two numbers are \(n!/r\) and \(n!/r!\), respectively.
 This number can be obtained via a 
 grammatical argument. 
 As for the grammar \eqref{gram:cycle}, we see that
 \[ D(ay^i)=(i+1)ay^{i+1},\]
 and so
 \[D^{n-r}(ay^r) \mid_{a=y=1}=(r+1)(r+2)\cdots n.\] 
In view of the grammar (\ref{axy}), we may
treat $x$ as a constant. Thus, for $i\geq 0$, we have
\[ D(ay^i)=(x+i)ay^{i+1}.\]
It follows that for $n\geq 1$,  
\[ \sum_{k=1}^n c(n,k) x^k = x(x+1) \cdots (x+n-1). \]

The second example is a grammatical labeling
for the bivariate Eulerian polynomials \eqref{eq:A_n}, which can be defined by a differential operator or a grammar.
Assume that $n\geq 1$. For a permutation
$\sigma=
\sigma_1\sigma_2\cdots \sigma_n$ of $[n]$, 
we adopt the convention that
$\sigma_0=\sigma_{n+1}=0$.  For $0 \leq i \leq n-1$,  
if $i$ is an ascent, that is, $\sigma_i< \sigma_{i+1}$, 
then place a label $x$ between $\sigma_i$ and $\sigma_{i+1}$;
otherwise, place  a label $y$ instead. When
the element $n+1$ is inserted at the
position between $\sigma_{i}$  and $\sigma_{i+1}$, the
change of labels is captured by the production
rules $x\rightarrow xy$ and $ y \rightarrow xy$. 

For example,   let $n = 9$ and $\sigma = 8\; 4\; 9\; 6\; 1\; 2\; 5\; 3\; 7$. The labeling of $\sigma$ is shown below: 
\begin{align}
\label{x8}
x\;8\; y\;4 \;x\;9\;y\; 6\;y\; 1\;x\; 2 \;x\;5 \;y\;3 \; x\;7 \; y.
\end{align}

The bivariate Eulerian polynomials
can also be defined on increasing
binary trees. For the correspondence
between permutations and increasing binary trees, 
see Stanley \cite{Stanley-I}. 

To assign labels to increasing binary trees, we need
to adopt the version of complete increasing binary trees.
To be more specific, a complete
increasing binary tree is generated by
attaching leaves to make the increasing binary tree
complete.  The internal vertices form an increasing
binary tree, and a left leaf is labeled by $x$
and a right leaf is labeled by $y$. The labels are
assigned only to the leaves, see Figure \ref{fig1}.  

\begin{figure}[!ht]
\begin{center}
\begin{tikzpicture}[scale=0.7]
\node [tn,  label=90:$1$]{}[grow=down]
	[sibling distance=36mm,  level distance=14mm]
    child {node [tn,  label=180:{$4$}](four){}
       [sibling distance=25mm,  level distance=14mm]
    child {node [tn,  label=180:{$8$}](eight){}
     [sibling distance=18mm,  level distance=14mm]
     child {node [tn1,  label=-90:{}](eightl){}}
     child {node [tn1,  label=0:{}](eightr){}}
     }
      child {node [tn,  label=0:{$6$}](six){}
     [sibling distance=13mm,  level distance=14mm]
     child {node [tn,  label=0:{$9$}](nine){}
     child {node [tn1,   label=-90:{}](ninel){}}
     child {node [tn1,   label=-90:{}](niner){}}
     }
     child {node [tn1,  label=0:{}](sixr){}}
     }
     }
     child {node [tn,  label=0:{$2$}](twoa){}
     [sibling distance=25mm,  level distance=14mm]
      child {node [tn1,  label=-90:{$x$}](twoal){}}
    child {node [tn,  label=0:{$3$}](threea){}
       [sibling distance=18mm,  level distance=14mm]
    child {node [tn,  label=180:{$5$}](five){}
       [sibling distance=13mm,  level distance=14mm]
      child {node [tn1,  label=-90:{}](fivel){}}
    child {node [tn1,  label=-90:{}](fiver){}}
    }
    child {node [tn,  label=0:{$7$}](sevena){}
         [sibling distance=13mm,  level distance=14mm]
    child {node [tn1,  label=-90:{}](sevenl){}}
    child {node [tn1,  label=-90:{$y$}](sevenr){}}
    }
     }};
    \node [below=3pt] at (eightl){$x$}; 
     \node [below=3pt] at (eightr){$y$}; 
     \node [below=3pt] at (sixr){$y$};
     \node [below=3pt] at (ninel){$x$};
     \node [below=3pt] at (niner){$y$};
     \node [below=3pt] at (fivel){$x$}; 
     \node [below=3pt] at (fiver){$y$};
    \node [below=3pt] at (sevenl){$x$};
    \node [below=3pt] at (sevenr){$y$};
\end{tikzpicture}
\end{center}
\caption{A complete increasing binary tree with \((x,y)\)-labels.}
\label{fig1}
\end{figure}

The third example is connected with $m$-regular partitions.  
A partition of $[n]$ is said to be $m$-regular
if any two elements in the same block differ by at least
$m$, that is, the absolute difference of any two elements
in the same block is at least $m$. 
This notion arises in computational biology. In particular, the number of \(2\)-regular partitions of
\([n]\) is the Bell number \(B_{n-1}\), where \(B_{n}\) is 
the number of partitions of $[n]$.
Yang \cite{Yang} showed that
the number of $m$-regular partitions of
$[n]$ equals the number of $(m-1)$-regular partitions
of $[n-1]$.   Chen-Deng-Du \cite{Chen-Deng-Du-2005} presented a bijective algorithm to reduce an $m$-regular partition
to an $(m-1)$-regular partition and showed that this reduction
preserves the noncrossing property.

From the perspective of grammatical calculus,
the generation of $m$-regular partitions and partitions without restrictions are governed by the
same grammar.
First, notice that in an $m$-regular partition of $[n]$, where
$n \geq m$, the largest $m-1$ elements 
$n-m+2,   n-m+3,   \ldots,   n$ are scattered in
different blocks. These  
$m-1$ blocks are not labeled, whereas 
the remaining blocks are labeled by $x$.
 Now, we cannot insert $n+1$ into any 
 unlabeled block, but we can insert $n+1$ into 
 any block labeled by $x$. In addition, we
 label the whole partition by $a$.  
Let us examine how the labels change
after the insertion of the
element $n+1$. 

There are two cases. Case 1. When $n+1$ forms a new block, the 
block containing $n-m+2$ gets a label $x$,  since 
the insertion of  $n+2$  
into this block still yields an $m$-regular partition. Consequently, the new block consisting of $n+1$ has no label. 
Case 2. When $n+1$ is inserted into a block
labeled by $x$, the label \(x\) is removed from the resulting block, but the block containing $n-m+2$
will get a label $x$. This labeling
scheme gives a recursive construction
of $m$-regular partitions, along with a grammar
\begin{align}\label{axm}
    a \rightarrow a x,   \; x \rightarrow x. 
\end{align}   
Indeed, this is just the grammar to generate
ordinary partitions.

\begin{theorem}
Assume that $m\geq 1$ and  $n\geq m$. Let
$D$ be the formal derivative of the grammar (\ref{axm}). 
Then the total weight of $m$-regular 
partitions of $[n]$ equals $D^{n-m}(ax)$. 
\end{theorem}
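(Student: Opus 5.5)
The proof is an induction on $n$, starting at $n=m$, using the labeling scheme described just before the statement. Throughout, the weight of an $m$-regular partition of $[n]$ is $a x^{j}$, where $j$ is the number of blocks labeled by $x$, i.e., the number of blocks containing none of the $m-1$ largest elements $n-m+2,\ldots,n$. The total weight is the sum of these over all $m$-regular partitions of $[n]$. We must show it equals $D^{n-m}(ax)$.

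\emph{Base case $n=m$.} An $m$-regular partition of $[m]$ must have all $m$ elements in distinct blocks, since any two elements of $[m]$ differ by at most $m-1$. So there is exactly one such partition, $\{\{1\},\{2\},\ldots,\{m\}\}$. The elements $2,\ldots,m$ lie in $m-1$ distinct unlabeled blocks, and the block $\{1\}$ carries the label $x$. The weight is therefore $ax=D^0(ax)$. For $m=1$ there are no unlabeled blocks, every block is labeled $x$, and the claim reduces to the first theorem above, (\ref{S}), after a shift.

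\emph{Inductive step.} The key structural fact is that every $m$-regular partition of $[n+1]$ arises uniquely from an $m$-regular partition $\pi$ of $[n]$ by inserting $n+1$. Indeed, deleting $n+1$ preserves $m$-regularity. Conversely, $n+1$ may be placed in a block $B$ of $\pi$ exactly when $\max B\le n+1-m$, that is, when $B$ contains none of $n-m+2,\ldots,n$; these are precisely the blocks labeled by $x$. The element $n+1$ may also always form a new singleton block.

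Next I check that the labels of the new partition are obtained from those of $\pi$ by one production rule. The new set of largest elements is $\{n-m+3,\ldots,n+1\}$, so the block containing $n-m+2$ loses its restriction and becomes labeled by $x$. (This step needs $m\ge 2$; for $m=1$ the situation is the ordinary partition case.)
\begin{itemize}
\item If $n+1$ forms a new block, that block is unlabeled and the freed block gains an $x$. The weight changes from $ax^j$ to $ax^{j+1}$, which matches the rule $a\to ax$.
\item If $n+1$ joins one of the $j$ labeled blocks, that block loses its $x$ and the freed block gains one. The weight stays $ax^j$, which matches applying $x\to x$ to the chosen $x$.
\end{itemize}
Summing over the choices gives weight $ax^{j+1}+j\,ax^j=D(ax^j)$. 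Hence the total weight for $[n+1]$ is $D$ applied to the total weight for $[n]$, which by the induction hypothesis is $D^{n+1-m}(ax)$.

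\emph{Main obstacle.} The delicate point is making sure the freed block containing $n-m+2$ is distinct from the block receiving $n+1$, and that it was unlabeled before. Both hold because that block contains $n-m+2$, which is one of the $m-1$ largest elements of $[n]$, so its block was unlabeled and could not receive $n+1$. Once this is checked, each $x$-label corresponds bijectively to an admissible insertion, and the induction goes through.
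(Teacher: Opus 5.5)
Your proof is correct and follows the paper's argument. You use the same labeling: the $m-1$ blocks containing $n-m+2,\ldots,n$ are unlabeled, every other block gets $x$, and the whole partition gets $a$. The same two insertion cases then give the rules $a\to ax$ and $x\to x$; you simply make the induction from the base case $n=m$ explicit and treat $m=1$ separately. The extra details you check are exactly what the paper leaves implicit. These are that the block of $n-m+2$ was unlabeled and cannot receive $n+1$, and (tacitly, by $m$-regularity) that it contains none of $n-m+3,\ldots,n+1$, so it does become labeled $x$.
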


Comparing the procedures to generate
$m$-regular partitions and $(m-1)$-regular partitions,   
we are led to a bijection, called a 
grammar assisted bijection,
which turns out to be  equivalent  to the 
reduction algorithm of 
Chen-Deng-Du \cite{Chen-Deng-Du-2005}. 
It appears that the grammar setting of the bijection 
is easier to justify.

\section{Eulerian Polynomials and the Grammatical Calculus}

The definition of the bivariate Eulerian
polynomials was given in \eqref{defi:Eulerianpol} in the Introduction. Setting 
$y=1$, $A_n(x,y)$ becomes the 
Eulerian polynomial $A_n(x)$. Eulerian polynomials
are the generating functions of permutations 
with respect to the
numbers of ascents (or descents); see the labeling in the example \eqref{x8}. 
It should be noted that 
they differ from the Euler numbers and the
Euler polynomials. 

There is a classical theorem of
Foata-Sch\"utzenberger \cite{Foata70} on the
Eulerian polynomials:  
For $n \geq 1$,    $A_n(x)$ can be uniquely 
expanded as 
\begin{align}
A_n(x) = \sum_{k=1}^{\lfloor (n+1)/2 \rfloor} \gamma_{n,  k} \,x^k (1+x)^{n-2k+1}, 
\label{A_n_Gamma}
\end{align}
where $\gamma_{n,  k}$ are nonnegative integers. 
The expression  (\ref{A_n_Gamma}) is called the 
 $\gamma$-expansion of $A_n(x)$. It can be reformulated in
 the bivariate form:
\begin{align}
A_n(x,  y) = \sum_{k=1}^{\lfloor (n+1)/2 \rfloor} \gamma_{n,  k}\, (xy)^k (x+y)^{n-2k+1}.
\end{align}
The coefficients
$\gamma_{n,  k}$ are called the $\gamma$-coefficients.
The nonnegativity of these coefficients
is called the $\gamma$-positivity. 

Ma-Ma-Yeh \cite{Ma-Ma-Yeh} 
gave a grammatical derivation of the
$\gamma$-positivity of $A_n(x,  y)$. 
Given the grammar (\ref{ge}), namely,
$G= \{x \to xy,\;  y\to xy\}$, 
make the following change of variables
$$u = xy,   \quad v = x + y.$$
Clearly,
\begin{align*}
D(u) &= D(xy) = xy^2 + x^2y = uv,  \\
D(v) &= D(x + y) = 2xy = 2u.  
\end{align*}
Thus $G$ takes the following form in the variables
$u$ and $v$: 
$$H = \{u \to uv,   v \to 2u\}.$$
For $n\ge1$,   we have
\[ A_n(x,  y)= D_G^n(y)=D_H^{n-1}(u) \mid_{u=xy,\, v=x+y},\] 
where $D_G$ and $D_H$ are the formal
derivatives of $G$ and $H$, respectively.
Now the $\gamma$-positivity of $A_n(x,y)$ becomes
transparent. Moreover, the known combinatorial
interpretations  of the $\gamma$-coefficients
in terms of permutations and binary trees can also
be deduced from the grammar.

Euler obtained the following
identity for $A_n(x)$, see \cite{Hyatt-2016}. Here we 
give a derivation using the grammatical calculus, see
Chen-Fu \cite{Chen-Fu-2017}. 
 
\begin{theorem}  
For $n \ge 1$,   
\begin{align}
A_n(x) = \sum_{k=0}^{n-1} \binom{n}{k} A_k(x)(x-1)^{n-1-k},   \label{Eq:Eulerian}
\end{align}
where $A_0(x) = x$. 
\end{theorem}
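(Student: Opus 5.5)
The plan is to work with the grammar \eqref{ge}, $x\to xy$, $y\to xy$, and its formal derivative $D$, and to prove the homogeneous identity
\[
A_n(x,y)= y\sum_{k=0}^{n-1}\binom{n}{k}A_k(x,y)\,(x-y)^{n-1-k},\qquad A_0(x,y):=x .
\]
Setting $y=1$ then gives \eqref{Eq:Eulerian}. The normalization $A_0=x$ in the statement suggests generating from $x$ rather than $y$. Since $D(x)=D(y)=xy$, we have $D^n(x)=D^n(y)=A_n(x,y)$ for $n\ge 1$, while $D^0(x)=x$. Hence $A_k(x,y)=D^k(x)$ for all $k\ge 0$ under this convention.

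The key observation is that $c:=x-y$ is a constant for $D$, because $D(x-y)=xy-xy=0$. Hence $D(x)=x(x-c)$ and $y=x-c$. Mimicking the binomial-inversion and Shattuck examples of Section~2, I will use the inverse $w=x^{-1}$. The rule $D(u^{-1})=-u^{-2}D(u)$ gives
\[
D(w)=-\frac{x(x-c)}{x^2}=-1+cw .
\]
Since $c$ is constant, this is a linear first-order relation, so by induction $D^j(w)=c^{\,j-1}(cw-1)$ for $j\ge1$. Using $cw-1=(c-x)/x=-y/x$, this becomes $D^j(w)=-c^{\,j-1}\,y/x$.

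Next, apply the Leibniz formula \eqref{Leibniz} to $D^n(x\cdot x^{-1})=D^n(1)=0$ for $n\ge1$:
\[
0=\sum_{k=0}^{n}\binom{n}{k}D^k(x)\,D^{n-k}(x^{-1})
=\frac{D^n(x)}{x}-\frac{y}{x}\sum_{k=0}^{n-1}\binom{n}{k}D^k(x)\,c^{\,n-1-k}.
\]
Multiplying by $x$ and substituting $c=x-y$ and $D^k(x)=A_k(x,y)$ gives the homogeneous identity. Specializing $y=1$ then yields \eqref{Eq:Eulerian}.

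The only delicate point is choosing the right auxiliary element. The approach works because $x-y$ is annihilated by $D$, so $D^j(x^{-1})$ collapses to a single term. I expect the main care to lie in the bookkeeping of the $k=n$ term versus the others, and in checking the convention $A_0=x$ (rather than $y$) against the case $n=1$: the identity gives $A_1=y\cdot x$, which is correct.
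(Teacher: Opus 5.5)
Your proof is correct and is essentially the paper's argument: both use the grammar $x\to xy$, $y\to xy$, the fact that $D(x-y)=0$, and the Leibniz formula applied to a factorization through $x\cdot x^{-1}$. The only difference is that you expand $D^n(x\cdot x^{-1})=0$ with $D^j(x^{-1})=-x^{-1}y(x-y)^{j-1}$. The paper instead expands $D^n(y)=D^n(x\cdot x^{-1}y)$ with $D^j(x^{-1}y)=x^{-1}y(x-y)^j$, then uses $D^n(x)=D^n(y)$ to move the $k=n$ term to the left and cancels the factor $(x-y)x^{-1}$; your variant isolates $D^n(x)$ directly with the unit coefficient $x^{-1}$ and so avoids that cancellation.
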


\begin{proof}
For the grammar (\ref{ge}),  we have 
$D(x^{-1}) = -x^{-2}D(x) = -x^{-1}y$, and
\begin{align}
D(x^{-1}y) = x^{-1}D(y) + yD(x^{-1}) = x^{-1}y(x-y). \label{2.2}
\end{align}
Since \(D(x-y)=0\),  we find that
\begin{align}
D^n(x^{-1}y) = x^{-1}y(x-y)^n. \label{2.3}
\end{align}
By the Leibniz formula, for $n \ge 1$,  
\begin{align}
\begin{aligned}
D^n(y) = D^n(xx^{-1}y) &= \sum_{k=0}^n \binom{n}{k} D^k(x)\,D^{n-k}(x^{-1}y) \\
&=\sum_{k=0}^{n-1} \binom{n}{k} D^k(x)\,D^{n-k}(x^{-1}y)+x^{-1}y\, D^n(x). \label{2.4}
\end{aligned}
\end{align}
Plugging (\ref{2.3}) into (\ref{2.4}),  we get
\begin{align*}
(x-y)x^{-1}\,D^n(y) = \sum_{k=0}^{n-1} \binom{n}{k} x^{-1}y\,D^k(x)\,(x-y)^{n-k}.
\end{align*}
Here we have used the fact that for $n\ge 1$, $D^n(x)=D^n(y)$. 
Setting $y = 1$ yields (\ref{Eq:Eulerian}). 
\end{proof}

As remarked by Carlitz-Scoville \cite{Carlitz-Scoville-1974},
it is not so easy to 
recover the generating function
of the Eulerian polynomials 
from the recurrence relation of the
Eulerian numbers. 
The grammatical calculus for this purpose
seems to be a shortcut, see Chen-Fu \cite{Chen-Fu-2022}.

\begin{theorem}  \label{thm-a-n}
 Let $A_0(x,  y)=y$. We have
\begin{align} \label{g-a-x-y}
\sum_{n=0}^\infty A_n(x,  y) \frac{t^n}{n!} = \frac{y-x}{1 - xy^{-1} e^{(y-x)t}}.
\end{align}
\end{theorem}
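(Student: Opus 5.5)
The idea is to realize the generating function as $e^{tD}(y)$ and evaluate it by a conserved quantity, in the spirit of the proof of \eqref{Eq:Eulerian}. By the grammar \eqref{ge}, the formal derivative $D$ is a derivation on rational functions in $x,y$. Hence $\sum_{n\ge0} D^n(y)\,t^n/n!$ is the image of $y$ under the exponential operator $e^{tD}$. I will use the standard fact from the grammatical calculus that $e^{tD}$ is a ring homomorphism: by the Leibniz formula \eqref{Leibniz}, $e^{tD}(uv)=e^{tD}(u)\,e^{tD}(v)$ as formal power series in $t$. It therefore suffices to find $X(t)=e^{tD}(x)$ and $Y(t)=e^{tD}(y)$, which must satisfy $X'=XY$, $Y'=XY$, $X(0)=x$, $Y(0)=y$. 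Here ${}'$ denotes $d/dt$, and the identity $\frac{d}{dt}e^{tD}(f)=e^{tD}(D f)$ is immediate termwise.

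First, since $D(y-x)=0$, we have $e^{tD}(y-x)=y-x$, so $Y-X=y-x$ for all $t$. Second, I will use the quantity $x^{-1}y$. From \eqref{2.2}, $D(x^{-1}y)=x^{-1}y(x-y)$, and iterating gives \eqref{2.3}. Summing, $e^{tD}(x^{-1}y)=x^{-1}y\,e^{(x-y)t}$. By multiplicativity this means $Y/X = x^{-1}y\,e^{(x-y)t}$, i.e. $X = Y\,x y^{-1} e^{(y-x)t}$.

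Combining the two relations, $Y - Y xy^{-1}e^{(y-x)t} = y-x$. Solving gives
\[ Y=\frac{y-x}{1-xy^{-1}e^{(y-x)t}}, \]
and the left side is exactly $\sum_n A_n(x,y)t^n/n!$ by \eqref{eq:A_n} and $A_0=y$.

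The main obstacle is formal rigor rather than computation. The right-hand side has denominator $1-xy^{-1}e^{(y-x)t}$, whose constant term $1-x/y$ is invertible only in rational functions of $x,y$. So I will work over the field $\mathbb{Q}(x,y)$ with $D$ extended to it, as the paper already does with $D(x^{-1})$. I will also justify the multiplicativity of $e^{tD}$, including on inverses such as $x^{-1}$, by applying it to $x\cdot x^{-1}=1$. As a fallback, one can avoid homomorphism language by directly verifying that the claimed right side $F$ satisfies $F(0)=y$ and the Riccati-type equation $F'=F(F-(y-x))$. This equation is what $Y'=XY$ with $X=Y-(y-x)$ becomes, and it determines the coefficients uniquely by recursion.
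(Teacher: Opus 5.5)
Your proof is correct and is essentially the paper's argument. Both rest on three ingredients: the multiplicativity of $\gen$ (your $e^{tD}$), the constancy of $y-x$, and the eigenfunction $xy^{-1}$, for which the paper uses $D^n(xy^{-1})=xy^{-1}(y-x)^n$ and you use the reciprocal $x^{-1}y$ via \eqref{2.3}. The only difference is bookkeeping: the paper sums the series for $\gen(y^{-1},t)$ using $D(y^{-1})=-xy^{-1}$ and then inverts, while you solve the two relations $Y-X=y-x$ and $Y/X=x^{-1}y\,e^{(x-y)t}$ for $Y$.
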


 Below is a grammatical proof of (\ref{g-a-x-y}). 
Let $f$ be a 
Laurent polynomial in $x$ and $y$, 
and $D$ be the formal derivative of
the grammar \eqref{ge}. Define the exponential
generating function of $f$ with respect to $D$ by
\begin{align} \label{Gen-d}
\gen(f,   t) = \sum_{n \geq 0} D^n(f) \frac{t^n}{n!}.
\end{align}
Let $g$ also be a Laurent polynomial in $x$ and $y$. 
By the linearity of $D$ and
the Leibniz formula, the following multiplicative
property holds:
\begin{align} \label{Gen-fg}
\gen(fg,   t) = \gen(f,   t)\, \gen(g,   t).
\end{align}

\begin{proof}
Under the convention $A_0(x,  y)=y$,  for $n \ge 0$,  
we have $A_n(x,   y)=D^n(y)$, 
where $D$ is the formal derivative of  the
grammar  (\ref{ge}).
We aim to 
compute $\gen(y,   t)$.
By the multiplicative property (\ref{Gen-fg}), 
we have
\[ \gen(y, t)=\frac{1}{\gen(y^{-1},t)}. \]
To compute $\gen(y^{-1},t)$, we find that
\begin{align} \label{D-x-1}
D(y^{-1} ) = - y^{-2} D(y) = -x y^{-1}. 
\end{align}
Thus, for  $n \geq 0$,
\begin{align} 
D^n(xy^{-1} ) = x y^{-1} (y-x)^n.\label{D-x-1-y}
\end{align}
 By (\ref{D-x-1}),  we get
\begin{align*}
\gen(y^{-1},  t) = \sum_{n \geq 0} D^n(y^{-1}) \frac{t^n}{n!} = y^{-1} - \sum_{n \geq 1} D^{n-1} (xy^{-1}) \frac{t^n}{n!}.
\end{align*}
Employing (\ref{D-x-1-y}) and the constant property
of $y-x$, we obtain
\begin{align*}
\gen (y^{-1},  t) &= y^{-1} - \sum_{n \geq 1} xy^{-1}(y-x)^{n-1} \frac{t^n}{n!} \\
&= y^{-1} - \frac{xy^{-1}}{y-x} \left( e^{(y-x)t} - 1 \right) \\
&= \frac{1 - xy^{-1} e^{(y-x)t}}{y-x},  
\end{align*}
and thus,
\begin{align} 
\gen(y,   t) =  \frac{y-x}{1 - xy^{-1} e^{(y-x)t}},
\end{align} 
completing the proof. \qed
\end{proof}

Next we consider refinements of Eulerian polynomials. 
A permutation statistic is called Eulerian if it 
is equidistributed with respect to the excedance number \(\exc\) over \(S_n\) for every \(n\). Let \(\sigma = \sigma_1\sigma_2\cdots\sigma_n \in S_n\)
be a permutation of $[n]$, an index $1\le i \le n$ 
such that \(\sigma_i > i\) is called an excedance of 
\(\sigma\) and the number of excedances of 
$\sigma$ is denoted by  \(\exc(\sigma)\).
A Stirling statistic of a permutation 
refers to a statistic that is
equidistributed with respect to the number of cycles.

Examples of Stirling statistics include the
number of left-to-right maxima, the number of 
right-to-left maxima. Let  $\cyc(\sigma)$
denote the number of cycles of $\sigma$.  
A left-to-right maximum (minimum) refers
to an element $\sigma_i$ such that 
for all $j<i$, $\sigma_j < \sigma_i$ ($\sigma_i < \sigma_j$).
Likewise, a right-to-left
maximum (minimum) of $\sigma$ refers to an element  $\sigma_i$
such that for all $j > i$ we have $\sigma_j < \sigma_i$ ($\sigma_j > \sigma_i$).  
Let $\lrmax(\sigma)$, $\lrmin(\sigma)$, $\rlmax(\sigma)$ and  $\rlmin(\sigma)$ denote the numbers of 
left-to-right maxima,
left-to-right minima,
right-to-left maxima,
and right-to-left minima, respectively.   

Foata-Sch\"utzenberger \cite{Foata70} introduced the
following polynomials, involving the
Eulerian-Stirling statistics, which
have been called
the $q$-Eulerian polynomials by Brenti \cite{Brenti-2000}: 
\begin{align}\label{brenti}
A_n(x;q) = \sum_{\sigma \in S_n} x^{\exc(\sigma)} q^{\mathrm{cyc}(\sigma)}. 
\end{align}

Brenti \cite{Brenti-2000} obtained the
generating function of $A_n(x;q)$,  and showed 
that $A_n(x; q)$ is log-concave. 
A grammar to generate 
$A_n(x; q)$ has been given by 
Ma-Ma-Yeh-Zhu \cite{Ma-Ma-Yeh-Zhu}. 

Carlitz-Scoville \cite{Carlitz-Scoville-1974} introduced
the polynomials $A_n(x,   y \mid \alpha,   \beta)$, which
have been called   the  $(\alpha,   \beta)$-Eulerian 
polynomials by Ji \cite{Ji-2025}. 
They are defined by the numbers of descents, ascents,
left-to-right maxima, right-to-left maxima.
More precisely,  $A_0(x,  y\mid \alpha,  \beta) = 1$ and 
for $n\geq 1$,
\begin{align}\label{eq:2.1}
A_n(x,  y\mid \alpha,  \beta)=\sum_{\sigma\in S_{n+1}}x^{\asc(\sigma)-1}y^{\des(\sigma)-1}\alpha^{\lrmax(\sigma)-1}\beta^{\rlmax(\sigma)-1}.  
\end{align}

  By the first fundamental transformation of Foata-Sch\"utzenberger~\cite{Foata70},  it can be seen 
  that the $q$-Eulerian polynomials \eqref{brenti} 
  are a specialization of $A_n(x,   y \mid  \alpha,   \beta)$, 
  as given by
\begin{align*}
A_n(x;q) = A_n(1,   x \mid q,   0).
\end{align*}

 Setting $\alpha=0$ and  $\beta = 1$ in (\ref{eq:2.1}), we recover the bivariate Eulerian polynomials, that is, for
 $n \ge 1$, 
\begin{align}
    A_n(x,  y\mid 0, 1) = \sum_{\sigma\in S_{n} } x^{\asc(\sigma)-1}y^{\des(\sigma)}=x^{-1}A_{n}(x,  y).
\end{align}

Below are the first few values of 
\(A_n(x,  y\mid \alpha,  \beta)\): 
\begin{align*}
A_1(x, y \mid \alpha, \beta) &= \alpha x + \beta y, \\
A_2(x, y \mid \alpha, \beta) &= \alpha^2 x^2  + \alpha xy + \beta xy +    2\alpha\beta xy +  \beta^2 y^2 \\
A_3(x, y \mid \alpha, \beta) &=    \alpha^3 x^3   
 + \alpha x^2 y  +\beta x^2 y  + 3\alpha \beta x^2 y + 
 3\alpha^2 x^2 y  
+   3\alpha^2 \beta x^2 y   \\
&\qquad +{} 
 \alpha xy^2 +\beta xy^2 +
3\alpha\beta  xy^2 + 3\alpha\beta^2 xy^2 + 3\beta^2 xy^2 + \beta^3 y^3.
\end{align*}

Carlitz-Scoville \cite{Carlitz-Scoville-1974}
obtained the generating function of 
\(A_n(x,  y \mid \alpha,  \beta)\). 

\begin{theorem}[Carlitz-Scoville \cite{Carlitz-Scoville-1974}] \label{thm:carbi}
We have
\begin{align}\label{eq:1.7}
\sum_{n\ge 0} A_n(x,  y \mid \alpha,  \beta)\frac{t^n}{n!}
= \bigl(1+xF(x,  y;t)\bigr)^\alpha \bigl(1+yF(x,  y;t)\bigr)^\beta,  
\end{align}
where  
\begin{align}\label{eq:1.8}
F(x,  y;t) = \frac{e^{xt}-e^{yt}}{x e^{yt}-y e^{xt}}.
\end{align}
\end{theorem}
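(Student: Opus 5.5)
The plan is to find a grammar generating $A_n(x,y\mid\alpha,\beta)$ with $\alpha,\beta$ appearing only as exponents of the starting word. Then the multiplicative property (\ref{Gen-fg}) reduces everything to the generating function of Theorem \ref{thm-a-n}. I would extend the Eulerian grammar (\ref{ge}) by two letters:
\[
a\rightarrow ax,\quad b\rightarrow by,\quad x\rightarrow xy,\quad y\rightarrow xy,
\]
with formal derivative $D$. The claim to establish is
\[
D^n(a^\alpha b^\beta)=a^\alpha b^\beta A_n(x,y\mid\alpha,\beta),
\]
where $D(a^\alpha)=\alpha a^\alpha x$ and $D(b^\beta)=\beta b^\beta y$ are taken formally. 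For $n=1$ this gives $\alpha x+\beta y$. For $n=2$ it gives $(\alpha x+\beta y)^2+(\alpha+\beta)xy$, which matches the listed value.

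\emph{Step 1 (combinatorial identification).} I would prove the claim by a grammatical labeling of $\sigma\in S_{n+1}$ with $\sigma_0=\sigma_{n+2}=0$, built recursively by inserting the new maximum. Interior ascent slots get $x$ and interior descent slots get $y$, exactly as in (\ref{x8}). The first slot, which is always an ascent, and the last slot, which is always a descent, are treated separately. The $\alpha$- and $\beta$-weights record left-to-right and right-to-left maxima beyond the trivial one. I must check that inserting the maximum at an end creates a new left-to-right (or right-to-left) maximum and produces exactly the rules $a\rightarrow ax$, $b\rightarrow by$. Interior insertions should follow $x\rightarrow xy$, $y\rightarrow xy$ and leave the maxima counts unchanged, since every element to one side of the new maximum loses its maximum status only if it had it. This step is the main obstacle: the naive end insertion seems to give $a\rightarrow ay$ instead of $a\rightarrow ax$. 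If so, I would either relabel through the reversal-complement symmetry or swap the roles of $x$ and $y$, using the symmetry $A_n(x,y\mid\alpha,\beta)=A_n(y,x\mid\beta,\alpha)$, to land on the stated form. If the combinatorics resists, a fallback is to verify that both sides satisfy the Carlitz--Scoville recurrence, though that defeats the spirit of the method.

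\emph{Step 2 (generating function).} Granting Step 1, (\ref{Gen-fg}) and its extension to formal powers give
\[
\sum_n A_n(x,y\mid\alpha,\beta)\frac{t^n}{n!}=\bigl(a^{-1}\gen(a,t)\bigr)^\alpha\bigl(b^{-1}\gen(b,t)\bigr)^\beta .
\]
Since $D(a)=ax$, differentiating in $t$ gives $\partial_t\gen(a,t)=\gen(a,t)\gen(x,t)$. Hence
\[
a^{-1}\gen(a,t)=\exp\int_0^t\gen(x,s)\,ds,
\]
and similarly $b^{-1}\gen(b,t)=\exp\int_0^t\gen(y,s)\,ds$. By Theorem \ref{thm-a-n}, $\gen(y,t)=(y-x)/(1-xy^{-1}e^{(y-x)t})$. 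Because $D^n(x)=D^n(y)$ for $n\ge1$, we also have $\gen(x,t)=\gen(y,t)+x-y$.

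\emph{Step 3 (integration).} It remains to check two routine identities. First,
\[
1+xF=\frac{(x-y)e^{xt}}{xe^{yt}-ye^{xt}},
\]
and its logarithmic derivative equals $\gen(x,t)$, with value $1$ at $t=0$. Second, the analogous statement holds for $1+yF$ and $\gen(y,t)$. This is a direct differentiation, and it completes (\ref{eq:1.7}).
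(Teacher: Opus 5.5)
\paragraph{The gap is in Step 1.} It is more than the $x$/$y$ mismatch you noticed. Inserting the new maximum $n+2$ into $\sigma\in S_{n+1}$ does not act context-freely on left-to-right and right-to-left maxima. If $n+2$ goes after $\sigma_i$, the result has $\lrmax=\lrmax(\sigma_1\cdots\sigma_i)+1$ and $\rlmax=\rlmax(\sigma_{i+1}\cdots\sigma_{n+1})+1$. These depend on the whole prefix and suffix, not on the label of the slot. For instance, $123\mapsto 1423$ changes $(\lrmax,\rlmax)$ from $(3,1)$ to $(2,2)$. Likewise, $123\mapsto 4123$ collapses $\lrmax$ to $1$ while raising $\rlmax$.

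So your justification that interior insertions leave the maxima counts unchanged is false. The reversal symmetry $A_n(x,y\mid\alpha,\beta)=A_n(y,x\mid\beta,\alpha)$ cannot repair this, because reversal sends maxima to maxima.

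The remedy is to insert a new \emph{minimum}: raise all entries by one and insert $1$. Keep your labeling \eqref{x8}, with $a$ on the first slot and $b$ on the last.
\begin{itemize}
\item At the front, you get $0<1<\sigma_1$. This adds one interior ascent and one new left-to-right maximum, so $a\to\alpha a x$.
\item At the end, you get $\sigma_{n+1}>1>0$. This adds one interior descent and one new right-to-left maximum, so $b\to\beta b y$.
\item In the interior, you get $\sigma_i>1<\sigma_{i+1}$. The slot becomes a descent followed by an ascent, giving $x\to xy$ and $y\to xy$. Neither maxima count changes, since $1$ is never a maximum and never blocks one.
\end{itemize}
This is equivalent to complementing, which is how \eqref{eq:2.2} arises; reversal-complement also works, with $a$ on the last slot. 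Alternatively, invoke Theorem~\ref{thm:Eulerexten} directly. Your grammar with initial word $a^\alpha b^\beta$ is Ji's grammar $E$ after renaming $a^\alpha,b^\beta$ as $a,b$. The paper's proof takes that theorem as given rather than reproving it.

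\paragraph{Steps 2 and 3.} Once Step 1 is repaired or cited, these are correct but take a different route from the paper. The paper observes that in the Eulerian grammar \eqref{ge} one has $D(y^\alpha)=\alpha y^\alpha x$ and $D(x^\beta)=\beta x^\beta y$. So the substitution $a=y^\alpha$, $b=x^\beta$ realizes $E$ inside \eqref{ge}. This gives $\gen(x^\beta y^\alpha,t)=\gen(x,t)^\beta\gen(y,t)^\alpha$, with $\gen(y,t)=y(1+xF)$ and $\gen(x,t)=x(1+yF)$, and no integration is needed.

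You instead solve $\partial_t\gen(a,t)=\gen(a,t)\gen(x,t)$ and check a logarithmic derivative. This does check out: $\frac{d}{dt}\log(1+xF)=x+xyF=\gen(x,t)$ and $\frac{d}{dt}\log(1+yF)=y+xyF=\gen(y,t)$, with $F=0$ at $t=0$. Your ODE argument is mechanical and applies to any letter with $D(a)=a\,g$. The paper's shortcut is the observation that $a$ obeys the same rule as $y$, and $b$ the same rule as $x$. Hence $a^{-1}\gen(a,t)=y^{-1}\gen(y,t)$ and $b^{-1}\gen(b,t)=x^{-1}\gen(x,t)$ immediately, which is what your integration rediscovers.
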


The complement of a permutation
$\sigma=\sigma_1\sigma_2 \cdots \sigma_n$ is defined as 
\begin{align} 
\label{def:com}
\sigma^c = (n+1 - \sigma_1)\,
(n+1 - \sigma_2) \, \cdots \, (n+1 - \sigma_n).
\end{align}
Taking the complementation of permutations and exchanging the roles of \(x\) and \(y\),  
Ji \cite{Ji-2025} obtained an equivalent definition 
\begin{align}\label{eq:2.2}
A_n(x,  y\mid \alpha,  \beta)=\sum_{\sigma\in S_{n+1}}x^{\des(\sigma)-1}y^{\asc(\sigma)-1}\alpha^{\lrmin(\sigma)-1}\beta^{\rlmin(\sigma)-1}. 
\end{align}

A grammar to generate $A_n(x,  y\mid \alpha,  \beta)$ 
was found by Ji \cite{Ji-2025}.

\begin{theorem}[Ji \cite{Ji-2025}]
\label{thm:Eulerexten}
Define a grammar $E$ by
\begin{align}
\label{eq:2.3}
E = \{a \to \alpha a x,  \; b \to \beta b y,  \; x \to x y,  \; y \to x y\}, 
\end{align}
and let  $D_E$ be the formal derivative of $E$. 
For $n\geq 0$, we have  
\begin{align}  
    D_E^n(ab) = a    b \,    A_n(x,  y \mid \alpha,  \beta). 
\end{align} 
\end{theorem}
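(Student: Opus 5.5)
The plan is to prove the theorem with a grammatical labeling, using Ji's equivalent form \eqref{eq:2.2} in terms of descents, ascents, left-to-right minima and right-to-left minima. We build permutations of $[n+2]$ from permutations of $[n+1]$ by inserting the largest element $n+2$. This is the right choice: inserting a new maximum never changes whether an old element is a left-to-right or right-to-left minimum. Moreover, $n+2$ is itself a left-to-right minimum only if it is placed first, and a right-to-left minimum only if it is placed last. (Working instead with \eqref{eq:2.1} and maxima would fail, since inserting a maximum destroys old maxima.)

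\emph{The labeling.} For $\sigma=\sigma_1\cdots\sigma_{n+1}\in S_{n+1}$ we use the convention $\sigma_0=\sigma_{n+2}=0$. Then $0$ is always an ascent and $n+1$ is always a descent, so $\asc(\sigma)-1$ and $\des(\sigma)-1$ count the ascents and descents among the interior gaps $\sigma_i\sigma_{i+1}$ with $1\le i\le n$. The labels are:
\begin{itemize}
\item an interior gap with $\sigma_i>\sigma_{i+1}$ gets the label $x$;
\item an interior gap with $\sigma_i<\sigma_{i+1}$ gets the label $y$;
\item the gap before $\sigma_1$ gets the label $a$;
\item the gap after $\sigma_{n+1}$ gets the label $b$.
\end{itemize}
The weight of $\sigma$ is the product of its labels times $\alpha^{\lrmin(\sigma)-1}\beta^{\rlmin(\sigma)-1}$, that is,
\[ a\,b\,x^{\des(\sigma)-1}y^{\asc(\sigma)-1}\alpha^{\lrmin(\sigma)-1}\beta^{\rlmin(\sigma)-1}. \]
For $n=0$ the only permutation is $1$, with labeling $a\,1\,b$ and weight $ab=D_E^0(ab)$, since $A_0=1$.

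\emph{Insertion step.} We check that inserting $n+2$ into each kind of gap acts on the weight by the corresponding rule of $E$.
\begin{itemize}
\item Inserting into an interior descent gap $x$ produces an ascent into $n+2$ followed by a descent out of it. The pair of new gaps is labeled $yx$, so $x\to xy$.
\item Inserting into an interior ascent gap $y$ likewise produces $y\to xy$.
\item Inserting at the front makes $n+2$ a new left-to-right minimum. The gap $(n+2)\sigma_1$ is an interior descent, and the front label $a$ persists, so $a\to \alpha a x$.
\item Inserting at the end makes $n+2$ a new right-to-left minimum. The gap $\sigma_{n+1}(n+2)$ is an interior ascent, so $b\to\beta b y$.
\end{itemize}
In every case the minima statuses of the old elements are unchanged, because $n+2$ exceeds them all. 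Hence the total weight of all permutations obtained from $\sigma$ equals $D_E$ applied to the weight of $\sigma$. Since each $\tau\in S_{n+2}$ arises from exactly one $\sigma\in S_{n+1}$ (delete $n+2$), induction on $n$ gives that $D_E^n(ab)$ is the total weight over $S_{n+1}$. By \eqref{eq:2.2}, this total equals $ab\,A_n(x,y\mid\alpha,\beta)$.

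\emph{Main obstacle.} The only delicate point is the boundary bookkeeping. One has to confirm that the conventions $\sigma_0=\sigma_{n+2}=0$ make the exponent shifts ($-1$ in each statistic) match exactly one unlabeled ascent at the front and one unlabeled descent at the end. One also has to confirm that the end insertions contribute $x$ and $y$ correctly rather than altering $a$ or $b$. Checking the case $n=1$ against $A_1=\alpha x+\beta y$ confirms these conventions.
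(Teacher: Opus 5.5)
Your proof is correct: the four gap types (interior descent, interior ascent, front, end) reproduce the four rules of $E$, old minima are unaffected by inserting $n+2$, and the base case and boundary conventions check out (for instance $D_E(ab)=ab(\alpha x+\beta y)$). The paper gives no proof of this theorem and only cites Ji, but your argument is the grammatical labeling its setup points to: pass to the minima form \eqref{eq:2.2}, label interior descents and ascents as in the Eulerian labeling of Section~3 (with $x$ and $y$ swapped, which is harmless since $x\to xy$ and $y\to xy$ are symmetric), and let $a$ and $b$ mark the two end positions where $n+2$ becomes a new left-to-right or right-to-left minimum.
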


A grammatical calculus for the
generating function of $A_n(x,  y\mid \alpha,  \beta)$
was given by Ji \cite{Ji-2025}. 

The following proof presents an interplay between two formal derivatives.

\noindent 
\textit{Grammatical Proof of Theorem  \ref{thm:carbi}.} 
\quad 
Let 
\[ \gen_E(ab,t) = \sum_{n=0}^\infty 
D_E^n(ab) \frac{t^n}{n!}. \]
Theorem \ref{thm:Eulerexten} asserts that
\begin{align}\label{eq:3.15}
\gen_E(ab, t) = ab \sum_{n\ge 0} A_n(x,  y \mid \alpha,  \beta)\frac{t^n}{n!}.
\end{align}
Let \(D\) be the formal derivative of the
grammar (\ref{ge}),  where $\alpha$ and $\beta$ 
are regarded as constants. Thus,
\begin{align}\label{eq:3.16}
D(y^\alpha) = \alpha y^{\alpha-1}D(y) = \alpha y^\alpha x,  \qquad
D(x^\beta) = \beta x^{\beta-1}D(x) = \beta x^\beta y.
\end{align}

Let \(a = y^\alpha\) and
\(b = x^\beta\). By  (\ref{eq:3.16}), we find that
\[
D(y^\alpha) = D_E(a),  \quad
D(x^\beta) = D_E(b).
\]
Evidently,
\[
D(x)=D_E(x) ,  \quad
D(y)=D_E(y) .
\]

By induction, we see that 
 for \(n\ge 0\),  
\[
D^n(y^\alpha) = D_{{E}}^n(a),  \quad
D^n(x^\beta) = D_{{E}}^n(b),  
\]
where \(a = y^\alpha\) and  \(b = x^\beta\). 
It follows that for \(n\ge 0\),  
\begin{align}\label{eq:3.17}
D_{E}^n(ab) = D^n(x^\beta y^\alpha),
\end{align}
which yields
\begin{align}\label{eq:3.18}
\gen_E(ab, t)=\gen(x^\beta y^\alpha , t).
\end{align}
By the multiplicative property, the following relation
\begin{equation}
    \label{ab}
\gen(x^\beta y^\alpha , t) = \bigl(\gen(x, t)\bigr)^\beta \bigl(\gen(y, t)\bigr)^\alpha \end{equation} 
is valid when $\alpha$ and $\beta$ are integers.
Moreover, expressing a power series $f(x)^\alpha$ as
$e^{\alpha\log f(x)}$, we see that (\ref{ab}) holds 
when $\alpha$ and $\beta$ are regarded as 
parameters.

Along the lines of the proof of
Theorem \ref{thm-a-n}, we deduce that    
\begin{align}\label{eq:3.7}
\gen(y, t)  = \sum_{n\ge 0} D^n(y)\frac{t^n}{n!} = y\bigl(1+xF(x,  y;t)\bigr),  
\end{align}
and 
\begin{align}\label{eq:3.8}
\gen(x, t)  = \sum_{n\ge 0} D^n(x)\frac{t^n}{n!} = x\bigl(1+yF(x,  y;t)\bigr),
\end{align}
where \(F(x,  y;t)\) is given by (\ref{eq:1.8}).  
Substituting
(\ref{eq:3.7}) and (\ref{eq:3.8}) into (\ref{eq:3.18}) gives
\[
\gen_E (ab, t)=y^\alpha x^\beta \bigl(1+xF(x,  y;t)\bigr)^\alpha \bigl(1+yF(x,  y;t)\bigr)^\beta.  
\]
Comparing with \eqref{eq:3.15} yields \eqref{eq:1.7}. This completes the proof. 
\qed

Next we present a grammatical calculus for the
enumeration of permutations with respect to the three kinds of peaks.
Let $n \geq 1$ and 
let $\sigma = \sigma_1 \sigma_2 \cdots \sigma_n$ 
be a permutation of $[n]$. Assume that
$\sigma_0 = \sigma_{n+1} = 0$. 

An index \(i\) is called
\begin{itemize}
\item a left peak if \(1\le i\le n-1\) and
      \(\sigma_{i-1}<\sigma_i>\sigma_{i+1}\);
\item an interior peak if \(2\le i\le n-1\) and
      \(\sigma_{i-1}<\sigma_i>\sigma_{i+1}\);
\item an exterior peak, or outer peak, if \(1\le i\le n\) and \(\sigma_{i-1}<\sigma_i>\sigma_{i+1}\);
\item a valley if $2 \leq i \leq n-1$ and $\sigma_{i-1} >\sigma_i < \sigma_{i+1}$ . 
\end{itemize}

Let 
  \(L(\sigma)\), \(M(\sigma)\), \(W(\sigma)\) and
  \(V(\sigma)\) 
  denote the numbers of left peaks,
  interior peaks, exterior peaks and valleys of
  \(\sigma\), respectively. Such
  pictographic symbols \(L(\sigma)\), \(M(\sigma)\) and \(W(\sigma)\) were proposed in  \cite{Chen-Fu-2024}. 
  Here   the letter \(L\) resembles a peak on the left,
  the letter $M$ resembles peaks in the middle, and
  the letter $W$ indicates that we allow peaks at the
  beginning and at the end. For example,
  let 
  $\sigma = 7\,  1\,  3\,  8\,  5\,  9\,  6\,  2\,  4$, 
  then we have
\[
L(\sigma) = 3,\;  M(\sigma) = 2, \; W(\sigma) = 4, \; V(\sigma)=3.
\]
It is clear that
\begin{align*}
    W(\sigma) -1 = V (\sigma) = M (\sigma^c),
\end{align*}
where $\sigma^c$ is the complement of $\sigma$, as defined by \eqref{def:com}.

For $n \geq 1$ and  $0 \leq k \leq \lfloor n/2 \rfloor$, 
let $L(n,   k)$ be the
number of permutations of $[n]$ with  $k$ 
left peaks. Similarly,
we may define $M(n,   k)$ and $W(n,   k)$, 
with appropriate ranges of $k$. 
For $n = 0$,   set  $L(0,   0) = M(0,   0) = W(0,   0) = 1$. 

Define the polynomials
\begin{align*}
 L_n(x)& = \sum_{k=0}^{\lfloor n/2 \rfloor} L(n,  k)\,x^k,\\[6pt]
     M_n(x) &  = \sum_{k=0}^{\lrf{(n-1)/2}} M(n, k)\,  x^k,
     \\[6pt]
    W_n(x) & = \sum_{k=1}^{\lrf{(n+1)/2}} W(n, k)\,  x^k.
\end{align*}
Let $L(x,  t)$,  $M(x,  t)$ and  $W (x,  t)$ denote
the exponential generating functions of 
$L_n (x)$,  $ M_n (x)$ and $W_n (x)$, respectively.
The generating function of the interior
peak polynomials $ M_n (x)$ was derived by 
David-Barton \cite{David-Barton} by establishing
a partial differential equation. Equivalent 
formulas can be found in Entringer \cite{Entringer}, Kitaev \cite{Kitaev}  and Zhuang \cite{Zhuang-2016}. It is
easy to deduce $W(x,  t)$ 
from $M(x,  t)$. Gessel~\cite{OEIS} found an explicit
formula for $L(x,  t)$. 

\begin{theorem}[Gessel \cite{OEIS}]
   \label{lem1}
  We have
  \begin{align}\label{eq:gessel}
  \sum_{n \ge 0}L_n(x)\,\frac{t^n}{n!}=\frac{\sqrt{1-x}}{\sqrt{1-x} \cosh(t\sqrt{1-x}) - \sinh(t\sqrt{1-x})}.
\end{align}
\end{theorem}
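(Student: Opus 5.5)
The approach is to find a grammatical labeling for left peaks, and then compute the generating function by the grammatical calculus, in the spirit of the proof of Theorem \ref{thm-a-n}.

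\textbf{Labeling and grammar.} Let $\sigma\in S_n$ with $k$ left peaks, under the convention $\sigma_0=\sigma_{n+1}=0$. Consider the $n+1$ gaps into which $n+1$ can be inserted.

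Inserting $n+1$ immediately before or after a left peak does not change the number of left peaks, since the old peak is destroyed and $n+1$ becomes a new one. Inserting $n+1$ at the very end creates no left peak either, because position $n+1$ is never counted. These account for $2k+1$ gaps, which I label by $y$.

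Every other gap creates exactly one new left peak. There are $n-2k$ of them, and I label them by $z$. Each left peak is also given a label $x$, so $\sigma$ has weight $x^k y^{2k+1} z^{n-2k}$.

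Next I check how the labels change:
\begin{itemize}
\item Inserting into a $y$-gap keeps $k$ fixed, and the count of $z$-gaps goes from $n-2k$ to $n+1-2k$. This gives the rule $y\to yz$.
\item Inserting into a $z$-gap gives $k+1$ peaks, so the $y$-count rises by $2$ and the $z$-count drops by one. This gives the rule $z\to xy^2$.
\end{itemize}
So the grammar is $\{y\to yz,\ z\to xy^2\}$, with $x$ treated as a constant. I will verify by induction that $D^n(y)=\sum_{\sigma\in S_n}x^{L(\sigma)}y^{2L(\sigma)+1}z^{n-2L(\sigma)}$, starting from $D^0(y)=y$ for $n=0$. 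It follows that $L_n(x)=D^n(y)|_{y=z=1}$.

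\textbf{The generating function.} Let $Y=\gen(y,t)$ and $Z=\gen(z,t)$. Since $\gen$ commutes with $D$ and is multiplicative by \eqref{Gen-fg}, these satisfy
\[
Y'=YZ, \qquad Z'=xY^2 .
\]
The key observation is that $z^2-xy^2$ is a constant for $D$: indeed $D(z^2)=2xy^2z=D(xy^2)$. Hence
\[
Z^2-xY^2=z^2-xy^2=:c .
\]
Now set $w=Y^{-1}=\gen(y^{-1},t)$. Then $w'=-Zw$, and
\[
w''=-(Z'w+Zw')=-xY+Z^2Y^{-1}=(Z^2-xY^2)\,w=c\,w .
\]
At $y=z=1$ we have $c=1-x$, with initial values $w(0)=1$ and $w'(0)=-1$. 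Solving this linear second-order equation gives
\[
w=\cosh\bigl(t\sqrt{1-x}\bigr)-\frac{\sinh\bigl(t\sqrt{1-x}\bigr)}{\sqrt{1-x}} .
\]
Taking $1/w$ yields \eqref{eq:gessel}.

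\textbf{Expected obstacle.} The main point needing care is the labeling step. In particular I must check the boundary cases: the end gap, the first position (where $\sigma_0=0$ makes $\sigma_1$ a possible left peak), and two peaks sharing a neighbouring gap. The last case cannot occur, since peaks are never adjacent. Checking these confirms the counts $2k+1$ and $n-2k$. The remaining steps, namely finding the invariant and linearizing via $w=Y^{-1}$, are routine once the invariant $z^2-xy^2$ is identified.
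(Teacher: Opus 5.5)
Your proof is correct and follows essentially the paper's argument. Your grammar $\{y\to yz,\ z\to xy^2\}$ is the left-peak grammar \eqref{Grammar_left} after renaming variables and attaching a constant weight $x$ to each new peak, your invariant $z^2-xy^2$ plays the role of $y^2-x^2$, and your equation $w''=cw$ for $w=\gen(y^{-1},t)$ is the recursion $D^{n+2}(y^{-1})=(z^2-xy^2)\,D^n(y^{-1})$, which the paper unrolls explicitly before inverting. The one difference is that the paper first proves the bivariate formula \eqref{eq:2.22} and then specializes, while you build the peak weight into the grammar and set $y=z=1$ directly, which saves the final substitution step.
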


To set up the grammatical calculus for 
peak polynomials, we need the bivariate forms. 
For $n = 0$,    define $L_0(x,   y) = x$,   $M_0(x,   y) = 1$ 
and $W_0(x,   y) = y$. For $n \geq 1$, define 
\begin{align}
L_n(x,   y) &= \sum_{k=0}^{\lfloor n/2 \rfloor} L(n,   k) \, x^{2k+1} y^{n-2k},   \\[6pt]
M_n(x,   y) &= \sum_{k=0}^{\lfloor (n-1)/2 \rfloor} M(n,   k) \,x^{2k+2} y^{n-2k-1},   \\[6pt]
W_n(x,   y) &= \sum_{k=1}^{\lfloor (n+1)/2 \rfloor} W(n,   k)\, x^{2k} y^{n-2k+1}.
\end{align}

Consider the grammar
\begin{align}\label{Grammar_left}
G = \{x \to xy,   \ y \to x^2\}. 
\end{align}
Let $D$ be the formal derivative of the
grammar \eqref{Grammar_left}. We have the 
following grammatical expressions for
the three kinds of peak polynomials. 
For $n \geq 0$,   we have 
\begin{align*}
D^n(x) = L_n(x,   y),   \quad D^n(y) = W_n(x,   y).
\end{align*}
For $n \geq 1$, we have $D^n(y) = M_n(x,   y)$. 
The grammar (\ref{Grammar_left}) 
together with 
the relation $D^n(x)=L_n(x,  y)$
was independently obtained by Ma \cite{Ma-2012} 
and Chen-Fu~\cite{Chen-Taiwan, Chen-Fu-2017}.
A grammatical labeling was given in
\cite{Chen-Fu-2017}. 

The generating function of $L_n(x,y)$ 
can be derived using the grammatical calculus, see Chen-Fu \cite{Chen-Fu-2023-JACO}.

\begin{theorem}[Chen-Fu \cite{Chen-Fu-2023-JACO}]
\label{thm4.5}
 Let $L_0(x,  y)=x$. We have
\begin{align} 
\sum_{n=0}^{\infty}L_n(x,  y)\frac{t^n}{n!}= \frac{x\sqrt{y^2 - x^2}}{\sqrt{y^2 - x^2} \cosh(t\sqrt{y^2 - x^2}) - y \sinh(t\sqrt{y^2 - x^2})}. \label{eq:2.22}
\end{align}
\end{theorem}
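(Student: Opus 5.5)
The plan is to imitate the proof of Theorem \ref{thm-a-n}: compute $\gen(x^{-1},t)$ in closed form and then invert it using the multiplicative property \eqref{Gen-fg}. Here $D$ is the formal derivative of the grammar \eqref{Grammar_left}, and we use $D^n(x)=L_n(x,y)$ together with the convention $L_0(x,y)=x$. Since $\gen(x,t)\,\gen(x^{-1},t)=\gen(1,t)=1$, we get
\[ \gen(x,t)=\frac{1}{\gen(x^{-1},t)}, \]
so it suffices to find $\gen(x^{-1},t)$.

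The first step is to locate a $D$-constant. We have
\[ D(y^2-x^2)=2y\,x^2-2x\cdot xy=0, \]
so $y^2-x^2$ is annihilated by $D$ and behaves like a constant. Write $s^2=y^2-x^2$.

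The second step is to compute the derivatives of $x^{-1}$, which close up into a two-term system:
\[ D(x^{-1})=-x^{-2}\,xy=-x^{-1}y, \qquad D(x^{-1}y)=-x^{-1}y^2+x^{-1}x^2=-s^2x^{-1}. \]
Because $s^2$ is a constant for $D$, induction gives, for $n\ge 0$,
\[ D^{2n}(x^{-1})=s^{2n}x^{-1}, \qquad D^{2n+1}(x^{-1})=-s^{2n}x^{-1}y. \]
Summing the exponential series then yields
\[ \gen(x^{-1},t)=x^{-1}\Bigl(\cosh(st)-y\,\frac{\sinh(st)}{s}\Bigr). \]
Inverting and multiplying numerator and denominator by $s$ gives exactly \eqref{eq:2.22}, with $s=\sqrt{y^2-x^2}$.

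The main point requiring care is the meaning of the square root $s$. We will stress that $\cosh(st)$ and $s^{-1}\sinh(st)$ are power series in $t$ whose coefficients are polynomials in $s^2=y^2-x^2$, so only even powers of $s$ actually occur. Thus \eqref{eq:2.22} is an identity of formal power series in $t$ with coefficients Laurent polynomials in $x$ and $y$. The inversion is legitimate because the constant term of $\gen(x^{-1},t)$ is $x^{-1}$, which is invertible in the Laurent polynomial ring. As a sanity check, setting $y=1$ recovers Gessel's formula \eqref{eq:gessel}: the factor $x^{2k+1}y^{n-2k}$ becomes $x\cdot x^{2k}$, so $x$ is replaced by $x^2$ and there is an overall factor $x$.
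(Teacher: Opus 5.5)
Your proposal is correct and follows the paper's proof essentially step for step. Both arguments compute $D^{2n}(x^{-1})=x^{-1}(y^2-x^2)^n$ and $D^{2n+1}(x^{-1})=-x^{-1}y(y^2-x^2)^n$, sum these into $\cosh$ and $\sinh$ terms, and invert using $\gen(x,t)=1/\gen(x^{-1},t)$; your explicit check that $D(y^2-x^2)=0$, and your remark that only even powers of $\sqrt{y^2-x^2}$ occur (so the identity holds as formal power series), are useful clarifications that the paper leaves implicit.
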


\begin{proof}
Let $D$ be the formal derivative of the grammar \eqref{Grammar_left}. 
For $n \ge 0$,  we have $D^n(x) = L_n(x,   y)$. 
We proceed to compute $\gen(x,  t)$. 
Notice that
\[
D(x^{-1}) = -x^{-1}y,   \quad D^2(x^{-1}) = x^{-1}(y^2 - x^2). 
\]
It follows that for $n \ge 0$,   
\begin{align}
D^{2n}(x^{-1}) &= x^{-1}(y^2 - x^2)^n,   \label{eq:2.18} \\[3pt]
D^{2n+1}(x^{-1}) &= -x^{-1}y(y^2 - x^2)^n. \label{eq:2.19}
\end{align}
Now we have
\begin{align}
\sum_{n=0}^{\infty} D^{2n}(x^{-1}) \frac{t^{2n}}{(2n)!} &= x^{-1} \cosh(t\sqrt{y^2 - x^2}),   \label{eq:2.20} \\
\sum_{n=0}^{\infty} D^{2n+1}(x^{-1}) \frac{t^{2n+1}}{(2n+1)!} &= -\frac{x^{-1}y}{\sqrt{y^2 - x^2}} \sinh(t\sqrt{y^2 - x^2}). \label{eq:2.21}
\end{align}
Combining (\ref{eq:2.20}) and (\ref{eq:2.21}), we deduce that
\begin{align}
\gen(x^{-1},  t) = \frac{\sqrt{y^2 - x^2} \cosh(t\sqrt{y^2 - x^2}) - y \sinh(t\sqrt{y^2 - x^2})}{x\sqrt{y^2 - x^2}}, \label{eq:2.17}
\end{align}
or equivalently,
\begin{align*}
    \gen(x,  t) =\frac{1}{\gen(x^{-1},  t)}=\frac{x\sqrt{y^2 - x^2}}{\sqrt{y^2 - x^2} \cosh(t\sqrt{y^2 - x^2}) - y \sinh(t\sqrt{y^2 - x^2})},
\end{align*}
as required. \qed
\end{proof}

Dividing both sides of  \eqref{eq:2.22} by $x$,  replacing
$x$ by $x^2$ and setting $y = 1$,   we come to Gessel's formula \eqref{eq:gessel}.     

Given a permutation
$\sigma = \sigma_1 \cdots \sigma_n \in S_n$, 
assume that $\sigma_0 = 0$ and $\sigma_{n+1} = +\infty$. 
We now define the following terms. 
\begin{itemize}
    \item For $1  \leq i < n$, if $\sigma_{i-1} < \sigma_i < \sigma_{i+1}$,  then  $i$ is called a left double ascent of 
    $\sigma$.  
    \item For  $1 < i < n$, if  $\sigma_{i-1} < \sigma_i < \sigma_{i+1}$, then  $i$ is called a double ascent of
    $\sigma$. 
    \item For $1 < i \leq n$,
    if $\sigma_{i-1} < \sigma_i < \sigma_{i+1}$, then $i$ is
    called a right double ascent of $\sigma$.

    \item For  $1 \leq i \leq n$,
    if $\sigma_{i-1} < \sigma_i < \sigma_{i+1}$, then  $i$
    is called a left-right double ascent of $\sigma$.  
\end{itemize}

Given a permutation
$\sigma = \sigma_1 \cdots \sigma_n \in S_n$,
the following patterns are defined under
the convention that $\sigma_0 = +\infty$ and $\sigma_{n+1} = 0$, 

\begin{itemize}
    \item For $1 \leq i < n$, if $\sigma_{i-1} > \sigma_i > \sigma_{i+1}$, then $i$ is called a left
    double descent of $\sigma$. 
    
    \item For  $1 < i < n$, if $\sigma_{i-1} > \sigma_i > \sigma_{i+1}$, then $i$ is called a double
    descent of $\sigma$.
    
    \item For $1 < i \leq n$,
    if $\sigma_{i-1} > \sigma_i > \sigma_{i+1}$, then
    $i$ is called a right double descent of $\sigma$.
    
    \item For $1 \leq i \leq n$,
    if $\sigma_{i-1} > \sigma_i > \sigma_{i+1}$,  then
    $i$ is called a left-right double descent of
    $\sigma$. 
\end{itemize}

The number of indices in the above definitions will be 
denoted by concatenation of the initials of the terms. For example, ${\rm lrdd}(\sigma)$ stands for the number of left-right double descents
of $\sigma$. 
Denote by $U(n,  k)$ the number
of permutations of $[n]$ with $k$ double descents.
Define 
 \begin{align}\label{yny}
 U_n(y)=\sum_{k\geq 0}U(n,  k)\, y^k,
 \end{align}
 that is, $U_n(y)$ is the generating function of 
 ${\rm dd}(\sigma)$ over $S_n$. 
 Elizalde-Noy \cite{Elizalde-2003} 
 derived the generating function of $U_n(y)$ 
 via an ordinary differential equation.
 
 \begin{theorem}[Elizalde-Noy \cite{Elizalde-2003}]\label{lem2}
We have
\begin{align}\label{gf3}
\sum_{n=0}^\infty U_n(y)\frac{t^n}{n!}
=\frac{2\sqrt{(y-1)(y+3)}e^{t/2\cdot(1-y+\sqrt{(y-1)(y+3)})}}{1+y+\sqrt{(y-1)(y+3)}-(1+y-\sqrt{(y-1)(y+3)})e^{t\sqrt{(y-1)(y+3)}}}.
\end{align}
\end{theorem}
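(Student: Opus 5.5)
We will prove Theorem \ref{lem2} by the same two-stage route as Theorems \ref{thm-a-n} and \ref{thm4.5}: first find a grammar with a labeling that produces the double-descent polynomials, then compute the generating function from the reciprocal $\gen(u^{-1},t)$. This reciprocal satisfies a linear constant-coefficient relation.

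\emph{Step 1: a grammar for $U_n(y)$.} Use the conventions $\sigma_0=+\infty$, $\sigma_{n+1}=0$, so that double descents are counted with the boundary values. Build $\sigma\in S_{n+1}$ from $\sigma\in S_n$ by inserting the largest element into one of the $n+1$ gaps. Inserting $n+1$ into a gap kills a double descent at a neighbour exactly when that neighbour ceases to be a middle entry of a decreasing run. So we label gaps according to the local pattern around them, along the lines of the labeling behind the grammar \eqref{ge}.

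I expect a grammar of the form
\[
a\to ab,\quad b\to ab y \ (\text{or } b\to a\,b\,\cdot)\ ,
\]
in which $y$ is a weight attached to a gap lying inside a decreasing run. More concretely, I would aim for a three-letter grammar
\[
x\to xy,\qquad y\to xz,\qquad z\to xz,
\]
where $x$ labels an ascent gap and $y,z$ label descent gaps. The weight variable $w$ (the variable of $U_n$) is inserted so that $D^n(\text{start})$ evaluated at the specialisation gives $U_n(w)$. I will settle the exact rules by checking small cases $n\le 3$ against $U_1=1$, $U_2=1+w$, $U_3=3+2w+w^2$.

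A more robust fallback is to work with the generating polynomial in two variables, using the standard fact that the numbers of runs of length at least $3$ govern double descents. Run lengths can be encoded by letters $r_1,r_2,\dots$ with $r_i\to r_{i+1}$, in the spirit of grammar \eqref{gram-bruno}.

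\emph{Step 2: the calculus.} Once we have a grammar in which some variable $u$ satisfies $D^n(u)\mapsto U_n$, we compute $D(u^{-1})$ and $D^2(u^{-1})$. We expect them to be expressible as $u^{-1}$ times polynomials $p,q$ that make $u^{-1}$ satisfy a relation
\[
D^2(u^{-1})=(1-w)\,D(u^{-1})+(w-1)\,u^{-1}\cdot(\text{const}),
\]
just as $D^2(x^{-1})=(y^2-x^2)x^{-1}$ in the proof of Theorem \ref{thm4.5}. The constants are read off from the characteristic roots $\lambda_\pm=\tfrac12\bigl(1-w\pm\sqrt{(w-1)(w+3)}\bigr)$, which appear in \eqref{gf3}. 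Then $\gen(u^{-1},t)=C_+e^{\lambda_+t}+C_-e^{\lambda_-t}$, with $C_\pm$ fixed by the initial values $u^{-1}$ and $D(u^{-1})$. Inverting, by the multiplicative property \eqref{Gen-fg}, gives
\[
\gen(u,t)=\frac{1}{C_+e^{\lambda_+t}+C_-e^{\lambda_-t}}.
\]
Multiplying numerator and denominator by $e^{-\lambda_-t}$ and specialising, we get the form \eqref{gf3}, with $\lambda_+-\lambda_-=\sqrt{(w-1)(w+3)}$ and $-\lambda_-=\tfrac12(1-w+\sqrt{\cdot})$ up to the sign conventions. Finally, rename $w$ as $y$.

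\emph{Main obstacle.} The hard part is Step 1: finding a grammatical labeling for double descents that makes $D^2(u^{-1})$ close up into a linear constant-coefficient relation. If the direct labeling does not close, I would first try replacing the starting variable by a suitable product, as was done with $x^{-1}y$ in the proof of \eqref{Eq:Eulerian}. I would also try a change of variables like $u=xy$, $v=x+y$ from the $\gamma$-positivity argument, to find an invariant playing the role of $x-y$ or $y^2-x^2$.
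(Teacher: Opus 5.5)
The strategy is the one the paper points to, but Step~1 is missing and Step~2 contains a real sign error. In the paper, \eqref{gf3} is the specialization $x=z=w=1$ of Fu's Theorem~\ref{thm2}. That theorem is obtained by exactly this kind of calculus (compute $\gen(z^{-1},t)$, then invert) for the grammar $G=\{x\to xy,\ y\to xz,\ z\to zw,\ w\to xz\}$. There $D^n(z)=V_n(x,y,z,w)$ records left peaks and double descents jointly, justified by Fu's grammatical labeling.

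Step~1 is the whole content, and your proposal does not supply it. Your concrete guess $x\to xy$, $y\to xz$, $z\to xz$ has no variable that records double descents. Setting $z=y$ collapses it to the Eulerian grammar \eqref{ge}, so nothing you read off from it can give $U_n$. Your calibration data are also for a different statistic. A double descent requires $1<i<n$, so the boundary values never matter, and $U_2(y)=2$, $U_3(y)=5+y$. The polynomials $1+w$ and $3+2w+w^2$ you list are the distributions of left double descents ($\sigma_0=+\infty$, $1\le i<n$). Fitting a grammar to them would not lead to \eqref{gf3}.

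Step~2 also goes wrong. From here on, $y$ marks double descents as in Fu's grammar (your $w$ corresponds to this $y$), and $w$ is Fu's label. In the grammar that works, the double-descent variable is itself a label, not a $D$-constant: $D(y)=xz$. So you cannot write a recurrence with coefficients in the statistic's variable and specialize inside $D$. What closes is
\[
D^2(z^{-1})=(y-w)\,D(z^{-1})+(wy-xz)\,z^{-1}.
\]
This follows from $D(z^{-1})=-z^{-1}w$ and $D^2(z^{-1})=z^{-1}(w^2-xz)$. It propagates to all higher derivatives because $w-y$ and $(w+y)^2-4xz$, hence $wy-xz$, are annihilated by $D$.

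Only after forming $\Gamma=\gen(z^{-1},t)$ may you set $x=z=w=1$. This gives $\Gamma''=(y-1)\Gamma'+(y-1)\Gamma$ with $\Gamma(0)=1$, $\Gamma'(0)=-1$, and characteristic roots $\tfrac12\bigl(y-1\pm\sqrt{(y-1)(y+3)}\bigr)$. Then $1/\Gamma$, multiplied through by $e^{t(1-y+\sqrt{(y-1)(y+3)})/2}$, is exactly \eqref{gf3}.

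Your relation $D^2(u^{-1})=(1-w)D(u^{-1})+(w-1)u^{-1}$ has the wrong sign on the first coefficient. Its roots are the negatives of the correct ones, and this is not a matter of convention. With $\Gamma(0)=1$ and $\Gamma'(0)=-1$ it would give $U_2=4-2w$.
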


For the special case $y=0$,
Barry \cite{Barry-2014} and Basset \cite{Basset-2014} 
independently obtained the following formula.

\begin{theorem}[Elizalde-Noy \cite{Elizalde-2003},   Barry \cite{Barry-2014},   Basset \cite{Basset-2014}]\label{lem3}
We have
  \begin{align}\label{No-doubledescents-formula}
  \sum_{n=0}^{\infty}U(n,  0)\frac{t^n}{n!}
    =\frac{\sqrt{3}}{2}\frac{e^{t/2}}{\cos{(\sqrt{3}t/2+\pi/6)}}.
  \end{align}
\end{theorem}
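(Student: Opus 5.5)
The plan is to obtain \eqref{No-doubledescents-formula} as the specialization $y=0$ of the Elizalde--Noy formula \eqref{gf3} in Theorem \ref{lem2}. This works because $U_n(0)=U(n,0)$ by the definition \eqref{yny}, so the left-hand side of \eqref{gf3} at $y=0$ is exactly the left-hand side of \eqref{No-doubledescents-formula}.

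The first step is to justify the substitution. Write $s=\sqrt{(y-1)(y+3)}$. The right-hand side of \eqref{gf3} is unchanged under $s\mapsto -s$: replacing $s$ by $-s$ amounts to multiplying numerator and denominator by $-e^{-ts}$. Hence the coefficient of each $t^n$ is a rational function of $s^2=(y-1)(y+3)$ and $y$. Since it equals $U_n(y)$, it is in fact a polynomial in $y$, and the identity survives evaluation at $y=0$ with either choice of square root. There we take $s=i\sqrt{3}$.

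Next comes the computation. At $y=0$ the right-hand side of \eqref{gf3} becomes
\[
\frac{2i\sqrt{3}\,e^{t(1+i\sqrt3)/2}}{(1+i\sqrt3)-(1-i\sqrt3)e^{i\sqrt3 t}}.
\]
Put $\theta=\sqrt3 t/2$ and multiply numerator and denominator by $e^{-i\theta}$. The numerator becomes $2i\sqrt3\,e^{t/2}$. Using $1\pm i\sqrt3=2e^{\pm i\pi/3}$, the denominator becomes
\[
2\bigl(e^{i(\pi/3-\theta)}-e^{-i(\pi/3-\theta)}\bigr)=4i\sin(\pi/3-\theta)=4i\cos(\theta+\pi/6).
\]
Dividing gives $\frac{\sqrt3}{2}\,e^{t/2}/\cos(\sqrt3t/2+\pi/6)$, which is \eqref{No-doubledescents-formula}.

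The only delicate point is the branch and formal-series justification in the first step; the rest is the trigonometric simplification above. As a sanity check, I would expand both sides to low order to confirm $U(0,0)=U(1,0)=U(2,0)=1$ and $U(3,0)=5$.

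If a proof independent of Theorem \ref{lem2} is preferred, I would try a grammar whose formal derivative generates permutations by double descents, in the spirit of the grammar \eqref{Grammar_left} for peaks. I would then compute $\gen(\cdot,t)$ of a reciprocal as in the proof of Theorem \ref{thm4.5}, aiming for a second-order relation $D^2(f^{-1})=c\,f^{-1}$ with constant $c$. I expect the main obstacle on that route to be finding the right grammar and labeling.
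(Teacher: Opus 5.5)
Your proposal is correct and follows the paper's route: the paper obtains \eqref{No-doubledescents-formula} simply by setting $y=0$ in the Elizalde--Noy formula \eqref{gf3} (equivalently, $x=z=w=1$ and $y=0$ in Fu's Theorem \ref{thm2}), and your evenness-in-$s$ justification and trigonometric simplification supply the details the paper leaves implicit. One small slip in your sanity check: $U(2,0)=2$, not $1$, since neither $12$ nor $21$ has an index $i$ with $1<i<2$; this agrees with the $t^2$-coefficient $1=U(2,0)/2!$ of the right-hand side.
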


Fu \cite{Fu-2018} presented a grammatical 
calculus for the joint distribution of the
number of left peaks and the number of double 
descents over permutations of $[n]$. Consider the
grammar
\begin{align}
G=\{ x \rightarrow xy,  \; y \rightarrow xz,  \; z \rightarrow zw,  \; w \rightarrow xz\}.
\end{align}
It can be seen that
this grammar is a unification 
of the grammar (\ref{ge}) for the Eulerian polynomials
and the grammar (\ref{Grammar_left}) for the number 
of left peaks. Substituting $w$ with
$x$ and $z$ with $y$,   
$G$ becomes (\ref{ge}). It should be noted that
such substitutions are consistent with the original grammar.
On the other hand, substituting $z$ with $x$ and 
$w$ with $y$,   
 $G$ becomes the grammar (\ref{Grammar_left}).
Let  $V_n(i,   j)$ denote 
the number of permutations of $[n]$ with
$i$ left peaks and  $j$ double descents. 
Let $V_0(x,  y,  z,  w)=z$.  For $n\ge1$,   define
\begin{align}\label{eq4}
V_n(x,  y,  z,  w)=\sum_{i,  j}
V_n(i,   j)\,x^i y^j z^{i+1} w^{n-2i-j},  
\end{align}
where the sum ranges over $i$ and $j$ 
subject to $2i+j\leq n$.

Fu \cite{Fu-2018} gave a grammatical 
labeling of permutations and obtained
the generating function of $V_n(x,  y,  z,  w)$ by
the grammatical calculus. 

\begin{theorem}[Fu \cite{Fu-2018} ]\label{thm2}
  We have
  \begin{align*}
  &\sum_{n\ge0}V_n(x,   y,  z,   w)\frac{t^n}{n!}\\
  &\qquad =\frac{2z\sqrt{(w + y)^2 - 4xz}
  e^{t/2 \cdot (w-y+\sqrt{(w + y)^2 - 4xz})}}
  {w+y+\sqrt{(w + y)^2 - 4xz}-
  (w+y-\sqrt{(w + y)^2 - 4xz})e^{t\sqrt{(w + y)^2 - 4xz}}}.
  \end{align*}
\end{theorem}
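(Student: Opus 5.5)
Following the proofs of Theorem \ref{thm-a-n} and Theorem \ref{thm4.5}, I would start from the grammatical expression $V_n(x,y,z,w)=D^n(z)$ for $n\ge 0$, where $D$ is the formal derivative of $G=\{x\to xy,\ y\to xz,\ z\to zw,\ w\to xz\}$. This expression is given by Fu's grammatical labeling, which I take as known. The target is then $\gen(z,t)=\sum_{n\ge0}D^n(z)t^n/n!$. By the multiplicative property (\ref{Gen-fg}), $\gen(z,t)=1/\gen(z^{-1},t)$, so it suffices to compute $\gen(z^{-1},t)$. The reason for passing to $z^{-1}$ is that its iterated derivatives should satisfy a constant-coefficient linear recurrence.

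Concretely, $D(z^{-1})=-z^{-2}D(z)=-z^{-1}w$. Then
\[
D^2(z^{-1})=z^{-1}w^2-z^{-1}xz=z^{-1}(w^2-xz).
\]
I would look for $D$-constants, or nearly constant combinations, involving $w,y,x,z$. We have $D(w-y)=xz-xz=0$, so $c:=w-y$ is a constant. Also
\[
D(xz)=xyz+xzw=xz(w+y).
\]
With $s:=w+y$ this gives $D(s)=2xz$ and $D(s^2)=4sxz=4D(xz)$. Hence $\Delta:=(w+y)^2-4xz$ is a $D$-constant, which matches the square root in the statement.

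Next I would rewrite $D^2(z^{-1})$ in terms of $D(z^{-1})$ and $z^{-1}$. Since $w^2-xz=w^2-\tfrac14(s^2-\Delta)$ and $w=(s+c)/2$, we get
\[
w^2-xz=\tfrac14\bigl((s+c)^2-s^2+\Delta\bigr)=\tfrac14(2sc+c^2+\Delta)=cw+\tfrac14(\Delta-c^2).
\]
Therefore
\[
D^2(z^{-1})=c\,z^{-1}w+\tfrac14(\Delta-c^2)z^{-1}=-c\,D(z^{-1})+\tfrac14(\Delta-c^2)z^{-1}.
\]
Because $c$ and $\Delta$ are constants, applying $D^n$ gives the linear recurrence
\[
f_{n+2}=-c\,f_{n+1}+\tfrac14(\Delta-c^2)f_n,\qquad f_n=D^n(z^{-1}).
\]
Its characteristic roots are $r_\pm=\tfrac12(-c\pm\sqrt\Delta)=\tfrac12(y-w\pm\sqrt\Delta)$.

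The exponential generating function is then $\gen(z^{-1},t)=Ae^{r_+t}+Be^{r_-t}$, with $A+B=z^{-1}$ and $Ar_++Br_-=-z^{-1}w$. This gives
\[
A=\frac{z^{-1}(-w-r_-)}{\sqrt\Delta}=\frac{z^{-1}\bigl(\sqrt\Delta-(w+y)\bigr)}{2\sqrt\Delta},\qquad
B=\frac{z^{-1}\bigl(\sqrt\Delta+(w+y)\bigr)}{2\sqrt\Delta}.
\]
Taking the reciprocal and multiplying numerator and denominator by $e^{-r_-t}$, where $-r_-=\tfrac12(w-y+\sqrt\Delta)$, yields
\[
\gen(z,t)=\frac{2z\sqrt\Delta\,e^{t(w-y+\sqrt\Delta)/2}}{w+y+\sqrt\Delta-(w+y-\sqrt\Delta)e^{t\sqrt\Delta}},
\]
which is the claimed formula. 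As in Theorem \ref{thm4.5}, one can avoid the square root by treating $\cosh$ and $\sinh$ of $t\sqrt\Delta$ as power series in $\Delta$.

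The main obstacle is spotting the invariants $w-y$ and $(w+y)^2-4xz$. These are what reduce $D^2(z^{-1})$ to a constant-coefficient combination of $z^{-1}$ and $D(z^{-1})$; everything after that is routine algebra.
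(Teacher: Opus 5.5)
Your argument is correct. The paper itself only cites Fu for this theorem and gives no proof, but your route follows the paper's grammatical-calculus template from the proofs of Theorems~\ref{thm-a-n} and~\ref{thm4.5}: pass to $1/\gen(z^{-1},t)$, find the $D$-constants $w-y$ and $(w+y)^2-4xz$, and solve the resulting constant-coefficient recurrence for $D^n(z^{-1})$. The only new feature compared with Theorem~\ref{thm4.5} is the first-order term $-(w-y)\,D(z^{-1})$, which you handle correctly via the characteristic roots; setting $z=x$, $w=y$ makes this term vanish and reduces your computation to that proof.
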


Theorem \ref{thm2} has several 
specializations. Setting $y=z=w=1$,  
   it reduces to Theorem \ref{lem1}. 
   Setting $z=x$ and $w=y$, we are led to 
   Theorem \ref{thm4.5}. Setting $x=z=w=1$
   yields Theorem \ref{lem2}. Setting
   $x=z=w=1$ and $y=0$, we come to Theorem \ref{lem3}.

Carlitz-Scoville \cite{Carlitz-Scoville-1974} 
investigated the joint distribution of several
statistics of permutations, and obtained 
the following generating function, as restated by
Stanley \cite{Stanley-I}.

\begin{align}
\label{css}
\sum_{n \ge 1} \left( \sum_{\sigma \in S_n} u^{V(\sigma)} v^{W(\sigma)-1} w^{\lda(\sigma)} z^{\rdd(\sigma)} \right) \frac{t^n}{n!} = F(x, y; t), 
\end{align}
where $x + y = w + z$, $xy = uv$,  and \(F(x,  y;t)\) is
given by (\ref{eq:1.8}).
Fu \cite{Fu-2018} presented a grammatical
labeling along with the grammatical
calculus for the above formula. 
The original formulation of 
Carlitz-Scoville can be 
recovered by differentiating (\ref{css})
with respect to $t$.
For $n \ge 1$, define 
    \begin{align}\label{pnb}
    P_n(u, v, w, z) = \sum_{\sigma \in S_{n+1}} u^{V(\sigma)} v^{W(\sigma)-1} w^{\rdd(\sigma)} z^{\lda(\sigma)}. 
    \end{align}
For $n=0$, set $P_0(u, v, w, z) = 1$.
Notice that the above definition (\ref{pnb})
aligns with the definition (\ref{eq4}).

\begin{theorem}[Carlitz-Scoville \cite{Carlitz-Scoville-1974}]
\label{thm:4.9}
    We have
    \begin{align}
    \sum_{n \ge 0} P_n(u, v, w, z) \frac{t^n}{n!} = (1 + yF(x, y; t))(1 + xF(x, y; t)), 
    \end{align}
    where $x + y = w + z$, $xy = uv$,  and \(F(x,  y;t)\) is
    given by (\ref{eq:1.8}).
\end{theorem}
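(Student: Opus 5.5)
We prove Theorem \ref{thm:4.9} in the same way as Theorem \ref{thm:carbi}: find a grammar that generates $P_n(u,v,w,z)$, then reduce its formal derivative to the Eulerian grammar (\ref{ge}) through a change of variables.

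\emph{Step 1: a grammar and labeling for $P_n$.} We use a four-letter grammar in the spirit of Fu's grammar for $V_n$. Consider
\[
H=\{a\to a w,\; b\to b z,\; w\to uv,\; z\to uv,\; u\to u(w+z)-?\}
\]
and fix its exact form by requiring that $uv$ and $w+z$ play the roles of $xy$ and $x+y$. The natural choice is $D(u)=uz$, $D(v)=vw$, $D(w)=D(z)=uv$. Inserting $n+2$ into a labeled permutation of $[n+1]$ (with boundary conventions $\sigma_0=\sigma_{n+2}$ suitably chosen) changes a valley/peak/double-ascent/double-descent slot, and this change must match one of these productions. We then have to verify that $D_H^n(\text{initial word})$ equals the initial word times $P_n$. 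This grammatical labeling is the step I expect to be the main obstacle. The labels must be chosen so that every insertion position carries exactly one label, and so that the end positions (which affect $W$, $\rdd$, $\lda$) are handled by extra letters, analogous to $a,b$ in Ji's grammar $E$.

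\emph{Step 2: reduce to the Eulerian grammar.} Set $x+y=w+z$ and $xy=uv$, so that $x$ and $y$ are the roots of $T^2-(w+z)T+uv$. Under the rules of Step 1,
\[
D(w+z)=2uv,\qquad D(uv)=uv(w+z).
\]
This is exactly the grammar $H=\{u\to uv,\ v\to 2u\}$ of Ma-Ma-Yeh, written in the variables $xy$ and $x+y$. Hence the symmetric functions of $x,y$ evolve as under $x\to xy$, $y\to xy$. The initial word should be a product that corresponds to $xy$ up to a factor, as in the proof of Theorem \ref{thm:carbi}, where $a=y^\alpha$ and $b=x^\beta$. Here we aim for $D_H^n(\text{word})=D^n(xy)/(xy)$ times the initial weight, with $D$ the derivative of (\ref{ge}). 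For this we check $D_H=D$ on the generators, and then extend to all $n$ by induction.

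\emph{Step 3: generating function.} By the multiplicative property (\ref{Gen-fg}), $\gen(xy,t)=\gen(x,t)\,\gen(y,t)$. Formulas (\ref{eq:3.7}) and (\ref{eq:3.8}) give $\gen(y,t)=y(1+xF)$ and $\gen(x,t)=x(1+yF)$. Dividing by $xy$ gives
\[
\sum_{n\ge0}P_n\frac{t^n}{n!}=(1+yF(x,y;t))(1+xF(x,y;t)),
\]
which is symmetric in $x$ and $y$, and hence a well-defined function of $w+z$ and $uv$. As a consistency check, the result is compatible with (\ref{css}) by differentiating in $t$ and using $F'=(1+xF)(1+yF)$, which follows directly from (\ref{eq:1.8}).
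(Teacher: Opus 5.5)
Your route is the paper's route. Theorem~\ref{thm:4.9} is the case $\alpha=\beta=1$ of Theorem~\ref{thm:Carlitz-ji}, and that theorem rests on three ingredients:
\begin{itemize}
\item Ji's grammar for $P_n$;
\item the fact that $ab$, $w+z$, $uv$ evolve under $D_H$ exactly as $xy$, $x+y$, $xy$ evolve under (\ref{ge});
\item the identity $\gen(xy,t)=\gen(x,t)\gen(y,t)$ together with (\ref{eq:3.7}) and (\ref{eq:3.8}).
\end{itemize}
Your Steps~2 and~3 carry this out correctly.

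\textbf{The gap is Step~1.} You never establish that any grammar generates $P_n$. The rule for $u$ is literally left as a placeholder, and the labeling that would give $D_H^n(ab)=ab\,P_n(u,v,w,z)$ is not supplied, yet everything downstream depends on it. This is exactly what Theorem~\ref{thm:fourvaria} (Ji) provides: at $\alpha=\beta=1$, the grammar $\{a\to az,\ b\to bw,\ z\to uv,\ w\to uv,\ u\to uw,\ v\to vz\}$ satisfies $D_H^n(ab)=ab\,P_n(u,v,w,z)$.

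Your guessed grammar is this one with $w$ and $z$ interchanged. That is harmless: it produces $P_n(u,v,z,w)$, and $P_n$ is symmetric in $w,z$. One way to see this is that reversing a permutation interchanges $\rdd$ and $\lda$ while preserving exterior peaks and valleys. Another is that your Step~2 computation shows the output depends only on $w+z$ and $uv$. So you should invoke that theorem (or Fu's labeling) instead of leaving Step~1 as an expected obstacle.

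\textbf{A smaller point in Step~2.} The phrase ``check $D_H=D$ on the generators'' is not accurate as stated. You cannot identify $u,v,w,z$ with $x,y$ without losing generality. The substitution $u=w=x$, $v=z=y$, $a=y$, $b=x$ does make the rules agree, but by itself it only yields the specialization $P_n(x,y,x,y)$.

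What you actually need is the following:
\begin{itemize}
\item $D_H$ maps the subalgebra generated by $ab$, $w+z$, $uv$ into itself. This uses $D_H(ab)=ab(w+z)$, which you did not write down, in addition to $D_H(w+z)=2uv$ and $D_H(uv)=uv(w+z)$.
\item The ring homomorphism $ab\mapsto xy$, $w+z\mapsto x+y$, $uv\mapsto xy$ intertwines $D_H$ with $D$.
\end{itemize}
Then $D_H^n(ab)=ab\,Q_n(w+z,uv)$ and $xy\,Q_n(x+y,xy)=D^n(xy)$ for the same polynomial $Q_n$. Since $x+y$ and $xy$ are algebraically independent, this determines $Q_n$, and the stated identity under $x+y=w+z$, $xy=uv$ follows. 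This is the precise content of your remark that ``the symmetric functions of $x,y$ evolve as under $x\to xy$, $y\to xy$.''
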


Ji \cite{Ji-2025} introduced the \((\alpha,  \beta)\)-extension of the above polynomials $P_n(u,  v,  w,  z)$, \
that is, for $n\geq 1$,   define
\begin{align}\label{defi-P}
P_n(u,  v,  w,  z\mid \alpha,  \beta)
=\sum_{\sigma\in S_{n+1}}
u^{\mathrm{V}(\sigma)}v^{W(\sigma)-1}w^{\rdd(\sigma)}z^{\lda(\sigma)}
\alpha^{\lrmax(\sigma)-1}\beta^{\rlmax(\sigma)-1}.
\end{align} 
By convention, set \(P_0(u,  v,  w,  z\mid \alpha,  \beta)=1\).
The first few values of \(P_n(u,  v,  w,  z\mid \alpha,  \beta)\) are as follows:
\begin{align*}
P_1(u,  v,  w,  z\mid \alpha,  \beta)
&=\alpha z+\beta w;\\[5pt]
P_2(u,  v,  w,  z\mid \alpha,  \beta)
&=\alpha^2 z^2+2\alpha\beta wz+\beta^2 w^2+\alpha vu+\beta vu;\\[5pt]
P_3(u,  v,  w,  z\mid \alpha,  \beta)
&=\alpha^3 z^3+3\alpha^2\beta wz^2+3\alpha\beta^2 w^2z
+3\alpha\beta uvz+3\alpha^2 uvz+\alpha uvz\\[5pt]
&\quad +\beta uvz+\beta^3 w^3+3\beta^2 uvw+\alpha uvw+\beta uvw+3\alpha\beta uvw.
\end{align*}

By taking complementation of permutations, we have
the following equivalent definition:
\begin{align}\label{eq:4.49}
P_n(u,  v,  w,  z \mid \alpha,  \beta) = \sum_{\sigma\in S_{ n+1}} (uv)^{M(\sigma)}w^{\rda(\sigma)}z^{\ldd(\sigma)}\alpha^{\lrmin(\sigma)-1}\beta^{\rlmin(\sigma)-1}.
\end{align}

The following grammar for \(P_n(u,  v,  w,  z \mid \alpha,  \beta)\) was given by Ji \cite{Ji-2025}: 
\[
{H} = \{a \to \alpha a z,  \; b \to \beta b w,  \; z \to uv,  \; w \to uv,  \; u \to uw,  \; v \to vz\} 
\]

\begin{theorem}[Ji \cite{Ji-2025}]\label{thm:fourvaria}
Let \(D_{{H}}\) be the formal derivative of
the above grammar $H$. For \(n \ge 0\),  we have
\[
D_{{H}}^n(ab) = ab P_n(u,  v,  w,  z \mid \alpha,  \beta). 
\]
\end{theorem}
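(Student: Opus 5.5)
The plan is to prove Theorem \ref{thm:fourvaria} by a grammatical labeling of permutations in $S_{n+1}$, using the complemented form \eqref{eq:4.49}. Then we show that inserting the new maximal element transforms the labels exactly according to the rules of $H$. Induction on $n$ then gives $D_H^n(ab)=ab\,P_n(u,v,w,z\mid\alpha,\beta)$. The base case $n=0$ is $S_1=\{1\}$ with weight $ab$. It is more convenient to insert a new minimum, so I will work with \eqref{eq:4.49}. Thus we generate $\sigma\in S_{n+2}$ from $\sigma\in S_{n+1}$ by inserting $0$ into one of the $n+2$ gaps and then relabeling; equivalently, we insert a new maximum into the complement. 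The statistics tracked are interior peaks $M$, right double ascents, left double descents, left-to-right minima and right-to-left minima.

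The labeling I would try is as follows. We use the conventions $\sigma_0=+\infty$ on the left and $\sigma_{n+2}=+\infty$ on the right, appropriate for double ascents and descents relative to minima.
\begin{itemize}
\item Put $a$ in the leftmost gap and $b$ in the rightmost gap.
\item For each left double descent $\sigma_{i-1}>\sigma_i>\sigma_{i+1}$, place $z$ in the gap just after $\sigma_i$ (the descending side).
\item For each right double ascent, place $w$ in the gap just before $\sigma_i$.
\item For each interior peak, place $u$ in the gap to its left and $v$ in the gap to its right.
\end{itemize}
With this arrangement every gap carries exactly one label. The gap count $n+2$ should match $2$ (for $a,b$), plus $\ldd+\rda$, plus $2M$. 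I would check this count from the fact that each non-extremal letter is a peak, valley, double ascent or double descent, and that valleys and peaks alternate.

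Next comes the case check, which should mirror the rules of $H$.
\begin{itemize}
\item Inserting the new minimum at the far left creates a new left-to-right minimum, and the old first letter becomes a double descent start; this gives $a\to\alpha a z$.
\item Symmetrically, inserting at the far right gives $b\to\beta b w$.
\item Inserting into a $z$-gap (inside a double descent) creates a new interior peak at the left letter, so the gap splits into a $u$-gap and a $v$-gap; this gives $z\to uv$.
\item The same happens for a $w$-gap, giving $w\to uv$.
\item Inserting into a $u$-gap (the ascending side of a peak) keeps the peak and produces a double-ascent position, giving $u\to uw$.
\item Likewise, inserting into a $v$-gap gives $v\to vz$.
\end{itemize}
Along the way we verify that the left-to-right and right-to-left minima counts change only in the end cases. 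This holds because an interior insertion of a new minimum does not create a new left-to-right minimum, and this is where $\alpha$ and $\beta$ enter.

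The main obstacle is getting the precise placement of labels and boundary conventions right. The subtle cases are letters adjacent to the ends, the case $n+1$ small, and confirming that the complemented statistics $\rda,\ldd$ with the stated boundary conventions are exactly what the labels count, including the factor $uv$ per interior peak. If the direct labeling misbehaves at the ends, I would fall back on the substitution trick used for Theorem \ref{thm:carbi}. That is, I would check that setting $u$, $v$, $w$, $z$ in terms of the variables of Fu's four-variable grammar is consistent, and I would compare with Theorem \ref{thm:Eulerexten} at the level of the labels $a,b$, verifying the small cases $P_1,P_2,P_3$ listed above as a sanity check.
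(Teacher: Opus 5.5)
\textbf{There is a genuine gap: you insert a new minimum, but the parameters in \eqref{eq:4.49} record minima.} The overall plan is sound. A grammatical labeling of the model \eqref{eq:4.49} with $\sigma_0=\sigma_{n+2}=+\infty$ is the right kind of proof, and your count $n+2=2+\ldd+\rda+2M$ is correct. (The paper gives no proof of this theorem; it only cites Ji.) The problem is the direction of insertion.

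In \eqref{eq:4.49}, $\alpha$ and $\beta$ count left-to-right and right-to-left minima. Wherever a new minimum is inserted, it is itself both a left-to-right and a right-to-left minimum. It also destroys every old left-to-right minimum to its right and every old right-to-left minimum to its left.
\begin{itemize}
\item Your claim that an interior insertion of a new minimum creates no new left-to-right minimum is false. Inserting a new minimum into the middle of $12$ gives $213$, and $(\lrmin,\rlmin)$ changes from $(1,2)$ to $(2,2)$.
\item A front insertion does not contribute a factor $\alpha$. It resets $\lrmin$ to $1$ and raises $\rlmin$ by one.
\end{itemize}

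\textbf{The local peak and double-descent bookkeeping also fails for a new minimum.}
\begin{itemize}
\item At the far left, the new letter is a valley (because $\sigma_0=+\infty$). The old $\sigma_1$ becomes a peak or a double ascent, so the labels change as $az\to auv$ or $a\to aw$, not $a\to\alpha az$.
\item Inserting into the gap after a double descent $\sigma_i$ leaves $\sigma_i$ unchanged and changes the type of $\sigma_{i+1}$. So the effect is not determined by the label $z$ on that gap.
\end{itemize}
Your fallback does not repair this. Substitutions, or checking $P_1,P_2,P_3$, do not supply the missing combinatorial argument.

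\textbf{The fix is to insert the new maximum $n+2$ into $\sigma\in S_{n+1}$ in the model \eqref{eq:4.49}.} Equivalently, insert a new minimum in the original model \eqref{pnb}, where $\lrmax$ and $\rlmax$ are tracked. Then:
\begin{itemize}
\item $\lrmin$ and $\rlmin$ change only at the ends.
\item At the front, $n+2$ is a new left double descent at position $1$ and a new left-to-right minimum, and nothing else changes. This gives $a\to\alpha az$.
\item At the end, $n+2$ is a new right double ascent and a new right-to-left minimum. This gives $b\to\beta bw$.
\item In an interior gap, $n+2$ becomes an interior peak. In a descent gap $\sigma_j>\sigma_{j+1}$, only $\sigma_j$ changes: a double descent becomes a valley, or a peak becomes a double ascent. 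In an ascent gap, only $\sigma_{j+1}$ changes: a double ascent becomes a valley, or a peak becomes a double descent.
\end{itemize}

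\textbf{The labels must be placed to match these changes.}
\begin{itemize}
\item Put $z$ in the gap after each double descent and $w$ in the gap before each double ascent. Inserting there gives $z\to uv$ and $w\to uv$.
\item For each interior peak, put $v$ in the ascending gap on its left and $u$ in the descending gap on its right. Inserting there gives $v\to vz$ and $u\to uw$.
\end{itemize}
This is the reverse of your placement. With your $u$-left, $v$-right placement, the insertion would produce $u\to uz$ and $v\to vw$. With the corrected labeling, the six rules of $H$ hold, and induction on $n$ gives the theorem.
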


Employing Theorem \ref{thm:fourvaria},    Ji \cite{Ji-2025} 
obtained the following generating function.

\begin{theorem}[Ji \cite{Ji-2025}]
\label{thm:Carlitz-ji}
We have
\[
\sum_{n\geq 0}P_n(u,  v,  w,  z\mid \alpha,  \beta)\frac{t^n}{n!}
=\bigl(1+yF(x,  y;t)\bigr)^{\frac{\alpha+\beta}{2}}
\bigl(1+xF(x,  y;t)\bigr)^{\frac{\alpha+\beta}{2}}
e^{\frac{1}{2}(\beta-\alpha)(w-z)t},  
\]
where \(x+y=w+z\),  \(xy=uv\),  and \(F(x,  y;t)\) is 
given by  (\ref{eq:1.8}). 
\end{theorem}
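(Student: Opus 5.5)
The plan is to imitate the grammatical proof of Theorem \ref{thm:carbi}: replace the labels $a$ and $b$ of $H$ by powers of the other variables, so that the formal derivative of $H$ is simulated by a derivative on $u,v,w,z$ alone. Let $K=\{z\to uv,\; w\to uv,\; u\to uw,\; v\to vz\}$ and let $D$ be its formal derivative, with $\alpha,\beta$ treated as constants. Then
\[
D(v^\alpha)=\alpha v^\alpha z,\qquad D(u^\beta)=\beta u^\beta w .
\]
So with $a=v^\alpha$ and $b=u^\beta$ we get $D(a)=D_H(a)$ and $D(b)=D_H(b)$, and $D$ agrees with $D_H$ on $u,v,w,z$. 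By induction, exactly as in the derivation of \eqref{eq:3.17}, we obtain $D_H^n(ab)=D^n(u^\beta v^\alpha)$. Theorem \ref{thm:fourvaria} then gives
\[
ab\sum_{n\ge 0}P_n(u,v,w,z\mid\alpha,\beta)\frac{t^n}{n!}=\gen(u^\beta v^\alpha,t)=\gen(u,t)^\beta\,\gen(v,t)^\alpha .
\]
The last equality is the multiplicative property \eqref{Gen-fg}, extended to parameters $\alpha,\beta$ through $f^\alpha=e^{\alpha\log f}$ as in \eqref{ab}.

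To compute $\gen(u,t)$ and $\gen(v,t)$, I would combine the product $uv$ and the quotient $u/v$.

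For the quotient, $D(u/v)=(u/v)(w-z)$. Since $D(w-z)=uv-uv=0$, the quantity $w-z$ is a constant for $D$. Hence $D^n(u/v)=(u/v)(w-z)^n$ and
\[
\gen(u/v,t)=\frac{u}{v}\,e^{(w-z)t}.
\]

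For the product, put $s=uv$ and $r=w+z$. Then $D(s)=s r$ and $D(r)=2s$, which is exactly the grammar $\{u\to uv,\; v\to 2u\}$ of Ma-Ma-Yeh for the Eulerian grammar \eqref{ge}, with $s\leftrightarrow xy$ and $r\leftrightarrow x+y$. Therefore $D^n(uv)$ equals $D_G^n(xy)$ evaluated at $xy=uv$, $x+y=w+z$, where $D_G$ is the formal derivative of \eqref{ge}. By multiplicativity together with \eqref{eq:3.7} and \eqref{eq:3.8},
\[
\gen(uv,t)=\gen(x,t)\,\gen(y,t)=uv\,\bigl(1+xF(x,y;t)\bigr)\bigl(1+yF(x,y;t)\bigr).
\]
As a check, the case $\alpha=\beta=1$ recovers Theorem \ref{thm:4.9}.

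Finally, $\gen(u,t)^2=\gen(uv,t)\,\gen(u/v,t)$ and $\gen(v,t)^2=\gen(uv,t)/\gen(u/v,t)$. Taking the powers $\beta/2$ and $\alpha/2$ gives
\[
\gen(u,t)^\beta\gen(v,t)^\alpha=u^\beta v^\alpha\bigl((1+xF)(1+yF)\bigr)^{\frac{\alpha+\beta}{2}}e^{\frac12(\beta-\alpha)(w-z)t}.
\]
Dividing by $ab=u^\beta v^\alpha$ yields the claimed formula.

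The step I expect to be the main obstacle is justifying the half-integer and parametric powers. I would work in formal power series in $t$ whose constant terms are monomials $u^\beta v^\alpha$, and write every power as $\exp$ of $\alpha$ or $\beta$ times a logarithm, as the paper does for \eqref{ab}. Then $\gen(u,t)^\beta=\exp\bigl(\tfrac{\beta}{2}(\log\gen(uv,t)+\log\gen(u/v,t))\bigr)$ holds identically. I would also verify that the Ma-Ma-Yeh substitution is legitimate: $D^n(uv)$ is a polynomial in $s,r$ obtained by the same recursion as $D_G^n(xy)$ in $xy,x+y$, so the two agree term by term.
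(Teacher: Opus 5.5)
Your proof is correct and follows the route the paper indicates: the paper gives no written-out proof, only crediting Ji with ``employing Theorem \ref{thm:fourvaria}'', and your argument is exactly that grammatical calculus, modeled on the paper's grammatical proof of Theorem \ref{thm:carbi}. You substitute $a=v^\alpha$, $b=u^\beta$, then split into $uv$ (which obeys the Ma--Ma--Yeh form of the Eulerian grammar with $xy=uv$, $x+y=w+z$) and $u/v$ (governed by the $D$-constant $w-z$); the two points you flag, the legitimacy of that change of variables and defining non-integral powers through $\exp$/$\log$ of series with monomial constant terms, are the ones needing care, and both go through.
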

  
There are several consequences of Theorem \ref{thm:Carlitz-ji}. 
First, we get an explicit expression for
the generating function of 
\(P_n(u,  v,  w,  z\mid \alpha,  \beta)\). 

\begin{theorem}[Ji \cite{Ji-2025}] \label{thm:Carlitz-ji-main2} We have
\begin{align*}
&\sum_{n\geq 0} P_n(u ,  v,  w,  z ~|~{\alpha},  {\beta}) \frac{t^n}{n!}=e^{\frac{1}{2}({\beta}-{\alpha})(w-z )t}\times \left(\cosh\left(\frac{t}{2}\sqrt{(w+z )^2  -4u v}\right) \right.\\[3pt]
&   \qquad \qquad \qquad \qquad \qquad\qquad\qquad \left. -\frac{w+z }{\sqrt{(w+z )^2-4u v}}\sinh\left(\frac{t}{2}\sqrt{(w+z )^2-4u v}\right)\right)^{-({\alpha}+{\beta})}.
\end{align*}
\end{theorem}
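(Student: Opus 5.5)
The plan is to start from Theorem~\ref{thm:Carlitz-ji} and simplify the two factors $1+xF(x,y;t)$ and $1+yF(x,y;t)$ in closed form. Once their product is simplified, raising it to the power $(\alpha+\beta)/2$ gives a single hyperbolic expression raised to $-(\alpha+\beta)$. The exponential factor $e^{\frac12(\beta-\alpha)(w-z)t}$ passes through unchanged.

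\textbf{Step 1: the two factors.} Put both over the common denominator $xe^{yt}-ye^{xt}$ from \eqref{eq:1.8}. In the numerator of $1+xF$ the terms in $xe^{yt}$ cancel, leaving $(x-y)e^{xt}$. In the numerator of $1+yF$ the terms in $ye^{xt}$ cancel, leaving $(x-y)e^{yt}$. Hence
\[
\bigl(1+xF(x,y;t)\bigr)\bigl(1+yF(x,y;t)\bigr)=\frac{(x-y)^2e^{(x+y)t}}{\bigl(xe^{yt}-ye^{xt}\bigr)^2}.
\]

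\textbf{Step 2: symmetrize the denominator.} Set $d=x-y$ and factor $e^{(x+y)t/2}$ out of $xe^{yt}-ye^{xt}$. This gives
\[
xe^{yt}-ye^{xt}=e^{(x+y)t/2}\bigl(xe^{-dt/2}-ye^{dt/2}\bigr)=e^{(x+y)t/2}\Bigl(d\cosh\tfrac{dt}{2}-(x+y)\sinh\tfrac{dt}{2}\Bigr).
\]
Therefore the product from Step~1 equals $\bigl(\cosh\frac{dt}{2}-\frac{x+y}{d}\sinh\frac{dt}{2}\bigr)^{-2}$. Raising to the power $\frac{\alpha+\beta}{2}$ gives the exponent $-(\alpha+\beta)$. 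Justifying this power formally uses the same device as in the proof of Theorem~\ref{thm:carbi}: write $f^\alpha=e^{\alpha\log f}$ for a series $f$ with constant term $1$. Here the bracket has constant term $1$ at $t=0$, so this applies.

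\textbf{Step 3: substitute and check well-definedness.} Use the relations $x+y=w+z$ and $xy=uv$. Then $d^2=(x+y)^2-4xy=(w+z)^2-4uv$, so $d=\sqrt{(w+z)^2-4uv}$. This yields the stated formula.

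The main, though mild, obstacle is that $x$, $y$ and $d$ are only implicitly defined from $u,v,w,z$, so the result must not depend on the choice of square root. This holds because $\cosh(dt/2)$ and $d^{-1}\sinh(dt/2)$ are even in $d$. Expanded, they are power series in $t$ whose coefficients are polynomials in $d^2=(w+z)^2-4uv$ and in $x+y=w+z$. The final expression is thus a genuine formal power series in $t$ over the polynomial ring in $u,v,w,z,\alpha,\beta$, and the identity of Theorem~\ref{thm:Carlitz-ji} transfers to it.
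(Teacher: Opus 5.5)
Your proposal is correct and follows essentially the paper's route. The paper gives no separate proof; it presents the result as a direct consequence of Theorem~\ref{thm:Carlitz-ji}, obtained exactly as you do: rewrite $(1+xF)(1+yF)=(x-y)^2e^{(x+y)t}/(xe^{yt}-ye^{xt})^2$ in hyperbolic form and substitute $(x-y)^2=(w+z)^2-4uv$. Your remarks on evenness in $d$ and on defining the power via $e^{\alpha\log f}$ properly handle the remaining formal points.
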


 When $\alpha=\beta=1$, Theorem \ref{thm:Carlitz-ji} 
 reduces to Theorem \ref{thm:4.9}. 
Setting \(\alpha = 0\) in (\ref{eq:4.49}),  we get 
\begin{align}\label{eq:2.24}
P_n(u,  v,  w,  z \mid 0,  \beta) = \sum_{\sigma\in S_n} (uv)^{L(\sigma)}w^{\lrda(\sigma)}z^{\dd( \sigma)}\beta^{\rlmin(\sigma)}.
\end{align}
 In view of  (\ref{eq:2.24}), a
 $\beta$-extension of Theorem \ref{thm2} 
 can be deduced from Theorem \ref{thm:Carlitz-ji-main2} by setting $\alpha=0$.

Gessel-Stanley \cite{Gessel-Stanley-1978}
introduced the notion of Stirling
permutations; see Section \ref{sec:9}. The Eulerian polynomials
(with respect to the number of descents)
over Stirling permutations are referred to as
the second-order Eulerian polynomials, which can
be generated by the following grammar 
\[   x \rightarrow xy^2,  \quad y \rightarrow xy^2 , \]
see \cite{Chen-Fu-2017}. The higher order 
Eulerian polynomials over $r$-Stirling
permutations can be generated by the grammar
\[ x \rightarrow xy^r,  \quad y \rightarrow 
xy^r. \]

\section{Grammar Assisted Bijections}

   In this section, 
   we use two examples to illustrate how
    grammars can be instrumental in constructing 
   bijections. The first example is a bijection
   between permutations and increasing 
   trees that maps descents of a permutation to 
   the leaves of an increasing tree.
   
   Dumont \cite{Dumont-1996} found the
   following grammar for increasing trees: 
\begin{align}
x_i \rightarrow x_0\,x_{i+1},   \quad i\geq 0,  
\label{Dumont-tree}
\end{align}
where $x_i$ is the label of a vertex
with outdegree $i$ (the number of its children). 
Let $T$ be an increasing tree on 
$\{0,  1,  2,  \dots,  n\}$. When the new vertex $n+1$ 
is attached to $T$, some labels will be updated.
Assume that $n+1$ is inserted to $T$ as a child of a vertex
$v$. If $v$ has outdegree $i$, then the label of $v$ will
be updated by $x_{i+1}$ and the leaf $n+1$ will be labeled
by $x_0$. This change of 
labels is captured by the rule  $x_i \to x_0 \,x_{i+1}$.
Let $D$ be the 
formal derivative of the grammar \eqref{Dumont-tree}. Dumont \cite{Dumont-1996} showed that 
\begin{equation}\label{tw}
D^n(x_0) = \sum_{T} x_{\deg_T(0)}\,
x_{\deg_T(1)} \, \cdots \,x_{\deg_T(n)},
\end{equation}
where the sum ranges over increasing trees $T$ on $\{0,  1,  2,  \dots,  n\}$  and $\deg_T(i)$ denotes 
the outdegree of $i$ in $T$.  

If we use different labels to distinguish
internal vertices and leaves of an increasing tree,
and, more precisely, use $x$ to label
an internal vertex and use $y$ to label
a leaf, then we are led to the grammar 
$$x \rightarrow xy,   \quad y \rightarrow xy,$$
which is exactly the grammar for the 
Eulerian polynomials. This means that
there is a combinatorial interpretation of 
the Eulerian polynomials 
in terms of increasing trees, see Dumont \cite{Dumont-1996}.
A generating function proof of this 
fact was given by 
Lin-Liu-Wang-Zang \cite{Lin-Liu-Wang-Zang-2024}.

\begin{theorem}[Dumont \cite{Dumont-1996}]
    For  $n \geq 1$,  
    the number of permutations of $[n]$ 
    with $k$ descents equals the 
    number of increasing trees on $\{0,   1,   2,   \ldots,   n\}$ with  $k$ leaves. 
\end{theorem}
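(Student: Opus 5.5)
We argue through the grammar. The plan is to show that the total weight of increasing trees on $\{0,1,\ldots,n\}$, where an internal vertex has weight $x$ and a leaf has weight $y$, equals $D^n(y)=A_n(x,y)$. Here $D$ is the formal derivative of the grammar (\ref{ge}). We then compare the exponent of $y$ with the combinatorial definition (\ref{defi:Eulerianpol}), in which $y$ marks descents; this is legitimate since $\des(\sigma)$ counts the final position because of the convention $\sigma_{n+1}=0$.

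First, take Dumont's expansion (\ref{tw}) and specialize $x_0\mapsto y$ and $x_i\mapsto x$ for all $i\ge 1$. The specialization is compatible with the grammar (\ref{Dumont-tree}). Indeed, $x_0\to x_0x_1$ becomes $y\to yx$, and $x_i\to x_0x_{i+1}$ for $i\ge1$ becomes $x\to yx$. These are precisely the rules $x\to xy$, $y\to xy$. Since the substitution is a ring homomorphism intertwining the two derivations on generators, it intertwines them everywhere. This follows by the Leibniz rule and induction on $n$, exactly as in the argument $D^n(y^\alpha)=D_E^n(a)$ in the proof of Theorem \ref{thm:carbi}. Hence $D^n(y)$ equals the image of $D^n(x_0)$, which is $\sum_T x^{\#\text{internal}(T)}\,y^{\#\text{leaves}(T)}$ over increasing trees $T$ on $\{0,\ldots,n\}$.

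Equivalently, one can argue directly by the labeling. Attaching $n+1$ to a leaf turns that leaf into an internal vertex and adds a new leaf, giving $y\to xy$. Attaching $n+1$ to an internal vertex keeps that vertex internal and adds a leaf, giving $x\to xy$. Every increasing tree on $\{0,\ldots,n+1\}$ arises exactly once, by deleting the leaf $n+1$. The base case $n=0$ is the single vertex $0$, which is a leaf of weight $y$, and this matches $D^0(y)=y$.

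Second, on the permutation side, the grammatical labeling (\ref{x8}) already shows that $D^n(y)=\sum_{\sigma\in S_n}x^{\asc(\sigma)}y^{\des(\sigma)}$, which is the equivalence of (\ref{eq:A_n}) and (\ref{defi:Eulerianpol}). Equating the two expressions for $D^n(y)$ and extracting the coefficient of $y^k$ gives the theorem, since the exponents of $x$ are forced: $\asc+\des=n$ and internal vertices plus leaves number $n+1$.

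The main point needing care is the compatibility check for the specialization. For the direct labeling it is the claim that the number of leaves changes exactly as the rules describe, in particular that the root $0$ counts as a leaf when $n=0$. This is routine. The remaining work, if a bijection is wanted rather than just equinumerosity, is to make the two recursive insertions correspond. Inserting $n+1$ into a $y$-slot (descent position) of $\sigma$ corresponds to hanging $n+1$ from a leaf of $T$, and inserting it into an $x$-slot corresponds to hanging it from an internal vertex. One fixes a consistent ordering of the slots labeled $x$ and the vertices labeled $x$, and likewise for $y$. This yields a grammar assisted bijection mapping descents to leaves.
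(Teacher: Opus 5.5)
Your proposal is correct and follows the paper's own argument: labeling internal vertices by $x$ and leaves by $y$ (equivalently, specializing Dumont's grammar (\ref{Dumont-tree}) via $x_0\mapsto y$, $x_i\mapsto x$ for $i\ge 1$) gives exactly the Eulerian grammar (\ref{ge}), which also governs the ascent/descent labeling of permutations. One harmless slip: with $\sigma_0=\sigma_{n+1}=0$ one has $\asc(\sigma)+\des(\sigma)=n+1$, not $n$ (indeed $A_n(x,y)$ is homogeneous of degree $n+1$, matching the $n+1$ tree vertices), but you do not need this anyway, since comparing coefficients of $y^k$ and setting $x=1$ already gives the count.
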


Guided by the grammar, it is easy to 
construct a bijection for the above correspondence.

Following this line of reasoning,
Chen-Fu \cite{Chen-Fu-2017} obtained the
following correspondence.  

\begin{theorem}[Chen-Fu \cite{Chen-Fu-2017}]
\label{thm:lpincrea} 
 The number of permutations
 of $[n]$ with $k$ left peaks equals 
 the number of increasing trees on $\{0,  1,  2,  \dots,  n\}$
 with $2k+1$ vertices of even degree. 
\end{theorem}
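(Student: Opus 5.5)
The plan is to argue grammatically, along the lines of the Dumont correspondence just discussed. The left peak side is already in place: for the formal derivative $D$ of the grammar \eqref{Grammar_left}, $G=\{x\to xy,\ y\to x^2\}$, we have $D^n(x)=L_n(x,y)=\sum_k L(n,k)\,x^{2k+1}y^{n-2k}$. It therefore suffices to find a labeling of increasing trees on $\{0,1,\dots,n\}$ that is governed by the same grammar. The exponent $2k+1$ of $x$ should record the number of vertices of even outdegree, and $y$ should mark vertices of odd outdegree. A tree on $n+1$ vertices then has weight $x^{e}y^{n+1-e}$, where $e$ is the number of even-degree vertices; this matches $x^{2k+1}y^{n-2k}$ exactly when $e=2k+1$.

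Concretely, label each vertex of even outdegree by $x$ and each vertex of odd outdegree by $y$. The tree with the single vertex $0$ has weight $x$, which agrees with $L_0(x,y)=x$. To pass from $n$ to $n+1$, attach the new vertex $n+1$ as a leaf (outdegree $0$, so it is labeled $x$) to some vertex $v$.
\begin{itemize}
\item If $v$ has even outdegree, its label $x$ becomes $y$, and a new $x$ appears. This gives the rule $x\to yx$.
\item If $v$ has odd outdegree, its label $y$ becomes $x$, and again a new $x$ appears. This gives the rule $y\to x^2$.
\end{itemize}
Every increasing tree on $\{0,\dots,n+1\}$ arises exactly once this way, by removing the leaf $n+1$. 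So the sum of weights over increasing trees on $\{0,\dots,n\}$ satisfies the same recursion as $D^n(x)$. By induction it equals $D^n(x)$, in the same way that Dumont's formula \eqref{tw} was justified.

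Comparing coefficients of $x^{2k+1}y^{n-2k}$ in the two expansions of $D^n(x)$ then gives the theorem. A parity check serves as a sanity test: the outdegrees sum to $n$, so the number of odd-degree vertices has the parity of $n$. Hence the number $e$ of even-degree vertices has the parity of $n+1-n$, so $e$ is always odd, consistent with the form $2k+1$.

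The main obstacle is not the tree side, which is mechanical, but making sure that $D^n(x)=L_n(x,y)$ may be taken as given. The paper states it with a grammatical labeling credited to \cite{Chen-Fu-2017}, so I will assume it. If it had to be supplied, I would label a permutation with $\sigma_0=0$ and $\sigma_{n+1}=0$ so that inserting $n+1$ into a position labeled $x$ or $y$ realizes $x\to xy$ and $y\to x^2$, tracking how left peaks change. For a bijective version, I would follow the grammar step by step: match insertions into $x$-slots of the permutation with attachments to even-degree vertices, and insertions into $y$-slots with attachments to odd-degree vertices, ordered consistently. This gives a grammar assisted bijection.
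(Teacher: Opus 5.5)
Your proposal is correct and is essentially the paper's argument. The paper specializes Dumont's increasing-tree grammar $x_i\to x_0x_{i+1}$ by setting $x_{2i}=x$ and $x_{2i+1}=y$, which yields $\{x\to xy,\ y\to x^2\}$, and compares with $D^n(x)=L_n(x,y)$; your parity labeling, checked directly on the insertion of the leaf $n+1$, is exactly this specialization, and it makes explicit why the substitution is compatible with $D$.
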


The grammatical justification takes only one line. 
In the grammar  (\ref{Dumont-tree}),   
setting $x_{2i} = x$ and $x_{2i+1} = y$,  we 
are led to the 
grammar (\ref{Grammar_left}) for the
number of left peaks of a permutation.
This gives a confirmation of Theorem 
\ref{thm:lpincrea}. A grammar assisted
bijection is provided by Chen-Fu \cite{Chen-Fu-2023-JACO}. 
 
As for exterior peaks,  Chen-Fu \cite{Chen-Fu-2023-JACO} 
obtained the following correspondence and found a 
grammar assisted bijection.

\begin{theorem}[Chen-Fu \cite{Chen-Fu-2023-JACO}]
For \(n\ge1\), there is a 
bijection from the set of permutations of 
\([n]\) to the set of increasing trees 
on \(\{0,1,\ldots,n\}\) that maps the
number $k$ of exterior peaks of a permutation
to the number $j$ of nonroot vertices of even degree
such that  $k = \lrf{(j + 1) / 2}$.
\end{theorem}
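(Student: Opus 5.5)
The plan is to let the grammar do the counting first, and then read off the bijection by matching labels step by step, in the spirit of the grammar assisted bijections above. The first task is to refine Dumont's grammar \eqref{Dumont-tree} so that it separates the root from the other vertices. For an increasing tree on $\{0,1,\ldots,n\}$, we label the root $0$ by $a$ if its outdegree is even and by $b$ if it is odd. We label a nonroot vertex by $x$ if its outdegree is even and by $y$ if it is odd. Attaching $n+1$ as a child of a vertex flips the parity of that vertex and creates a new leaf, which has even degree and so gets the label $x$. This gives the production rules
\[ a\rightarrow bx,\quad b\rightarrow ax,\quad x\rightarrow xy,\quad y\rightarrow x^2 . \]
Starting from the single vertex $0$, labeled $a$, we see that $D^n(a)$ is the sum of the weights of all increasing trees on $\{0,\ldots,n\}$. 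A tree with $j$ nonroot vertices of even degree has weight $c\,x^{j}y^{n-j}$, where $c\in\{a,b\}$. A parity count fixes $c$: the outdegrees sum to $n$, and the nonroot contribution is $\equiv n-j \pmod 2$, so the root degree is $\equiv j$. Hence $c=a$ when $j$ is even and $c=b$ when $j$ is odd.

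The key observation is that the substitution $a=y$, $b=x$ is consistent with the grammar \eqref{Grammar_left}. Indeed $D(y)=x^2=b\,x$ and $D(x)=xy=a\,x$ under this substitution, so by induction $D^n(a)$ becomes $D^n(y)=W_n(x,y)$. After substitution, a tree with $j=2k$ has weight $y\,x^{2k}y^{n-2k}=x^{2k}y^{n-2k+1}$, and a tree with $j=2k-1$ has weight $x\cdot x^{2k-1}y^{n-2k+1}=x^{2k}y^{n-2k+1}$. Comparing coefficients with $W_n(x,y)=\sum_k W(n,k)x^{2k}y^{n-2k+1}$, we find that $W(n,k)$ equals the number of increasing trees with $j\in\{2k-1,2k\}$, that is, with $k=\lfloor (j+1)/2\rfloor$. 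This proves the equinumerosity.

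To upgrade this to a bijection, I would use the grammatical labeling of permutations for $W_n(x,y)=D^n(y)$ under \eqref{Grammar_left}, as in Chen-Fu \cite{Chen-Fu-2017}. In that labeling, the positions of $\sigma$ carry labels $x$ or $y$ whose exponents record exterior peaks, and inserting $n+1$ at a labeled position realizes one rule of \eqref{Grammar_left}. The bijection is then built recursively. Under the correspondence, the labels of the permutation and of the tree agree after identifying $a=y$ and $b=x$ for the root label. Inserting $n+1$ at a given labeled position of $\sigma$ corresponds to attaching $n+1$ to the tree vertex carrying the matching label. For this we need a fixed matching, for instance by ordering occurrences of each label on both sides and matching the $i$-th with the $i$-th.

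The main obstacle is maintaining this label-matching invariant through each insertion. Each rule replaces one label by two, and we must specify how the new labels are ordered on both sides so that the matching stays canonical and the map is invertible. The root labels $a,b$ need extra care, because on the permutation side they correspond to ordinary $y$- and $x$-positions. I would first try a canonical left-to-right order on the permutation side and a preorder on the tree side, and then check invertibility by removing the largest element.
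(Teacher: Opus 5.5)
Your argument is correct and is essentially the paper's grammatical route, namely the exterior-peak analogue of the one-line specialization of \eqref{Dumont-tree} used for left peaks: reading outdegrees by parity, with the root's convention reversed (your $a=y$, $b=x$), turns the tree grammar into \eqref{Grammar_left}, and $D^n(y)=W_n(x,y)$ then shows that for each $k$ there are equally many permutations with $k$ exterior peaks and trees with $j\in\{2k-1,2k\}$. Since equal cardinalities for every $k$ already give the required bijection, your last two paragraphs are not logically needed, and the obstacle you anticipate for an explicit grammar assisted bijection does not really arise: label the root directly by $y$ (even) or $x$ (odd), and send the insertion of $n+1$ into the $i$-th gap from the left carrying a label $\ell$ to the attachment of $n+1$ to the $i$-th smallest vertex carrying $\ell$; this is well defined because both sides obey the same grammar, so their weights stay equal at every step, and it is inverted by deleting $n+1$.
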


There are two kinds of runs of a permutation.
First, after patching a zero to the
beginning of a permutation $\sigma=\sigma_1\sigma_2 \cdots \sigma_n$, that is, setting $\sigma_0 = 0$,
define an up-down run to be 
a maximal ascending segment or a maximal
descending segment (of consecutive elements). 
If we drop the assumption that $\sigma_0 = 0$, 
a maximal ascending or descending segment
is called an alternating run. For example,
the permutation $21$ has two up-down runs $0\,  2$ and $2\,  1$),  but it has only one alternating run $2\,  1$. 

Using a recurrence relation, 
Ma  \cite{Ma-2013} discovered a grammar for up-down runs,
that is, 
\[G=\{a \to ax,  \; x \to xy,  \; y \to x^2\}.\]
A grammatical labeling for up-down runs 
was found by 
Chen-Fu \cite{Chen-Fu-2023-JACO}.
This labeling scheme demonstrates 
how an insertion of the element
$n+1$ to a permutation of $[n]$ affects 
the number of up-down runs. This labeling
gives a clue to establish
a correspondence between 
permutations and increasing trees 
that maps the number of up-down runs to
the number of vertices in an increasing
tree subject to a certain degree constraint.

\begin{theorem}[Chen-Fu \cite{Chen-Fu-2023-JACO}]
   For  $n \ge 1$,   there is a bijection
   from the set of permutations of $[n]$ 
   to the set of increasing trees on 
   $\{0,   1,   \dots,   n\}$ 
   that maps the number of up-down
   runs of a permutation 
   to the number of nonroot vertices
   of even degree.  
\end{theorem}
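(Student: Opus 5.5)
The plan is to compare two grammars and let the grammatical labelings of the two sides dictate the bijection.

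\emph{Permutation side.} Start from Ma's grammar $G=\{a\to ax,\ x\to xy,\ y\to x^2\}$ with the grammatical labeling of Chen-Fu. For $\sigma\in S_n$ with $\sigma_0=0$, the whole permutation carries the label $a$. Each of the $n+1$ insertion positions carries $x$ or $y$, chosen so that inserting $n+1$ at a position labeled $x$ (or at the distinguished position governed by $a$) increases the number of up-down runs by one, while inserting at a $y$-position leaves it unchanged. One then checks, by induction on $n$, that the weight of $\sigma$ is $a\,x^{\mathrm{run}(\sigma)}y^{n-\mathrm{run}(\sigma)}$, up to the fixed normalisation of exponents, so that the total weight of $S_n$ is $D_G^n(a)$.

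\emph{Tree side.} Specialize Dumont's grammar \eqref{Dumont-tree}:
\begin{itemize}
\item label the root $0$ by $a$ regardless of its degree;
\item label a nonroot vertex of even outdegree by $x$;
\item label a nonroot vertex of odd outdegree by $y$.
\end{itemize}
Attaching $n+1$ to the root creates a new leaf, which has degree $0$ and so is labeled $x$; this is the rule $a\to ax$. Attaching $n+1$ to a nonroot vertex of even degree turns that vertex odd and adds a leaf, giving $x\to yx=xy$. Attaching it to a vertex of odd degree makes the vertex even and adds a leaf, giving $y\to x^2$. Hence the weight generating function of increasing trees on $\{0,\dots,n\}$ is also $D_G^n(a)$. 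Equating the two coefficients of $x^k$ shows that up-down runs and nonroot vertices of even degree are equidistributed.

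\emph{Bijection.} Build it recursively so that labels are matched at every step. Maintain, for each $n$, a bijection $\phi_n$ together with a label-preserving correspondence between:
\begin{itemize}
\item the $n+1$ insertion positions of $\sigma$, namely the $a$-slot and the $x$- and $y$-labeled slots;
\item the $n+1$ vertices of $T=\phi_n(\sigma)$, namely the root, the even nonroot vertices and the odd nonroot vertices.
\end{itemize}
Inserting $n+1$ into slot $s$ of $\sigma$ then corresponds to attaching $n+1$ to the matched vertex of $T$. Both operations apply the same production rule, so the resulting label multisets agree. Afterwards, the correspondence between the new slots and the new vertices must be extended consistently, pairing the two new $x$'s (or the $x,y$) on each side by a fixed rule. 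Injectivity, and hence bijectivity by counting, follows because every permutation and every tree arises uniquely from the insertion process.

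\emph{Main obstacle.} The hard step is to make the labeling of up-down runs genuinely context-free: one must verify that inserting $n+1$ at each slot changes the labels exactly as the rules $x\to xy$ and $y\to x^2$ say, and that the neighbouring slots keep their labels. The boundary cases are the most delicate: inserting $n+1$ at the front (next to $\sigma_0=0$), at the end, and at peaks versus valleys. I would first try the rule ``a slot is labeled $x$ if it lies inside a run whose direction would be broken by $n+1$,'' check it against small cases $n\le 3$, and then use it to fix the matching convention that the bijection needs.
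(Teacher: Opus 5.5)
\textbf{Verdict.} Your route is the paper's: Ma's grammar $\{a\to ax,\ x\to xy,\ y\to x^2\}$ realized on permutations by a grammatical labeling, Dumont's grammar \eqref{Dumont-tree} specialized on trees, and a grammar assisted bijection. Your tree side is correct as written. The gap is the permutation labeling. It is the only nontrivial step, you leave it open, and the specification you give for it is impossible.

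\textbf{Why the stated labeling fails.} With weight $a\,x^{\mathrm{run}(\sigma)}y^{n-\mathrm{run}(\sigma)}$, the rule $x\to xy$ keeps the number of $x$'s. So inserting $n+1$ at an $x$-slot must leave the number of up-down runs unchanged. The rule $y\to x^2$ forces an increase by two at a $y$-slot, and only the single $a$-slot gives an increase by one. Already for $\sigma=12$, inserting $3$ in the three slots gives $312$, $132$, $123$, with changes $+2,+1,0$; your description would need two slots of effect $+1$.

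The forced labels are also not local. The slot between $1$ and $2$ has effect $+1$ in $12$ (label $a$) but effect $+2$ in $123$ (label $y$). So your planned check that ``neighbouring slots keep their labels'' would fail. The slot--vertex correspondence therefore cannot simply be extended by pairing the two new slots.

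\textbf{How to close the gap.} Set $\sigma_0=0$. Label slot $i$ ($0\le i\le n$, slot $n$ at the end) by $a$, $x$ or $y$ according as inserting $n+1$ there changes the number of up-down runs by $+1$, $0$ or $+2$. A case check on the directions of the steps adjacent to the slot shows three things.
\begin{itemize}
\item These are the only possible changes.
\item The $+1$ slot is unique: it is slot $n-1$ if $\sigma_{n-1}<\sigma_n$, and slot $n$ otherwise.
\item The $0$-slots are the two slots flanking each left peak, plus slot $n$ when $\sigma_{n-1}<\sigma_n$.
\end{itemize}
Turning points alternate and begin with a peak, so the number of $0$-slots is exactly $\mathrm{run}(\sigma)$. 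Hence the numbers of $a$, $x$, $y$ labels are $1$, $\mathrm{run}(\sigma)$, $n-\mathrm{run}(\sigma)$. Consequently the label multiset transforms by the production rule of whichever slot is used.

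\textbf{Building the bijection.} At each step, fix any canonical label-preserving bijection $\beta_\sigma$ from the slots of $\sigma$ to the vertices of $\phi_n(\sigma)$. It exists because the two label multisets agree by induction. Let $\phi_{n+1}$ send the insertion of $n+1$ at slot $s$ to the tree obtained by attaching $n+1$ to $\beta_\sigma(s)$. The map $(\sigma,s)\mapsto(\phi_n(\sigma),\beta_\sigma(s))$ is a bijection, and the same production acts on both sides. Therefore $\phi_{n+1}$ is a bijection carrying up-down runs to nonroot vertices of even degree.
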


The second example exhibits a connection
between a grammar of Dumont and a theorem
of Diaconis-Evans-Graham on permutation statistics. 
Let $\sigma = \sigma_1 \cdots \sigma_n \in S_n$. 
An index $1 \leq i \leq n$ 
is called an excedance if $\sigma_i > i$,
a drop if $\sigma_i < i$, and a fixed 
point if $\sigma_i = i$. Obviously,
$n$ cannot be an excedance and $1$ cannot be a drop. 
The numbers of excedances, drops and 
fixed points of $\sigma$ are denoted by 
$\exc(\sigma)$,  $\drop(\sigma)$ and $\fix(\sigma)$, 
respectively. A drop is also called an anti-excedance.

Dumont \cite{Dumont-1996} obtained the
following grammar for the joint distribution 
$(\exc,   \drop,   \fix)$ over permutations:
\begin{align}
\label{gram:dumont}
G = \{a \to az,   z \to xy,   x \to xy,   y \to xy\}.
\end{align}
Define
$$F_n(x,   y,   z) = \sum_{\sigma \in S_n} x^{\exc(\sigma)} y^{\drop(\sigma)} z^{\fix(\sigma)},  $$ 
and let $F_0(x,   y,   z) = 1$.

\begin{theorem}[Dumont \cite{Dumont-1996}]
  Let $D$ be the formal derivative of 
  the grammar (\ref{gram:dumont}). For $n \ge 0$,   we have
  \[ D^n(a) = a F_n(x,   y,   z).\] 
\end{theorem}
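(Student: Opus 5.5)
The plan is to give a grammatical labeling of permutations in cycle notation under which the insertion of $n+1$ is governed exactly by the rules of (\ref{gram:dumont}), and then to argue by induction on $n$.

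\textbf{The labeling.} For $\sigma\in S_n$, written as a product of disjoint cycles, we label each element $i\in[n]$ as follows:
\begin{itemize}
\item by $x$ if $i$ is an excedance, that is, $\sigma(i)>i$;
\item by $y$ if $i$ is a drop, that is, $\sigma(i)<i$;
\item by $z$ if $i$ is a fixed point.
\end{itemize}
We also place one extra label $a$ on the permutation as a whole. The weight of $\sigma$ is then $a\,x^{\exc(\sigma)}y^{\drop(\sigma)}z^{\fix(\sigma)}$. For $n=0$ the empty permutation has weight $a=D^0(a)$, which starts the induction.

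\textbf{Recursive construction.} Every permutation of $[n+1]$ arises uniquely from a permutation $\sigma$ of $[n]$ in one of two ways.
\begin{itemize}
\item Adjoin $n+1$ as a new fixed point $(n+1)$.
\item Choose an element $i\in[n]$ and insert $n+1$ immediately after $i$ in its cycle, so that $i\mapsto n+1\mapsto \sigma(i)$.
\end{itemize}
The inverse map deletes $n+1$ from its cycle. So the insertion sites correspond bijectively to the label $a$ together with the $n$ labels carried by the elements of $[n]$. It remains to check how the labels change at each kind of site.
\begin{itemize}
\item New fixed point. The element $n+1$ gets label $z$ and nothing else changes, which is the rule $a\to az$.
\item $i$ was a fixed point. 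The cycle becomes $(i\; n+1)$, so $i$ is now an excedance and $n+1$ is a drop. This is $z\to xy$.
\item $i$ was an excedance or a drop. Now $\sigma'(i)=n+1>i$, so $i$ becomes, or stays, an excedance. Also $\sigma'(n+1)=\sigma(i)<n+1$, so $n+1$ is a drop. This is $x\to xy$ and $y\to xy$.
\end{itemize}
In every case, no element other than $i$ and $n+1$ changes status, because only the image of $i$ is altered.

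\textbf{Induction step.} Assume $D^n(a)$ equals the total weight of $S_n$. Applying $D$ to the weight of $\sigma$ gives, by the Leibniz rule, the sum over all labels of the weight with that one label replaced by its production. By the case analysis above, these terms are exactly the weights of the permutations of $[n+1]$ obtained from $\sigma$, each counted once. Summing over $\sigma\in S_n$ gives $D^{n+1}(a)=aF_{n+1}(x,y,z)$.

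The only delicate point is confirming that each insertion changes the status of $i$ and $n+1$ alone, and that the insertion sites are in bijection with the labels. Both follow directly from the fact that inserting $n+1$ after $i$ modifies the image of $i$ and nothing else.
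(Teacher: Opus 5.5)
Your proof is correct. The labeling (each $i\in[n]$ gets $x$, $y$ or $z$ according to whether it is an excedance, a drop or a fixed point, plus a global label $a$) is combined with the insertion of $n+1$, either as a new fixed point or immediately after some $i$ in its cycle. Your case analysis shows this reproduces exactly the rules $a\to az$, $z\to xy$, $x\to xy$, $y\to xy$.

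The paper states this theorem only by citation to Dumont and gives no proof. Your argument is the standard grammatical-labeling argument that the paper's framework relies on (compare the cycle-notation labeling in the section on grammatical labelings), so nothing needs to be added.
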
 

It should be noted that the
work of Roselle   \cite{Roselle-1968} was
not accurately cited   by 
Dumont \cite{Dumont-1996}. However,
the pointer to the work of Roselle turned out
to be unexpectedly informative. Namely, Roselle \cite{Roselle-1968} in fact investigated the
 joint distribution of the number of
 ascents and the number of successions.
 
 Assume that $n\geq 1$ and $\sigma= 
\sigma_1 \sigma_2\cdots \sigma_n$ is
a permutation of $[n]$.
An index $1\leq i \leq n-1$ is called a succession if $\sigma_{i+1}=\sigma_{i}+1$ and the number of
successions of $\sigma$ is denoted by $\suc(\sigma)$. 
Roselle found the generating function for
the joint distribution of $(\asc,  \suc)$ over
permutations. A related statistic is called the 
big ascent by Ma-Qi-Yeh-Yeh \cite{MQYY-2025}.
If $1\leq i \leq n-1$ and $\sigma_{i} +2 
\leq \sigma_{i+1}$,  then $i$ is called a big ascent of
$\sigma$. The number of big ascents of $\sigma$
is denoted by $\basc(\sigma)$. The generating function of the
joint distribution of $(\basc,  \des,  \suc)$ over
permutations has been derived in 
\cite{MQYY-2025}. 

While the grammar of Dumont does not
seem to apply to the joint 
distribution of $(\asc, \suc)$,  Chen-Fu \cite{Chen-Fu-2024} 
observed that  it instead applies 
to the joint distribution of $(\exc, \fix)$, as 
well as to the joint distribution of $(\jump, \lsuc)$, where
the statistics  $\jump$ and $\lsuc$ are defined as follows.
For a permutation $\sigma
=\sigma_1\sigma_2\cdots\sigma_n$,  assume that $\sigma_0=0$.
 An index $ 1\leq i \leq n $ is called a left 
 succession if $\sigma_{i-1}+1=
\sigma_{i}$, and a jump if 
$\sigma_{i-1}+2 \leq \sigma_i$. The numbers of
left successions and jumps of $\sigma$ are denoted by
$\lsuc(\sigma)$ and $\jump(\sigma)$, respectively. 

For  $n \ge 1$ and a permutation $\sigma \in S_n$,  define
\begin{align}
    S(\sigma) &= \{i \mid 1 \leq i \leq n-1,   \sigma_i + 1 = \sigma_{i+1}\},  \\
    G(\sigma) &= \{i \mid 1 \leq i \leq n-1,   \sigma_i = i\},  \\
    F(\sigma) &= \{i \mid 1 \leq i \leq n,   \sigma_i = i\}.
\end{align}
It should be noted that the index $n$ is excluded 
in the definition of $G(\sigma)$.
Given a subset $I \subseteq [n-1]$,  let $M_n(I)$ 
denote the set of permutations $\sigma \in S_n$
such that $S(\sigma) = I$, and  let 
$G_n(I)$ denote the set of permutations 
$\sigma \in S_n$ such that $G(\sigma) = I$,
and let 
$F_n(I)$ denote the set of permutations
$\sigma \in S_n$ such that $F(\sigma) = I$.
  Diaconis-Evans-Graham \cite{Diaconis-Evans-Graham} 
  discovered the following correspondence.

\begin{theorem}[Diaconis-Evans-Graham \cite{Diaconis-Evans-Graham}]
\label{DEG}
Let $n \geq 1$ and $I \subseteq [n-1]$. 
Then there exists a one-to-one correspondence 
between $M_n(I)$ and $G_n(I)$. 
\end{theorem}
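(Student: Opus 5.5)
The plan is to prove the equality $|M_n(I)|=|G_n(I)|$ by inclusion--exclusion; a one-to-one correspondence then exists trivially. Afterwards I would indicate how the grammatical viewpoint of this section turns the count into an explicit grammar assisted bijection.

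\textbf{Superset counts.} For $J\subseteq[n-1]$, set
\[ \widetilde M_n(J)=\{\sigma\in S_n : S(\sigma)\supseteq J\}, \qquad \widetilde G_n(J)=\{\sigma\in S_n : G(\sigma)\supseteq J\}. \]
The key step is to show that both sets have cardinality $(n-|J|)!$.

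For $\widetilde G_n(J)$ this is immediate. The condition $G(\sigma)\supseteq J$ says exactly that $\sigma_i=i$ for every $i\in J$. The remaining $n-|J|$ values may be arranged arbitrarily in the remaining positions.

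For $\widetilde M_n(J)$ I would use a compression map. Given $\sigma$ with $\sigma_{i+1}=\sigma_i+1$ for all $i\in J$, delete the entries $\sigma_{i+1}$ for $i\in J$ and standardize the result to a permutation of $[n-|J|]$. The positions in $J$ split $\sigma$ into maximal segments, each consisting of consecutive increasing values. Each segment is recorded in the compressed word by its first entry, and its length is determined by $J$ alone. So the inverse map is well defined: expand each letter into a run of the prescribed length and relabel so that the runs occupy consecutive value intervals in the order of their leading letters. This gives a bijection between $\widetilde M_n(J)$ and $S_{n-|J|}$.

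\textbf{Inversion.} Since
\[ |\widetilde M_n(J)|=\sum_{I\supseteq J}|M_n(I)|, \qquad |\widetilde G_n(J)|=\sum_{I\supseteq J}|G_n(I)|, \]
Möbius inversion on the Boolean lattice of subsets of $[n-1]$ gives
\[ |M_n(I)|=\sum_{J\supseteq I}(-1)^{|J|-|I|}(n-|J|)! = |G_n(I)|. \]
This proves the theorem.

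\textbf{Grammatical version.} In the spirit of this section I would also aim for a grammar assisted bijection, built as follows.
\begin{itemize}
\item Label the positions of $\sigma$ according to whether they are in $S(\sigma)$, or respectively in $G(\sigma)$.
\item Insert $n+1$ recursively and verify that both families are governed by the same grammar, namely a variant of Dumont's grammar (\ref{gram:dumont}), with $z$ marking successions or fixed points.
\item Match the insertion steps label by label.
\end{itemize}
This is the step I expect to be the main obstacle. Inserting $n+1$ can destroy a succession or a fixed point, and the index $n$ is excluded from $G(\sigma)$, so the boundary cases must be handled with care. In particular, one has to check that the weights tracked by the two labelings really coincide at the level of index sets $I$, and not merely of their cardinalities. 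If this refinement fails, the inclusion--exclusion argument above still establishes the theorem.
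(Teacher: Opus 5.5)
Your proof is correct, and it takes a genuinely different route from the one the paper points to.

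\textbf{What the paper does.} The paper gives no proof of its own. It cites Diaconis--Evans--Graham and then rewrites the statement via inverses, using $\overline{S}(\sigma^{-1})=S(\sigma)$ and $G(\sigma^{-1})=G(\sigma)$, as a correspondence between $\overline{S}_n(I)$ and $G_n(I)$. It places this in the setting of Dumont's grammar for $(\exc,\drop,\fix)$. There Chen--Fu construct a grammar assisted bijection for the left-succession analogue $\overline{L}_n(I)\to F_n(I)$, which also carries $(\jump,\des-1)$ to $(\exc,\drop)$.

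\textbf{What your argument does.} Your inclusion--exclusion proof is shorter and self-contained. Both superset counts equal $(n-|J|)!$, and your run-compression is a valid bijection between $\{\sigma: S(\sigma)\supseteq J\}$ and $S_{n-|J|}$. M\"obius inversion on the subsets of $[n-1]$ then gives the explicit common value $\sum_{J\supseteq I}(-1)^{|J|-|I|}(n-|J|)!$. As far as I recall, this enumerative argument is essentially one of the proofs in the original Diaconis--Evans--Graham paper. What it does not provide is an explicit correspondence, or any control of further statistics; that is exactly what the grammatical route buys.

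\textbf{On your grammatical sketch.} The main obstacle is not only the boundary cases. Inserting $n+1$ into the one-line word shifts positions, so a labeling of the positions in $S(\sigma)$ does not evolve in a context-free way. This is why the paper passes to $\sigma^{-1}$. Value-based successions $\overline{S}$, like fixed points, are never relabeled when the largest element is inserted:
\begin{itemize}
\item a succession with value $k$ is destroyed only by inserting $n+1$ between $k$ and $k+1$;
\item a fixed point $k$ is destroyed only by inserting $n+1$ after $k$ in cycle notation;
\item the only new ones are created at the top value.
\end{itemize}
Hence the two insertion processes can be matched value by value. The exclusion of the index $n$ from $G$ still requires bookkeeping, and Chen--Fu's $\overline{L}$ versus $F$ formulation avoids it. Finally, Dumont's grammar as written records only $|I|$. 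The set-level statement therefore needs the matched insertions themselves, not merely equality of generating polynomials; you already flag this in your last paragraph.
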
 

For $n \geq 1$ and $\sigma \in S_n$, define
$$\overline{S}(\sigma) = \{\sigma_i \mid 1 \leq i \leq n - 1,   \sigma_i + 1 = \sigma_{i+1}\}.$$
It is easily seen that for any $\sigma \in S_n$,
\begin{align}
    \overline{S}(\sigma^{-1}) &= S(\sigma),   \\
    G(\sigma^{-1}) &= G(\sigma),  \\
    F(\sigma^{-1}) &= F(\sigma),  
\end{align}
where $\sigma^{-1}$ stands for the inverse permutation
$\sigma$. 
For  $n \geq 1$ and
a subset $I \subseteq [n-1]$,  let
$\overline{S}_n(I)$ denote the set of permutations 
$\sigma \in S_n$ such that
$\overline{S}(\sigma) = I$. Then Theorem \ref{DEG} can be
restated as follows, see Chen-Fu \cite{Chen-Fu-2024}.

\begin{theorem} 
   Let $n \geq 1$ and $I \subseteq [n-1]$. 
   There exists a bijection
   between  $\overline{S}_n(I)$ and $G_n(I)$. 
\end{theorem}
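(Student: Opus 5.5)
The plan is to deduce the statement directly from Theorem \ref{DEG} by inversion of permutations. The identities $\overline{S}(\sigma^{-1}) = S(\sigma)$ and $G(\sigma^{-1}) = G(\sigma)$ recorded just above are the only ingredients. Fix $n\ge 1$ and $I\subseteq[n-1]$, and let $\phi\colon M_n(I)\to G_n(I)$ be the bijection supplied by Theorem \ref{DEG}.

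\textbf{Step 1: inversion maps $M_n(I)$ onto $\overline{S}_n(I)$.} For $\sigma\in S_n$ we have $\overline{S}(\sigma^{-1})=S(\sigma)$. Hence $\sigma\in M_n(I)$ if and only if $\sigma^{-1}\in\overline{S}_n(I)$. Since inversion is an involution on $S_n$, the map $\sigma\mapsto\sigma^{-1}$ is a bijection from $M_n(I)$ onto $\overline{S}_n(I)$.

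To check the identity, note that $i\in S(\sigma)$ means $\sigma_{i+1}=\sigma_i+1$. Put $\tau=\sigma^{-1}$ and $j=\sigma_i$. Then $\tau_j=i$ and $\tau_{j+1}=i+1$, so $\tau_j+1=\tau_{j+1}$, and the value $\tau_j=i$ lies in $\overline{S}(\tau)$. The argument reverses, and the index ranges match: $1\le i\le n-1$ corresponds to $1\le j\le n-1$.

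\textbf{Step 2: compose.} The composite
\[
\psi\colon \overline{S}_n(I)\to G_n(I),\qquad \psi(\tau)=\phi(\tau^{-1}),
\]
is a bijection, being a composition of two bijections. This already proves the theorem.

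\textbf{Optional symmetric form.} Since $G(\sigma^{-1})=G(\sigma)$, inversion also preserves $G_n(I)$. So one could equally use $\tau\mapsto\phi(\tau^{-1})^{-1}$, which makes the restatement symmetric under inversion.

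There is no real obstacle here. The only care needed is checking the index conventions in Step 1: $S(\sigma)$ records positions while $\overline{S}(\sigma)$ records values, and both range over $[n-1]$. If instead an explicit, grammar assisted bijection is wanted, as in Chen-Fu \cite{Chen-Fu-2024}, that would be a genuinely harder task. It would require labelings of permutations by the statistics $\overline{S}$ and $G$ that both realize Dumont's grammar \eqref{gram:dumont}, refined to track the set $I$ and not just its cardinality. The grammar itself records only counts, so this refinement is where I would expect the difficulty to lie.
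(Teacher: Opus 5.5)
Your proof is correct and takes the same approach as the paper. The paper likewise obtains this statement as a restatement of the Diaconis--Evans--Graham theorem via the identity $\overline{S}(\sigma^{-1})=S(\sigma)$, so composing their bijection with inversion is the intended argument, and your check of that identity, including the index ranges, is accurate.
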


Chen-Fu \cite{Chen-Fu-2024} observed the
connection between the grammar \eqref{gram:dumont} of Dumont \cite{Dumont-1996} and Theorem \ref{DEG} of Diaconis-Evans-Graham, and obtained the
left succession analogue along with 
a grammatical labeling and a grammar assisted bijection.

Analogous to  $\overline{S}(\sigma)$,   for  $n \geq 1$ and a 
permutation $\sigma$ of $[n]$,  define
$$\overline{L}(\sigma) = \{\sigma_i \mid 1 \leq i \leq n,   \sigma_{i-1} + 1 = \sigma_i\}.$$
Moreover, for a subset $I$ of $[n]$,  define
$\overline{L}_n(I)$ to be the
set of permutations $\sigma$ of $[n]$
such that $\overline{L}(\sigma) = I$. 

\begin{theorem}[Chen-Fu \cite{Chen-Fu-2024}]
Let $n \geq 1$ and $I \subseteq [n]$. 
There exists a bijection $\phi$
from $\overline{L}_n(I)$ to  $F_n(I)$ such that
$(\jump(\sigma),   \des(\sigma)-1) = (\exc(\phi(\sigma)),   \drop(\phi(\sigma)))$ for any permutation $\sigma$ of $[n]$. 
\end{theorem}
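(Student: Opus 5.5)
We will prove the statement by giving a grammatical labeling of permutations, written in one-line form, that is governed by Dumont's grammar \eqref{gram:dumont}. The bijection $\phi$ will then be the grammar assisted bijection obtained by running this labeling in parallel with Dumont's labeling of permutations in cycle notation.

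\emph{Dumont's labeling.} Write a permutation in cycle notation. Label a fixed point by $z$, an excedance by $x$ and a drop by $y$, and place a label $a$ outside the cycles. Insert $n+1$ either as a new cycle, or immediately after an element $i$ inside its cycle.
\begin{itemize}
\item A new cycle $(n+1)$ is a new fixed point, giving $a\to az$.
\item If $i$ was a fixed point, $i$ becomes an excedance and $n+1$ becomes a drop, giving $z\to xy$.
\item If $i$ was an excedance or a drop, the index $i$ keeps its type and $n+1$ is a drop. Relabelling appropriately gives $x\to xy$ and $y\to xy$.
\end{itemize}
Hence $D^n(a)=a\sum_{\sigma\in S_n} x^{\exc(\sigma)}y^{\drop(\sigma)}z^{\fix(\sigma)}$. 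Moreover, the fixed point set changes exactly by adding $n+1$ (rule $a$) or deleting the affected fixed point $i$ (rule $z$).

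\emph{One-line labeling.} Let $\sigma=\sigma_1\cdots\sigma_n$ with $\sigma_0=0$, and consider the $n+1$ gaps (before each $\sigma_i$, and at the end). Put $a$ in the gap immediately after the element $n$; this is either the end gap or the gap before the successor of $n$. Label every other gap before $\sigma_i$ by $z$ if $\sigma_{i-1}+1=\sigma_i$, by $x$ if $\sigma_{i-1}+2\le\sigma_i$, and by $y$ if $\sigma_{i-1}>\sigma_i$. If $\sigma_n\ne n$, the end gap is a descent (to the appended $0$) and is labelled $y$.

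Since the descent following $n$ always exists and is the one carrying $a$, the weight of $\sigma$ is $a\,x^{\jump(\sigma)}y^{\des(\sigma)-1}z^{\lsuc(\sigma)}$. The $z$-gaps are indexed precisely by the values in $\overline{L}(\sigma)$, including $\sigma_1=1$ when it occurs.

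Now insert $n+1$ into a gap.
\begin{itemize}
\item At the $a$-gap, we create the succession $n,n+1$ (label $z$), and the new $a$ sits after $n+1$: $a\to az$.
\item At a $z$-, $x$- or $y$-gap between $p$ and $q$, the gap $p\to n+1$ becomes a jump $x$, and the gap after $n+1$ becomes the new $a$. The old $a$-gap after $n$ is still a descent and becomes $y$. This gives $z\to xy$, $x\to xy$ and $y\to xy$.
\end{itemize}
Each permutation of $[n+1]$ arises exactly once. Also, $\overline{L}$ changes exactly by adding $n+1$ (rule $a$) or deleting the value $q$ of the broken succession (rule $z$). Induction therefore gives $D^n(a)=a\sum_{\sigma} x^{\jump}y^{\des-1}z^{\lsuc}$, refined by the set $\overline{L}$ versus the set $F$.

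\emph{The bijection.} Define $\phi$ recursively, with $\phi(1)=1$. Given $\phi(\sigma)=\tau$ and a choice of insertion in $\sigma$, perform the insertion in $\tau$ that uses the same production rule on a matched label. The $a$-gap of $\sigma$ corresponds to the new-cycle option for $\tau$. The $z$-gap before value $k\in\overline{L}(\sigma)$ corresponds to the fixed point $k$ of $\tau$. The $x$-labels of $\sigma$ are matched to the $x$-labels of $\tau$ in increasing order of a canonical position (left to right in $\sigma$, increasing index in $\tau$), and likewise for the $y$-labels; the equal counts are guaranteed by induction.

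The main point to check, and the expected obstacle, is that this matching respects sets rather than just counts. A $z$-insertion kills value $k$ in $\overline{L}(\sigma)$ exactly when it kills fixed point $k$ in $\tau$, and an $a$-insertion adds $n+1$ to both. Hence $\overline{L}(\sigma)=F(\phi(\sigma))$ is maintained inductively. Invertibility follows by decomposing through removal of $n+1$: deleting $n+1$ from either structure recovers the gap, and its label, where it was inserted, so each step is reversible. The statistics $\jump$ and $\des-1$ correspond to $\exc$ and $\drop$ because the label multisets coincide at every step.
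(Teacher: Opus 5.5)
Your proposal is correct and follows essentially the route the paper attributes to Chen--Fu. You build a one-line grammatical labeling by jumps, descents and left successions governed by Dumont's grammar $\{a\to az,\, z\to xy,\, x\to xy,\, y\to xy\}$, and run it in parallel with Dumont's cycle labeling to get a grammar assisted bijection that maintains $\overline{L}(\sigma)=F(\phi(\sigma))$. One small slip: inserting $n+1$ after a drop $i$ in cycle notation turns $i$ into an excedance (its new image is $n+1>i$) rather than keeping its type, but the rule $y\to xy$ and the rest of your argument are unaffected.
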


\section{The Expansion  of $(g(x)D)^n f(x)$}

In this section, we 
discuss the expansion of \((g(x)D)^n\) when
applied to a function \(f(x)\), where
\(D=d/dx\), and \(g(x)\) can be viewed as an
operator of multiplication by \(g(x)\). 
Noting that
\[
(Dg(x))(f(x))=(g(x)D)(f(x))+g'(x)(f(x)),
\]
we have the relation
\begin{equation}\label{Dg}
Dg=gD+g',
\end{equation}
where the multiplication operator \(g\) does not commute with  \(D\). By (\ref{Dg}),
we obtain a grammar for the expansion of 
\((g(x)D)^n f\) and we may also 
look at the expansion of \((g(x)D)^n f\) from the
perspective of normal ordering defined below; see \eqref{no}.

The expansion of \((g(x)D)^n f(x)\) has been extensively studied, see Scherk
\cite{Scherk-1823}, Comtet \cite{Comtet-1973},
Mansour-Schork \cite{MS-2016} and Han-Ma \cite{Han-Ma-2024}.
In particular, Carlitz-Scoville \cite{Carlitz-Scoville-1972}
investigated
the expansion of \(((1+x^2)D)^n f(x)\). In fact,
the involved polynomials, called the 
derivative polynomials, can be traced back to
the work of Knuth-Buckholtz \cite{KB-1967} 
on the Euler numbers. 
For the generating functions and the grammatical calculus of
the derivative polynomials, see  
\cite{Chen-Fu-2023-AC, Hoffman}.

Write
\begin{align}\label{Expansion-A}
F_n = (g(x)  D)^n f(x) =   \sum_{k=0}^n A_{n,  k}(g_0,   g_1,   \ldots,   g_{n-k}) f_k,  
\end{align}
where $f_i = D^i (f) $ and $g_i = D^i(g)$.  
The polynomials
$A_{n,  k}(g_0,   g_1,   \ldots,   g_{n-k})$
have been called the 
$A$-polynomials. The coefficients of the $A$-polynomials
are listed as A139605 in OEIS \cite{OEIS}.

Let $f_0=f$,  
 $g_0=g$,   and $A_{0,  0} =1$. 
 For $n\leq 4$,  we have  
\begin{eqnarray*}
F_0 &= &f_0, \\[3pt]
 F_1 &=& g_0f_1, \\[3pt]
 F_2 & = &          g_0g_1f_1 +  g_0^2 f_2 ,  \\[3pt]
F_3 & = &   (g_0^2 g_2 +g_0 g_1 ^2) f_1+ 3  g_0^2 g_1 f_2 +g_0^3f_3,  \\[3pt]
F_4 & = &   
(g_0^3 g_3   + 4 g_0^2 g_1 g_2 +g_0 g_1^3) f_1+(4 g_0^3 g_2  
+7  g_0 ^2g_1^2) f_2 +6 g_0^3 g_1 f_3+g_0^4f_4.
\end{eqnarray*}

The $A$-polynomials first appeared in the
thesis of Scherk in 1823  \cite{Scherk-1823}, 
where  he computed $F_n$ for $n\leq 5$,   
see Blasiak-Flajolet \cite{BF-2011}.
Murphy \cite{Murphy-1837} in 1837 
 obtained explicit expressions for the
coefficients of $f_k$ in $F_n$ for $k=n,n-1, n-2$. 
Comtet \cite{Comtet-1973}
obtained the following formula for general $n$.

\begin{theorem}[Comtet \cite{Comtet-1973}]
\label{Prop:6.1}
For $1\leq k\leq n$,  
we have
\begin{align}\label{Ank-Comtet}
A_{n,  k}=\frac{g_0}{k!}
\sum (2-k_1)(3-k_1-k_2)\cdots (n-k_1-k_2-\cdots-k_{n-1})\frac{g_{k_1}}{k_1!}\cdots\frac{g_{k_{n-1}}}{k_{n-1}!},  
\end{align}
where the sum ranges over
sequences of nonnegative
integers $(k_1,  k_2,  \ldots,  k_{n-1})$ such that
$k_1+k_2+\cdots+k_{n-1}=n-k$ and   $k_1+\cdots+k_j\leq j$ 
for any $1\leq j\leq n-1$. 
\end{theorem}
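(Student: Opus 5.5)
The plan is to turn the expansion of $(gD)^n f$ into a grammar and then read off Comtet's formula from a labelled combinatorial model. By \eqref{Dg}, applying $gD$ to a monomial $g_{j_1}\cdots g_{j_r}f_k$ gives $g_0$ times the formal derivative of that monomial, where the derivative follows the rules $f_k\to f_{k+1}$ and $g_i\to g_{i+1}$. So, writing $\tilde D$ for the formal derivative of the grammar
\[ f_k\rightarrow f_{k+1}, \quad g_i\rightarrow g_{i+1}, \]
we have $F_{n+1}=g_0\,\tilde D(F_n)$, with each step contributing one new factor $g_0$. I would first establish this recursion by induction on $n$.

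Next I would unfold the recursion into a sum over histories. At step $j$ (for $j=1,\dots,n$) a factor $g_0$ is created, and at every later step the derivative acts on exactly one of the factors currently present: either on $f$, or on one of the previously created $g$'s. The factor created at step $j$ is differentiated only at steps $j+1,\dots,n$. Suppose it is differentiated $k_j$ times in total. Then it ends up as $g_{k_j}$.

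The last created factor, at step $n$, is never differentiated, so $k_n=0$. It is convenient to reindex so that the leading $g_0$ in \eqref{Ank-Comtet} is this final factor, with the remaining factors numbered as in the formula. Equivalently, one can write $F_n=g_0\tilde D(F_{n-1})$ and count the histories of the $n-1$ derivative steps.

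The counting goes as follows. Think of each derivative step as a ball placed on one of the factors; the final $f_k$ means $f$ received $k$ balls. A monomial $\prod g_{k_i}\,f_k$ with a fixed assignment of totals $k_i$ arises from the number of ways to order the derivative events. I plan to count these orderings by inserting events one at a time in order of the factors' creation, or dually by processing steps in time. Processing steps in time, at step $m$ the available targets are $f$ and the $g$'s created before step $m$, and $f$ is always available. Distinguishing factors by creation time (so that the $g$'s are labelled) should turn the product into falling-type factors $(j+1-k_1-\cdots-k_j)$. The constraint $k_1+\cdots+k_j\le j$ expresses that derivatives on the first $j$ factors must happen after those factors exist.

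The normalisations $1/k_i!$ and $1/k!$ come from the fact that the events on a single factor are unordered once we only record totals. Concretely, I would count sequences of target choices, giving a multinomial over time slots, and then show that the number of valid interleavings equals the product in \eqref{Ank-Comtet} times the factorials. An alternative route is to prove directly that the right-hand side satisfies the same recurrence that $\tilde D$ induces on $A_{n,k}$, namely
\[ A_{n+1,k}=g_0\bigl(\tilde D_g A_{n,k}+A_{n,k-1}\bigr), \]
where $\tilde D_g$ acts only on the $g$'s. This would be done by induction on $n$, splitting the Comtet sum according to the value of $k_{n}$.

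The main obstacle is matching the product $(2-k_1)(3-k_1-k_2)\cdots$ with the number of interleavings. The cleanest approach is probably a ballot-type argument. Place the derivative steps in reverse time: going backwards, the factor created at step $j$ "absorbs" its $k_j$ derivatives from the slots after $j$, and the number of free slots remaining when factor $j$ is processed is exactly $j+1-(k_1+\cdots+k_j)$, up to the index shift. If the direct count gives binomials rather than plain products, I would instead fall back on the recurrence verification and check the small cases $F_2,F_3,F_4$ listed above to fix the indexing.
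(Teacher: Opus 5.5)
Your setup is correct and follows the paper's route. The recursion $F_{n+1}=g_0\tilde D(F_n)$ is exactly the grammar \eqref{g-Dumont-fg}: both $g_0\tilde D$ and the formal derivative of \eqref{g-Dumont-fg} are derivations that agree on every $f_i$ and $g_i$. A history is a choice of target $e_m\in\{0,1,\dots,m-1\}$ at each step $m$, with $0$ meaning $f$. That is an inversion sequence, i.e.\ an increasing tree on $\{0,1,\dots,n\}$ in which the factor created at step $i$ ends as $g_{\deg_T(i)}$.

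The gap is that you never count the histories with prescribed totals, which is the whole content of \eqref{Ank-Comtet}. The two ingredients this count needs are left vague or misstated.

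\textbf{Indexing.} The indexing in \eqref{Ank-Comtet} is the reverse of creation order. $k_j$ must be the number of hits on the factor created at step $n-j$. Then $s_j:=k_1+\cdots+k_j\le j$ says that the factors created at steps $n-1,\dots,n-j$ can only be hit during the $j$ steps $n-j+1,\dots,n$. In creation order the constraint fails. For example, the term $g_0^2g_2f_1$ of $F_3$ comes from the step-$1$ factor being hit twice, which is $(k_1,k_2)=(0,2)$ in Comtet's indexing.

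\textbf{Form of the count.} Processing steps forward in time only reproduces the product $t_0(t_0+t_1)\cdots(t_0+\cdots+t_{n-1})$. Your reverse-order ballot idea is the right one; it is the paper's ``power of $t_{n-1}$ first''. However, the number of free slots is not itself the count, because each factor chooses a subset of the free slots. So the count is a product of binomials. That is not a failure mode; it is exactly Comtet's coefficient.

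Concretely, the factor from step $n-j$ chooses its $k_j$ steps among the $j-s_{j-1}$ steps of $\{n-j+1,\dots,n\}$ not yet claimed. Then $f$ takes the remaining $n-s_{n-1}=k$ steps. So, with $s_0=0$, the number of histories yielding $g_0\,g_{k_1}\cdots g_{k_{n-1}}f_k$ is
\[
\prod_{j=1}^{n-1}\binom{j-s_{j-1}}{k_j}
=\prod_{j=1}^{n-1}\frac{(j-s_{j-1})!}{k_j!\,(j-s_j)!}
=\frac{\prod_{j=1}^{n-2}(j+1-s_j)}{k_1!\cdots k_{n-1}!\,(n-1-s_{n-1})!}
=\frac{(2-s_1)(3-s_2)\cdots(n-s_{n-1})}{k!\,k_1!\cdots k_{n-1}!}.
\]
The middle step telescopes $(j+1-s_j)!/(j-s_j)!=j+1-s_j$. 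The last step uses $(n-1-s_{n-1})!=(k-1)!$ with $k=n-s_{n-1}$.

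This telescoping is the missing idea. It also makes your remark about $1/k!$ and $1/k_j!$ precise: the numerator counts histories in which the hits on each factor, $f$ included, are additionally ordered.

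With this, your fallback is unnecessary. As sketched it is also not pinned down. Passing from $n$ to $n+1$ shifts every index in \eqref{Ank-Comtet}, and the variable governed by the new last step is $k_1\in\{0,1\}$, not $k_n$. The resulting identity among Comtet coefficients would still have to be proved.
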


The idea of using labeled trees to describe 
the action of partial differential operators on multivariate
functions goes back to Cayley.  Following this direction,
Copeland~\cite{Copeland-2010} and 
Blasiak-Flajolet \cite{BF-2011} independently
obtained an interpretation of 
$A_{n, \, k}$ in terms of increasing trees. 

Let $T$ be an increasing tree on 
$\{0, 1, \ldots,n\}$. As before, let 
$\deg_T(i)$ denote the outdegree of $i$ in $T$. Define
the weight of $T$ by 
\[ g_{\deg_T(1)} \, \cdots\,g_{\deg_T(n)}\,f_{\deg_T(0)}. \]

\begin{theorem}[Copeland \cite{Copeland-2010},   Blasiak-Flajolet \cite{BF-2011}] 
\label{thm:Copeland-B-F}
For $n\geq 1$,  
$F_n$ equals the total weight
of increasing trees on $\{0, 1, \ldots,n\}$.  
\end{theorem}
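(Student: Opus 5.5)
We prove Theorem \ref{thm:Copeland-B-F} by first converting the operator expansion into a context-free grammar, and then giving a grammatical labeling of increasing trees that realizes this grammar.

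\textbf{Step 1: the grammar.} By the commutation relation \eqref{Dg}, applying $gD$ to a product $g_{i_1}\cdots g_{i_m} f_k$ differentiates it by the Leibniz rule and then multiplies by $g_0$. Hence, writing $\mathcal{D}$ for the formal derivative of the rules
\[ f_k \rightarrow f_{k+1}, \quad g_i \rightarrow g_{i+1} \quad (i,k\ge 0), \]
we have $F_{n+1} = g_0\,\mathcal{D}(F_n)$. This is not yet context-free, because of the extra factor $g_0$. To remove it, we absorb the factor into the rules: $F_{n+1} = \tilde D(F_n)$, where $\tilde D$ is the derivation defined by
\[ f_k \rightarrow g_0 f_{k+1}, \quad g_i \rightarrow g_0 g_{i+1}. \]
This works because $\tilde D = g_0\mathcal{D}$ is itself a derivation (a derivation times a scalar element is a derivation). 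Consequently $F_n = \tilde D^n(f_0)$, and this grammar is exactly of Dumont type \eqref{Dumont-tree}, with the root carrying the letters $f$ and the other vertices carrying the letters $g$.

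\textbf{Step 2: the grammatical labeling.} Given an increasing tree $T$ on $\{0,1,\dots,n\}$, label the root $0$ by $f_{\deg_T(0)}$ and each vertex $i\ge 1$ by $g_{\deg_T(i)}$. The weight of $T$ is then the product of its labels. Every increasing tree on $\{0,\dots,n+1\}$ arises uniquely from an increasing tree on $\{0,\dots,n\}$ by attaching $n+1$ as a new leaf child of some vertex $v$. If $v$ is the root with outdegree $k$, its label changes from $f_k$ to $f_{k+1}$ and the new leaf receives the label $g_0$; this is the rule $f_k\to g_0f_{k+1}$. If $v\ne 0$ has outdegree $i$, the change is $g_i\to g_0g_{i+1}$. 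Since there is exactly one choice per vertex, the sum of weights over all trees obtained from $T$ equals $\tilde D(\mathrm{wt}(T))$.

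\textbf{Step 3: induction.} The base case $n=0$ is the single-vertex tree with weight $f_0=F_0$. Summing Step 2 over all trees on $\{0,\dots,n\}$ and using linearity together with Step 1 shows that the total weight of trees on $\{0,\dots,n+1\}$ is $\tilde D(F_n)=F_{n+1}$. This completes the induction.

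\textbf{Main obstacle.} The only delicate point is Step 1: justifying that the noncommuting product $(gD)^n$ acting on $f$ coincides with iterating the commutative derivation $\tilde D$. The key observation is that $(gD)h = g\,h'$ for any function $h$ of $x$, and $h\mapsto gh'$ is a derivation on the algebra generated by the $f_i$ and $g_i$, whose values on generators are exactly $g_0f_{k+1}$ and $g_0g_{i+1}$. Once this is checked, the rest is routine. As a sanity check, the small cases $F_2$ and $F_3$ listed above agree with direct tree counts.
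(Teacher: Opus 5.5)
Your proof is correct and follows the paper's route: the paper derives the Chen--Fu--Wang grammar \eqref{g-Dumont-fg}, $f_i \rightarrow g_0 f_{i+1}$, $g_i \rightarrow g_0 g_{i+1}$, from the relation \eqref{Dg} to get $F_n = D^n(f_0)$, and then says the increasing-tree interpretation follows immediately from Dumont-style labeling of vertices by outdegree, which is exactly your Steps 1--3. Your remark that $g_0\,\mathcal{D}$ is itself a derivation agreeing with the grammar on generators supplies the justification the paper leaves implicit.
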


Chen-Fu-Wang \footnote{ \label{fn-2026}
W.Y.C. Chen, A.M. Fu and E.L. Wang,  The grammatical calculus and normal ordering.
Preprint,  2026}
obtained a grammar that generates $F_n$.

\begin{theorem}
Let $D$ be the formal derivative of the following
grammar  
\begin{align} \label{g-Dumont-fg}
   G=\{ f_i \rightarrow g_0\,  f_{i+1},  \;\; g_i \rightarrow g_0\, g_{i+1} \;|\;
   i = 0,   1,   2,   \ldots \}.
\end{align}
Then    $F_n = D^n(f_0)$.
\end{theorem}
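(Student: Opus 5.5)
The natural approach is to show that the formal derivative $D$ of the grammar (\ref{g-Dumont-fg}), restricted to suitable polynomials, acts exactly like the operator $g(x)\frac{d}{dx}$. The intended model is the following. Let $\Phi$ be the algebra homomorphism from the polynomial ring in the variables $f_0,f_1,\ldots,g_0,g_1,\ldots$ to functions of $x$ that sends $f_i\mapsto f^{(i)}(x)$ and $g_i\mapsto g^{(i)}(x)$. Let $\partial=d/dx$ be the ordinary derivative. I would prove the intertwining relation
\[
\Phi(D(P)) = g(x)\,\partial\,\Phi(P)
\]
for every polynomial $P$ in these variables. Granting this, induction on $n$ gives $\Phi(D^n(f_0))=(g(x)\partial)^n f(x)=F_n$. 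Since the variables $f_i,g_i$ are formal symbols standing for $D^i(f),D^i(g)$ exactly as in (\ref{Expansion-A}), this is the claimed identity $F_n=D^n(f_0)$.

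To establish the intertwining relation, I would observe that both sides are derivations composed with the homomorphism $\Phi$. Indeed, $P\mapsto \Phi(D(P))$ is a $\Phi$-derivation because $D$ is a derivation. Likewise $P\mapsto g\,\partial\Phi(P)$ is a $\Phi$-derivation, since $g\partial$ is a derivation on functions: multiplying a derivation by a function preserves the Leibniz rule. Two $\Phi$-derivations agree on all polynomials once they agree on the generators, by linearity and the Leibniz rule. So it suffices to check the generators. On them we have $\Phi(D(f_i))=\Phi(g_0f_{i+1})=g\,f^{(i+1)}=g\,\partial f^{(i)}$, and similarly $\Phi(D(g_i))=g\,g^{(i+1)}=g\,\partial g^{(i)}$.

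The only delicate point is interpretive: one must read the statement so that the coefficients $A_{n,k}$, viewed as polynomials in the symbols $g_j$, coincide, not merely agree after evaluation. For generic $f$ and $g$ the derivatives $f^{(i)},g^{(j)}$ are algebraically independent, so the two expansions are identical as polynomials. To avoid analytic issues altogether, I would instead work in the differential algebra generated by the symbols and define $g\partial$ there, where $\partial$ is the derivation with $\partial f_i=f_{i+1}$ and $\partial g_i=g_{i+1}$. The grammar derivative is then literally $D=g_0\partial$, because both sides are derivations that agree on generators. The claim $D^n(f_0)=(g_0\partial)^n f_0$ is then tautological, and its expansion is (\ref{Expansion-A}). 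As a sanity check I would compare $D^2(f_0)=D(g_0f_1)=g_0g_1f_1+g_0^2f_2$ with the displayed value of $F_2$.
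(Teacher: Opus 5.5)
Your proof is correct and is essentially the paper's argument. The paper derives the grammar from the commutation relation $Dg=gD+g'$. That relation amounts to your observation that $g\,d/dx$ acts as a derivation on monomials in the $f_i$ and $g_j$, sending $f_i\mapsto g_0f_{i+1}$ and $g_i\mapsto g_0g_{i+1}$. Hence the grammar derivative is literally $g_0\partial$, and $D^n(f_0)=(g_0\partial)^nf_0$.

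Working in the free differential algebra, so that the $A_{n,k}$ are identified as polynomials and not merely after evaluation, is a useful tightening of a point the paper leaves implicit.
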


Note that the above increasing tree interpretation
of the $A$-polynomials  immediately follows from
the grammar \eqref{g-Dumont-fg}. The increasing tree
interpretation can be reformulated in terms of 
inversion sequences, because  the Pr\"ufer codes
of increasing trees are precisely inversion sequences.

To be more specific, 
a sequence $e = (e_1, e_2, \dots, e_n)$ of   nonnegative integers is called an inversion sequence
if for $1 \le i \le n$,  we have $0 \le e_i < i$.  Han-Ma \cite{Han-Ma-2024} obtained an expansion
of $(gD)^n f$ in terms of inversion sequences and
gave a derivation of Comtet's formula in Theorem \ref{Prop:6.1}.

Next we show how the Pr\"ufer codes or the inversion
codes lead to the formula of Comtet. 
Let $T$ be an increasing tree on 
$\{0,  1,  2,  \ldots,  n\}$.
Notice that the maximum element $n$ must be a leaf.
Let $e_n$ denote the parent of $n$. 
After this leaf is removed,
we get an increasing tree on 
$\{0, 1,2, \ldots, n-1\}$ in which 
$n-1$ must be a leaf.
Denote the parent of $n-1$ by $e_{n-1}$. 
Repeating this procedure yields 
a sequence $(e_n, e_{n-1}, \ldots, e_1)$. 
Reversing this sequence, we get an inversion
code $(e_1,\ldots,e_n)$.
Observe that the number of occurrences of 
$i$ equals the outdegree of the vertex $i$ 
in the increasing tree. Thus, we obtain
\begin{align} \label{x}
\sum_T t_0^{\deg_T(0)}\, t_1^{\deg_T(1)}\, \cdots
\, t_n^{\deg_T(n)} = t_0\,  (t_0+t_1) \,  \cdots \,(t_0+t_1+\cdots+t_{n-1}),
\end{align} 
where the sum ranges over
increasing trees $T$ on $\{0,1,2,\ldots,n\}$.
To expand the product on the right side of (\ref{x})
we may consider the power of $t_{n-1}$ first, then
the power of $t_{n-2}$, and so on. 
By the Pr\"ufer correspondence, 
a power $t_i^j$ in the expansion indicates that 
the vertex $i$ has outdegree $j$,
contributing the factor $g_j$ in the $A$-polynomial.
Thus, we arrive at the formula of Comtet, see
 Chen-Fu-Wang \textsuperscript{\ref{fn-2026}}.

For the case $g(x)=x$, 
the expansion of $(xD)^n$ can be viewed as a special case
of the normal ordering problem. As noted by 
Schork \cite{Schork2021}, the idea of
normal ordering goes back to Scherk \cite{Scherk-1823}. Consider the operators \(X\) and \(D\)
satisfying the relation
\begin{align}
\label{eq:weyl}
    DX-XD=I,
\end{align}
where \(I\) is the identity operator. 
The associative algebra generated by \(X\) and \(D\), subject
to \eqref{eq:weyl}, is called the first Weyl algebra.

Any monomial in \(X\) and  \(D\) can be written as
\[
\omega=X^{r_1}D^{s_1}X^{r_2}D^{s_2}\cdots X^{r_n}D^{s_n},
\]
where \(r_k,s_k \geq 0 \). The word 
\(\omega\) is said to be in normal ordered form if it is written as
\begin{align}
\label{no}
\omega=\sum_{r,s\in \mathbb{N}} S_{r,s}(\omega)X^rD^s,
\end{align}
where the coefficients \(S_{r,s}(\omega)\) are uniquely
determined and are called the normal ordering coefficients.

Ma-Mansour-Yeh-Yeh \cite{Ma-Mansour-Yeh-Yeh2024}
considered the normally ordered
expansion of $(w(x,y,\ldots)D_G)^n$, where
$D_G$ is the formal derivative of a grammar \(G\). 
For example, given a grammar \[ G=\{x \to u(x,y), \, y \to v(x,y)\}\]
and a function $w(x,y)$,
one may expand
$(w(x,y)D_G)^n$ as
\[
(w(x,y)D_G)^n
=
\sum_{k=0}^{n}\xi_{n,k}(x,y)w^k(x,y)D_G^k.
\]
The coefficients \(\xi_{n,k}(x,y)\) are
determined by the grammar $G$ and the weight function \(w(x,y)\).

Let \(\operatorname{cdes}(\pi)\) denote the
number of cycle descents of a permutation  \(\pi\).
A cycle descent of a permutation  is defined as
follows. In the following, we always write a permutation in standard cycle form, in which each cycle has its smallest element first and the cycles are written in increasing order of their first elements. If $j$ immediately 
follows \(i\) in this linear
representation and $i>j$, 
then we call $(i,j)$ a cycle descent.
For example,  the permutation 
  $\pi=(1,4,2)(3,5,7)(6,9,8)$ 
has two cycle descents \((4,2)\) and \((9,8)\). 
Ma-Mansour-Yeh-Yeh \cite{Ma-Mansour-Yeh-Yeh2024} 
obtained the following result.

\begin{theorem}[Ma-Mansour-Yeh-Yeh \cite{Ma-Mansour-Yeh-Yeh2024}]
Let
\[
G=\{x\to y,\; y\to py\}.
\]
For \(n\geq 1\), we have
\begin{align}
\label{normal-grammar}
(xD_G)^n
=
\sum_{\pi\in S_n}
x^{\,n-\exc(\pi)}
y^{\,\exc(\pi)}
p^{\,\operatorname{cdes}(\pi)}
D_G^{\,\cyc(\pi)}.
\end{align}
\end{theorem}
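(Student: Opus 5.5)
We argue by induction on $n$, matching the operator recursion $(xD_G)^{n+1}=xD_G\,(xD_G)^n$ with the recursive construction of permutations of $[n+1]$ from permutations of $[n]$ by inserting $n+1$ into the standard cycle form. The whole argument rests on a combinatorial insertion rule that makes the three pieces of the operator recursion correspond to three kinds of insertion. For $n=1$ the identity reads $xD_G=x\,D_G^{1}$, which corresponds to the single permutation $(1)$, with $\exc=0$, $\operatorname{cdes}=0$ and $\cyc=1$.

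\textbf{Operator side.} Regard $D_G$ as a derivation and compose operators. For a monomial $m$ we have
\[
D_G\circ(mD_G^k)=D_G(m)\,D_G^k+m\,D_G^{k+1}.
\]
Take $m=x^{n-e}y^{e}$. Since $D_G(x)=y$ and $D_G(y)=py$,
\[
x\,D_G(m)=(n-e)\,x^{n+1-(e+1)}y^{e+1}+e\,p\,x^{n+1-e}y^{e}.
\]
Hence $xD_G$ applied to the term $x^{n-e}y^ep^cD_G^k$ produces three terms:
\begin{itemize}
\item[(a)] $x^{n+1-e}y^ep^cD_G^{k+1}$;
\item[(b)] $(n-e)\,x^{n+1-(e+1)}y^{e+1}p^cD_G^k$;
\item[(c)] $e\,x^{n+1-e}y^ep^{c+1}D_G^k$.
\end{itemize}

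\textbf{Combinatorial side.} Given $\pi\in S_n$ in standard cycle form, there are two ways to insert $n+1$.
\begin{itemize}
\item[(i)] Insert $n+1$ as a new cycle $(n+1)$ at the end. This keeps $\exc$ and $\operatorname{cdes}$ and raises $\cyc$ by one, which is term (a).
\item[(ii)] Insert $n+1$ immediately after some element $i$, so that $\pi'(i)=n+1$ and $\pi'(n+1)=\pi(i)$. This keeps $\cyc$. Since $n+1$ is never a cycle minimum, the standard form is preserved.
\end{itemize}

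In case (ii), $i$ becomes an excedance and $n+1$ is not one. So $\exc$ is unchanged if $i$ was an excedance and increases by one otherwise.

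For cycle descents, which are counted within cycles, split on $i$:
\begin{itemize}
\item If $i$ is an excedance, then $j=\pi(i)>i$ is not the cycle minimum, so $j$ follows $i$ in the cycle. The ascent $(i,j)$ is replaced by the ascent $(i,n+1)$ and the descent $(n+1,j)$, so $\operatorname{cdes}$ increases by one.
\item If $i$ is not an excedance and is the last element of its cycle, appending $n+1$ creates only the ascent $(i,n+1)$, so $\operatorname{cdes}$ is unchanged.
\item If $i$ is not an excedance and is not last, then $\pi(i)=j<i$, so the old descent $(i,j)$ is replaced by $(i,n+1),(n+1,j)$, again with $\operatorname{cdes}$ unchanged.
\end{itemize}

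There are $e$ excedance positions, giving term (c), and $n-e$ non-excedance positions, giving term (b). The powers of $x$, namely $n+1-\exc(\pi')$, match in each case.

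\textbf{Conclusion.} Removing $n+1$ from a permutation of $[n+1]$ inverts the insertion: either delete the singleton cycle, or delete $n+1$ and set $\pi(i)=\pi'(n+1)$. Hence every $\pi'\in S_{n+1}$ arises exactly once. Summing over $\pi\in S_n$ and using the induction hypothesis gives \eqref{normal-grammar} for $n+1$.

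The main obstacle is choosing the insertion rule, inserting $n+1$ after $i$ rather than before it, so that excedances and cycle descents change in exactly the pattern dictated by $D_G(y)=py$. A secondary point is checking that cycle descents are never counted across cycle boundaries, which the case analysis above relies on.
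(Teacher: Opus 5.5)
Your proof is correct. The recursion $(xD_G)^{n+1}=xD_G\,(xD_G)^n$, together with $D_G\circ(mD_G^k)=D_G(m)D_G^k+mD_G^{k+1}$, splits each term $x^{n-e}y^ep^cD_G^k$ into exactly your pieces (a), (b), (c). Your insertion of $n+1$ realizes these three pieces with multiplicities $1$, $n-e$ and $e$, either as a new singleton cycle or immediately after an element $i$. The case analysis for $\exc$ and $\operatorname{cdes}$ is accurate, including the fixed-point and last-in-cycle cases. It also rests on the correct reading of cycle descents as occurring only inside cycles, which the paper's example confirms, since the cross-cycle pair $(7,6)$ is not counted.

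The paper gives no proof of this theorem; it only quotes it from Ma-Mansour-Yeh-Yeh. So there is no argument in the paper to compare against step by step. Your argument is, however, a grammatical labeling in the sense of Section 3, built on the same cycle insertion used there for $a\to ay,\ y\to y^2$ (new element after an existing one, or as a new cycle at the end). In that language the labels are as follows:
\begin{itemize}
\item an element $i$ is labeled $y$ if it is an excedance and $x$ otherwise;
\item the factor $D_G^k$ plays the role of the cycle-creating label $a$;
\item the prefactor $x$ in $xD_G$ supplies the label of the new element $n+1$, which is never an excedance.
\end{itemize}
Then $x\to y$ records that inserting after a non-excedance turns it into an excedance. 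Likewise, $y\to py$ records that inserting after an excedance creates exactly one new cycle descent $(n+1,\pi(i))$. What your write-up adds beyond a bare labeling is the explicit operator-composition step. That step is exactly what is needed to pass from a grammatical labeling to a normally ordered expansion in powers of $D_G$.
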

If we use a variable $q$ to replace the operator $D_G$ in
the above sum, we get multivariate polynomials called $(\operatorname{cdes},\operatorname{cyc}) (p,q)$-Eulerian polynomials. 
 Ma-Mansour-Yeh-Yeh \cite{Ma-Mansour-Yeh-Yeh2024} also showed, by means of grammars, that the Eulerian polynomials,
 the second-order Eulerian polynomials,
 the type \(B\)  Eulerian polynomials and the up-down run polynomials can all be represented by normally ordered expansions.

\section{The Ramanujan Polynomials}
 
As remarked by Berndt \cite{Berndt-I,  Berndt-etal},
no combinatorial perspective 
seems to be alluded to in the 
original definition of Ramanujan polynomials.
As time passes, we have come to see that
they legitimately belong to the  subject of
enumerative combinatorics.

The Ramanujan polynomials arise in three
seemingly unrelated contexts. First,
Ramanujan \cite{Berndt-I,  Ramanujan} defined
the polynomials $\psi_k(r, x)$ 
which we call the Ramanujan polynomials,    via the following
relation, 
\begin{align}\label{align:psi}
  \sum_{k=0}^\infty \frac{(x+k)^{r+k}\,e^{-u(x+k)}\,u^{k}}{k!}
  =\sum_{k=1}^{r+1}\frac{\psi_k(r, x)}{(1-u)^{r+k}}.
\end{align}

The second definition of the
Ramanujan polynomials has a different story.
As a refinement of Cayley's formula for
labeled trees, Shor \cite{Shor} came up with an insertion 
algorithm for labeled
rooted trees based on the notion of 
improper edges. Let $T$ be a rooted tree on $[n]$,
and $(i,  j)$ be an edge of $T$ with $j$ being 
a child of $i$. 
If $i$ is smaller than  any descendant of $j$
(including $j$), then $(i,j)$ is called a proper edge;
otherwise $(i,j)$ is called an improper edge. 
The algorithm leads to a 
recurrence relation for the number of 
rooted trees on $[n]$ with $k$ improper edges.
Shor then introduced the polynomials $Q_{n,k}(x)$, where
$x$ is treated as a positive integer. 

Zeng \cite{Zeng} gave an interpretation 
of $Q_{n,k}(x)$ by treating $x$ as an indeterminate
and observed a connection 
between Shor's polynomial and the Ramanujan polynomials
$\psi_k(r, x)$, namely
\begin{align} \label{Qnkx}
Q_{n, k}(x)=\psi_{k+1}(n-1, x+n). 
\end{align}
Let  ${\cal T}_{n+1, \, k}$ be
the set of rooted trees on $[n+1]$ with root $1$ and with
$k$ improper edges. Here is the interpretation of Zeng:
\begin{align}
Q_{n, \,  k}(x)=
\sum\limits_{T\in\mathcal{T}_{n+1, \, k}}x^{\deg_{T}(1)-1}, 
\label{zeng-1}
\end{align}
where $\deg_T(i)$ denotes the outdegree of
the vertex $i$ in  $T$. 

The third story about the Ramanujan is also surprising.
Around the same time of Shor's algorithm,
Dumont-Ramamonjisoa \cite{Dumont-Ramamonjisoa}
looked at a functional equation of Ramanujan:
\begin{align}\label{RE}
    x = y\, e^{-y} + \frac{a-1}{a} (e^{-y} - 1),
\end{align}
where $y$ is a series in $x$ without a constant term and $a$ is a parameter. 
To solve this equation in combinatorial 
terms, Dumont-Ramamonjisoa independently
developed the insertion algorithm as
provided by Shor. Moreover, as noted by 
Chen-Fu-Wang \cite{Chen-Fu-Wang-2025},
the functional equation
satisfied by the generating
function of the Ramanujan polynomials 
turns out to be the functional equation \eqref{RE} also due to Ramanujan.
This leaves one to wonder whether Ramanujan realized
that the polynomials $\psi_k(r,x)$ are somehow related to the 
functional equation \eqref{RE}. 
  
In order to derive
combinatorial expansions of
$y$ and $e^y$, Dumont-Ramamonjisoa started from the functional equation \eqref{RE} and derived the grammar 
\begin{align}
A \rightarrow A^3 S,  \; S \rightarrow AS^2.
\end{align}
 Then they devised an insertion algorithm 
 for rooted trees and showed that the
 recurrence relation for the enumeration
 of rooted tree with respect to the number
 of improper edges is consistent with 
 the recurrence relation governed by the grammar, 
 and so the functional equation was solved combinatorially.

There are two versions of the enumeration of rooted trees with
respect to the number of improper edges. 
Let $\mathcal{R}_{n, k}$ denote
the set of rooted trees on $[n]$ with  $k$ improper 
edges, and let $R(n, k)$ denote
the number of rooted trees in $\mathcal{R}_{n, k}$.
On the other hand, let $T(n, k)$ denote
the number of trees in $\mathcal{T}_{n+1, k}$, that is,
the number of rooted trees on 
$[n+1]$ with root $1$ and with $k$ improper edges. 
The numbers $R(n, k)$ and $T(n, k)$ 
are listed as sequences A054589 and
A217922 in OEIS \cite{OEIS}. 

Dumont-Ramamonjisoa found the following 
solution to the functional equation \eqref{RE}, 
along with the expansion of $e^y$,
\begin{align*}
    y &  = \sum_{n \ge 1} \left( \sum_{k=0}^{n-1}
      R(n, k)\, a^{n+k} \right) \frac{x^n}{n!},  \\[6pt]
      e^y & =1+ \sum_{n \ge 1} \left(\sum_{k=0}^{n-1}
      T(n, k)\, a^{n+k} \right) \frac{x^n}{n!}. 
\end{align*}

They also gave combinatorial interpretations of some
special cases of 
$Q_{n,\,k}(x)$,  such as $Q_{n, k}(0) = R(n, k)$,  $Q_{n, k}(1) = T(n, k)$. In addition,
they showed that $Q_{n, k}(-1)$ equals the
number of trees on $[n]$ with \(k\) improper edges in which vertex \(1\) is a leaf. It is worth mentioning that
Wang-Zhou \cite{Wang-Zhou} showed that
$Q_{n, k}(-1)$  is related to refined orbifold Euler characteristics of
the moduli space of stable curves of genus $0$ with $n$ marked points.
 
Based on the grammar of  Dumont-Ramamonjisoa,
Chen-Yang \cite{Chen-Yang-2021}
gave a grammar for the Ramanujan polynomials $Q_{n, k}(x)$.
By utilizing an extra label, Chen-Fu-Wang
\cite{Chen-Fu-Wang-2025} presented
a grammar for $Q_{n, k}(x)$ 
called a literal grammar, in the sense that 
the grammar is a direct translation of the
recursive construction. The idea of a literal
grammar not only provides a clue to 
build a grammar, it may also be informative
for specializations. Moreover, a literal grammar
might be more convenient to perform the 
grammatical calculus. 

The insertion algorithm of Shor and Dumont-Ramamonjisoa
leads to the following literal grammar:
\begin{align}\label{gram:Ramanujan}
 a\rightarrow axv, \; x\rightarrow xvz,  \;z\rightarrow vz^2,  \; v\rightarrow uv^2z,  \; 
     u\rightarrow u^2vz . 
\end{align}

Assume that $n\geq 1$ and
$T$ is a tree on $[n-1]$ with root $1$.
Consider the insertion of $n$ into $T$ to generate
a rooted tree $T'$ on $[n]$ with root 1.  
Now, the root $1$ is labeled by $a$,  
the children of the root
are labeled by $x$, the rest of the vertices
are labeled by $z$. All edges are labeled by
$v$, and an improper edge is endowed with
an extra label $u$. The reason to label
the children of the root is for the
sake of recording the outdegree of the root,
as required for the computation of $Q_{n,k}(x)$.

Depending on the outdegree of
$n$ in $T'$,  we encounter three cases,  
namely, the outdegree is $0$, or  $1$,  or 
greater than $1$. The three cases can be 
identified with three labels $z$,  $v$ and $u$. 
Accordingly, the insertions in the three cases
are called the leaf-insertion,  the $v$-insertion and
the $u$-insertion.

\begin{itemize}

\item[1.] The leaf-insertion. First, if $n$ is inserted as
child of the root, we get the rule
\[ a \rightarrow axv. \]
Second, if $n$ is inserted as a child of a 
vertex labeled by $x$, we get the rule
\[ x \rightarrow x v z. \]
Third, if $n$ is inserted as a child of a vertex $i$ 
labeled by $z$, that is, $i$ is neither the root nor 
a child of the root, we get the rule
$$z \to zvz, $$
as illustrated in 
Figure \ref{fig:2}.

\begin{figure}[ht]
\centering
\begin{minipage}{0.3\textwidth}
\centering
\begin{tikzpicture}[scale=0.8, 
  level 1/.style = { level distance   = 13mm, 
                     sibling distance = 36mm }, 
  level 2/.style = { level distance   = 13mm, 
                     sibling distance = 20mm }, 
  double line/.style={double,  double distance=1pt,  line width=0.18mm}
  ]
  
\node (root) {} 
    child {node [ns, label=0:{$i \,  (z)$}](i){}
    };
   
\end{tikzpicture}
\end{minipage}%
\begin{minipage}{0.1\textwidth}
\centering
$\displaystyle \xrightarrow{\hspace*{1cm}}$ 
\end{minipage}%
\begin{minipage}{0.3\textwidth}
\centering
\begin{tikzpicture}[scale=0.8, 
  level 1/.style = { level distance   = 13mm, 
                     sibling distance = 36mm }, 
  level 2/.style = { level distance   = 13mm, 
                     sibling distance = 36mm },  
  level 3/.style = { level distance   = 13mm, 
                     sibling distance = 10mm }, 
  double line/.style={double,  double distance=1pt,  line width=0.18mm}]
  
\node (root) {} 
    child {node [ns, label=0:{$i\, (z)$}](n){}
    child {node [ns, label=0:{$n \, (z)$}](j){}
    }
    };
    
    \node [right=3mm,  above=2.5mm] at (j){$v$};
\end{tikzpicture}
\end{minipage}
\caption{A leaf-insertion.}
\label{fig:2}
\end{figure}

\item[2.] A $v$-insertion is illustrated in 
Figure \ref{fig:3}. We get the rule 
$$v \to v(uvz).$$

\begin{figure}[ht]
\centering
\begin{minipage}{0.3\textwidth}
\centering
\begin{tikzpicture}[scale=0.8, 
  level 1/.style = { level distance   = 13mm, 
                     sibling distance = 36mm }, 
  level 2/.style = { level distance   = 13mm, 
                     sibling distance = 20mm }, 
  double line/.style={double,  double distance=1pt,  line width=0.18mm}
  ]
  
\node (root) {} 
    child {node [ns, label=0:$i$](i){}[grow=down]
    child {node [ns, label=0:{$j$}](j){}
    }
    };
    
    \node [right=3mm,  above=2.5mm] at (j){$v$};
\end{tikzpicture}
\end{minipage}%
\begin{minipage}{0.1\textwidth}
\centering
$\displaystyle \xrightarrow{\hspace*{1cm}}$ 
\end{minipage}%
\begin{minipage}{0.3\textwidth}
\centering
\begin{tikzpicture}[scale=0.8, 
  level 1/.style = { level distance   = 13mm, 
                     sibling distance = 36mm }, 
  level 2/.style = { level distance   = 13mm, 
                     sibling distance = 36mm },  
  level 3/.style = { level distance   = 13mm, 
                     sibling distance = 10mm }, 
  double line/.style={double,  double distance=1pt,  line width=0.18mm}]
  
\node (root) {} 
    child {node [ns, label=0:$i$](i){}[grow=down]
    child {node [ns, label=0:{$n\, (z)$}](n){}
    child {node [ns, label=0:{$j$}](j){}}
    }
    };
    
    \node [right=3mm,  above=2.5mm] at (n){$v$};
    \node [right=3mm,  above=2.5mm] at (j){$v$};
    \node [left=3mm,  above=2.5mm] at (j){$u$};
    \draw[double line] (n)--(j);
\end{tikzpicture}
\end{minipage}
\caption{A $v$-insertion.}
\label{fig:3}
\end{figure}

\item[3.] A $u$-insertion is associated with an improper
edge with  a label $u$. In this case, a new improper 
edge is created,  see Figure \ref{fig:4}. We get the rule 
$$u \to u(uvz).$$
where we use $\beta(j)$ to denote
the minimum vertex among all the 
descendants of $j$,  and the vertices
$j_1, j_2, \ldots, j_l$ are
arranged subject to the
condition $\beta(j_1) < \beta(j_2) < \cdots < \beta(j_l)$.
Note that $\{j_{d+1}, \ldots, j_l\}$ is allowed to be empty.

\begin{figure}[ht]
\centering
\begin{minipage}{0.3\textwidth}
\centering
\begin{tikzpicture}[scale=0.7, 
  level 1/.style = { level distance   = 13mm, 
                     sibling distance = 20mm }, 
  level 2/.style = { level distance   = 13mm, 
                     sibling distance = 20mm }, 
  double line/.style={double,  double distance=1pt,  line width=0.18mm}
  ]

\node (root) {} 
    child {node [ns, label=30:{$i$}](i){}
        child {node [ns, label=270:{$j_1$}](1){}}
        child {node [ns, label=270:{$j_d$}](d){}}
        child {node [ns, label=270:{$j_{d+1}$}](s){}}
        child {node [ns,  label={[label distance=4.5pt]270:{ $j_l$}}](l){}
        }
    };
    \node [right=4mm] at (1){$\ldots$};
    \node [right=3.5mm] at (s){$\ldots$};
    \node [left=-1mm,  above=2.5mm] at (d){$u$};
    \draw[double line] (i) -- (1);
    \draw[double line] (i) -- (s);
    \draw[double line] (i) -- (d);
    \draw (i) -- (l);
    \draw (root) -- (i);
    \coordinate (left) at ([xshift=-0.55cm, yshift=0.22cm] s);
    \coordinate (right) at ([xshift=0.5cm, yshift=-0.19cm] l);
    \draw (left) rectangle (right);
\end{tikzpicture}
\end{minipage}%
\begin{minipage}{0.1\textwidth}
\centering
$\displaystyle \xrightarrow{\hspace*{0.5cm}}$ 
\end{minipage}%
\begin{minipage}{0.3\textwidth}
\centering
\begin{tikzpicture}[scale=0.7, 
  level 1/.style = { level distance   = 13mm, 
                     sibling distance = 20mm }, 
  level 2/.style = { level distance   = 13mm, 
                     sibling distance = 20mm },  
  double line/.style={double,  double distance=1pt,  line width=0.18mm}]
  
\node (root) {} 
    child {node[ns, label={[yshift=2pt]0:{$n \,  (z)$}}](n){}[grow=down]
        child {node [ns, label=270:{$j_1$}](1){}}
        child {node [ns, label=270:{$j_d$}](d){}}
        child {node [ns, label=0:{$i$}](i){}
        child {node [ns, label=270:{$j_{d+1}$}](s){}}
        child {node [ns, label={[label distance=4.5pt]270:{$j_{l}$}}](l){}}
        }
    };
    
    \node [right=5mm] at (1){$\ldots$};
    \node [right=0.5mm] at (s){$\ldots$};
    \node [left=9mm,  above=1mm] at (i){$u$};
    \node [right=-3mm,  above=2.5mm] at (i){$v$};
    \node [left=2mm,  above=2.5mm] at (d){$u$};
    \draw[double line] (n) -- (1);
    \draw[double line] (n) -- (i);
    \draw[double line] (n) -- (d);
    \draw[double line] (i) -- (s);
    \draw (root) -- (n);
    \coordinate (left) at ([xshift=-0.4cm, yshift=0.15cm] s);
    \coordinate (right) at ([xshift=0.4cm, yshift=-0.15cm] l);
    \draw (left) rectangle (right);
\end{tikzpicture}
\end{minipage}%
\caption{A $u$-insertion.}
\label{fig:4}
\end{figure}
\end{itemize}

Let $D$ be the formal derivative
of the grammar \eqref{gram:Ramanujan}. 
We have the following grammatical representation of 
$Q_{n,k}(x)$
\begin{align}
    D^n(a)=axv^nz^{n-1}\sum_{k=0}^{n-1} Q_{n, k}(xz^{-1})\, u^k. \label{Dna}
\end{align}
The grammatical calculus
can be employed to derive
the functional equation 
for  the generating function 
of the Ramanujan polynomials. Moreover,
it can be used to justify a grammatical
identity bearing a combinatorial interpretation.

Let
\begin{align*}
    R_n(u)   =  \sum_{k=0}^{n-1}
      R(n, k) \, u^k.
\end{align*}
The first few values of $R_n(u)$ are given below: 
\begin{align*}
R_1(u)&= 1, \\[3pt]
R_2(u)&= 1+ u,\\[3pt]
R_3(u) &=2+4u+ 3u^2, \\[3pt]
R_4(u) &=6+18u+25u^2+15u^3, \\[3pt]
R_5(u) & = 24+96 u+190 u^{2}+210 u^{3} +105 u^{4}. 
 \end{align*}
 
In the study of the monotone properties of 
polynomials, we often use $q$ as the variable. 
Let $\{f_n(q)\}_{n\ge 0}$ be a sequence of 
polynomials in $q$. By writing $f(q) \ge_q g(q)$ we
mean $f(q)-g(q)$ is a polynomial in $q$ 
with nonnegative coefficients. 
If   
\[
f_{m+1}(q)\, f_{m-1}(q) \ge_q f_m(q)^2, 
\]for all $m \ge 1$,
then we say that $\{f_n(q)\}_{n\ge 0}$ is $q$-log-convex. 
If  
\[
f_{m-1}(q)\, f_{n+1}(q) \ge_q f_m(q)\, f_n(q), 
\]
for all $n \ge m \ge 1$, 
then we say that 
$\{f_n(q)\}_{n\ge 0}$ is strongly
$q$-log-convex. 

Chen-Wang-Yang \cite{Chen-Wang-Yang-2011} 
showed that 
$\{R_{n+1}(q)\}_{n\ge 0}$ is strongly $q$-log-convex. 
Sokal \cite{Sokal-2021} obtained a more
general result, namely, every minor of
the Hankel matrix  $(R_{i+j+1}(q))_{i, j\ge0}$
is a polynomial with nonnegative coefficients.

\section{The Insertion Algorithm for Plane Trees}

We observe that the insertion algorithm of 
Shor and Dumont-Ramamonjisoa applies to
labeled plane trees without 
any additional effort. The key point is to treat
every edge as an improper edge. In other words,
we may ignore the notion of improper edges while
recursively constructing plane trees by insertions.
  
Let $n\geq 1$, and let $\mathcal{P}_n$ denote the 
set of labeled plane trees on $[n+1]$.
It is clear that  $\lvert \mathcal{P}_n \rvert =
(n+1)! C_n$, where  $C_n$ is the $n$-th Catalan number, 
that is, the number of unlabeled
plane trees with $n$ edges, 
see sequence A000108 of 
OEIS \cite{OEIS}. 
Let $\mathcal{O}_{n}$ be the set of
plane trees on $\{0, 1, 2, \ldots, n\}$ with root $0$.
It is clear that  $\lvert \mathcal{O}_n \rvert =
n! C_n$.

The ballot numbers  $T(n,  k)$
can be viewed as a refinement of the
Catalan numbers, see sequence A009766 in OEIS  \cite{OEIS}.
For $n \ge 0$ and $0\le k \le n$, the ballot numbers \(T(n,k)\) are defined by
\[ T(n, k)= \frac{n-k+1}{n+1} \binom{n+k}{k},\]
which satisfy the recurrence relation
\begin{equation}\label{ballot-recurrence}
(n+2)\,T(n+1, k) = (n-k+2)\, T(n, k) + (2n+2k)\,T(n, k-1).
\end{equation}

The number $T(n,k)$ can be interpreted 
as the number of unlabeled plane trees with 
$n+1$ edges such that the 
outdegree of the root equals $n-k+1$. 
Making use of the recurrence relation (\ref{ballot-recurrence}),
Ma \cite{Ma-2013-b} found a grammar to generate
$T(n, k)$.

 \begin{theorem}[Ma \cite{Ma-2013-b}]
Let $D$ be the formal derivative of the grammar
     \begin{equation}\label{Ma-Ballot}
 G = \{ a \rightarrow a^2b^2, ~ b \rightarrow b^3c^2,  ~ c \rightarrow b^2c^3 \}. 
 \end{equation}
 For  $n \geq 0$, we have
 \begin{equation}
 \label{grammar:Ballot}
 D^n(a^2b^2) = (n+1)! a^2 b^{2n+2} \sum_{k=0}^{n} T(n, k) a^{n-k} c^{2k}.
 \end{equation}
 \end{theorem}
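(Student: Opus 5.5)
The plan is to prove \eqref{grammar:Ballot} by induction on $n$, using only the context-free (Leibniz) property of $D$ and the ballot recurrence \eqref{ballot-recurrence}. Throughout we adopt the convention $T(n,k)=0$ for $k<0$ or $k>n$. The closed form $\frac{n-k+1}{n+1}\binom{n+k}{k}$ already vanishes at $k=n+1$, so this convention agrees with it at the upper boundary.

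For the base case $n=0$, both sides equal $a^2b^2$, since $T(0,0)=1$.

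The key computation is the action of $D$ on a single monomial $a^{m}b^{p}c^{q}$. From the rules of $G$ we have
\[
D(a^m)=m\,a^{m+1}b^2,\qquad D(b^p)=p\,b^{p+2}c^2,\qquad D(c^q)=q\,b^2c^{q+2}.
\]
Hence
\[
D(a^mb^pc^q)=m\,a^{m+1}b^{p+2}c^{q}+(p+q)\,a^{m}b^{p+2}c^{q+2}.
\]
We apply this to the summand $a^{n-k+2}b^{2n+2}c^{2k}$ of the right-hand side of \eqref{grammar:Ballot}, that is, with $m=n-k+2$, $p=2n+2$, $q=2k$. This gives
\[
(n-k+2)\,a^{2}b^{2n+4}\,a^{(n+1)-k}c^{2k}+(2n+2k+2)\,a^{2}b^{2n+4}\,a^{(n+1)-(k+1)}c^{2(k+1)}.
\]
Both terms carry the prefactor $a^2b^{2(n+1)+2}$ required at level $n+1$. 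The first term contributes to index $k$ at level $n+1$, and the second to index $k+1$.

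Next we assume \eqref{grammar:Ballot} for $n$, apply $D$ by linearity, and collect the coefficient of $a^2b^{2n+4}a^{n+1-k}c^{2k}$ for $0\le k\le n+1$. This coefficient is
\[
(n+1)!\bigl[(n-k+2)\,T(n,k)+(2n+2k)\,T(n,k-1)\bigr].
\]
Here the second term comes from index $k-1$, and its factor $2n+2(k-1)+2$ simplifies to $2n+2k$. By \eqref{ballot-recurrence} the bracket equals $(n+2)T(n+1,k)$. The whole coefficient is therefore $(n+2)!\,T(n+1,k)$, which is exactly the claimed expansion at level $n+1$.

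The only delicate point is the boundary indices $k=0$ and $k=n+1$. At these indices one must check that the recurrence still holds with the zero conventions. At $k=0$ it reduces to $(n+2)\cdot 1=(n+2)\cdot 1$. At $k=n+1$ it can be verified directly from the closed form: the term $(n-k+2)T(n,n+1)$ is zero, and one compares $(n+2)T(n+1,n+1)$ with $(4n+2)T(n,n)$, which is the Catalan identity $(n+2)C_{n+1}=(4n+2)C_n$. Apart from this, the argument is a routine coefficient comparison.
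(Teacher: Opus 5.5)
Your proof is correct. The monomial rule $D(a^mb^pc^q)=m\,a^{m+1}b^{p+2}c^q+(p+q)\,a^mb^{p+2}c^{q+2}$ turns the induction step into exactly the ballot recurrence \eqref{ballot-recurrence}, and your boundary checks at $k=0$ and $k=n+1$ are right; this is the recurrence-based derivation the paper attributes to Ma, which the paper cites rather than proves. The paper also notes a combinatorial alternative: specializing the plane-tree grammar \eqref{chen-gram} via $r=a$, $s=u=c$, $t=v=w=b$ gives \eqref{Ma-Ballot}, and since $D(a)=a^2b^2$, this yields a labeling interpretation of $D^n(a^2b^2)=D^{n+1}(a)$.
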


The Narayana numbers can also be generated
by a grammar. 
For $n \ge 1, 1 \le k \le n$,
the Narayana numbers, see sequence A001263 in OEIS \cite{OEIS}, are defined by 
\[ N(n,  k)=\frac{1}{n}\binom{n}{k}\binom{n}{k-1}. \]
They are also a refinement of the Catalan numbers and
satisfy the following recurrence relation
\begin{equation}\label{Nara-recurrence}
    (n+2)\, N(n\!+\!1, k)=(n+2k)\, N(n, k)+(3n+4-2k)\, N(n, k-1).
\end{equation} 

It is well known that 
$N(n,k)$ equals the number of 
unlabeled rooted plane trees with $n+1$ vertices
and $k$ leaves. 
Employing the
recurrence relation (\ref{Nara-recurrence}), Ma-Ma-Yeh \cite{Ma-Ma-Yeh} found a grammar for the
Narayana polynomial $N_n(x)$ and showed that
they are $\gamma$-positive, where
 \begin{align*}
    N_n(x)=\sum_{k=1}^{n}N(n,k)x^k.
\end{align*}

\begin{theorem}[Ma-Ma-Yeh \cite{Ma-Ma-Yeh}] \label{Ma-Theorem}
    Let $D$ be the formal derivative of the grammar  
    \begin{align}\label{Ma-Ma-Yeh-gram}
        G=\{u\rightarrow u^2v^3,  ~v\rightarrow u^3v^2\}.  
    \end{align}
For $n\ge1$, we have
    \begin{align}
        D^n(u^2) = (n\!+\!1)! \, \sum_{k=1}^{n} N(n,  k) \, u^{3n-2k+2}\, v^{n+2k}. 
    \end{align}
Moreover, $N_n(x)$ is $\gamma$-positive for all $n$. 
\end{theorem}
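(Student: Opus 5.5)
The plan is to prove the grammatical identity by induction on $n$, checking that $D$ acting on the proposed expression reproduces the Narayana recurrence \eqref{Nara-recurrence}. Then we read off $\gamma$-positivity by a change of variables, in the spirit of the Ma-Ma-Yeh treatment of the Eulerian grammar in Section 4.

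For the base case $n=1$ we have $D(u^2)=2u\cdot u^2v^3=2u^3v^3$. This agrees with $2!\,N(1,1)\,u^{3}v^{3}$. For the inductive step, write $D^n(u^2)=(n+1)!\sum_k N(n,k)\,u^{3n-2k+2}v^{n+2k}$ and apply $D$ to a single monomial $u^pv^q$. Using $D(u)=u^2v^3$ and $D(v)=u^3v^2$ gives
\[
D(u^pv^q)=p\,u^{p+1}v^{q+3}+q\,u^{p+3}v^{q+1}.
\]
With $p=3n-2k+2$ and $q=n+2k$:
\begin{itemize}
\item The first term gives $u^{3(n+1)-2k+2}v^{(n+1)+2(k+1)}$, i.e.\ index $k+1$ at level $n+1$, with coefficient $3n-2k+2$.
\item The second term gives $u^{3(n+1)-2k+2}v^{(n+1)+2k}$, i.e.\ index $k$ at level $n+1$, with coefficient $n+2k$.
\end{itemize}
Collecting, the coefficient of the $k$-th monomial at level $n+1$ is
\[
(n+1)!\bigl[(n+2k)N(n,k)+(3n+4-2k)N(n,k-1)\bigr].
\]
Here the shift $3n-2(k-1)+2=3n+4-2k$ matches \eqref{Nara-recurrence} exactly. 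By the recurrence this equals $(n+1)!(n+2)N(n+1,k)=(n+2)!\,N(n+1,k)$. Boundary terms ($k=0$, $k=n+1$) vanish by the convention $N(n,0)=N(n,n+1)=0$, which is consistent with the recurrence. This finishes the induction.

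For $\gamma$-positivity, the plan is to pass to symmetric variables, as was done for $A_n(x,y)$. Note that $D^n(u^2)$ divided by $(n+1)!\,u^{n+2}v^{n+2}$ equals $\sum_k N(n,k)\,u^{2n-2k}v^{2k-2}$. This is a homogeneous polynomial in $(u^2,v^2)$ equal to $v^{-2}u^{2n}N_n(v^2/u^2)$, so it suffices to show $\gamma$-positivity in $X=u^2,\ Y=v^2$.

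To see this, I would set $s=uv$ and $t=u^2+v^2$. Then
\[
D(s)=u^3v^4+u^4v^3=s^3t^{\,0}\cdot(u+v)\ldots
\]
and this naive choice does not close. A cleaner choice is $P=u^2v^2=s^2$ and $Q=u^2+v^2$. These satisfy $D(u^2)=2u^3v^3=2s^3$, $D(v^2)=2s^3$, $D(Q)=4s^3$ and $D(s)=s^3 Q$. So the grammar closes on the pair $\{s,Q\}$ with rules
\[
s\to s^3Q,\qquad Q\to 4s^3.
\]
Starting from $D(u^2)=2s^3$, every $D^n(u^2)$ is then a polynomial in $s$ and $Q$ with nonnegative coefficients. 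Dividing by the appropriate power of $s$ and recalling $Q=X+Y$, $s^2=XY$, each term has the form $(XY)^i(X+Y)^{n-1-2i}$ times a nonnegative constant. This is exactly $\gamma$-positivity of $N_n(x)$.

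The main obstacle is this last step: identifying substitution variables that close under $D$ and showing that the degrees match the $\gamma$-form uniformly. Homogeneity handles the degrees, since each application of $D$ raises the total degree in $(u,v)$ by $4$.
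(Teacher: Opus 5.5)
Your induction for the grammatical identity is correct, and it is the argument behind the statement: Ma-Ma-Yeh obtained the grammar from the Narayana recurrence. Your rule $D(u^pv^q)=p\,u^{p+1}v^{q+3}+q\,u^{p+3}v^{q+1}$, with the index shifts $k\mapsto k+1$ and $k\mapsto k$, reproduces that recurrence exactly. For $\gamma$-positivity you also have the paper's idea, the substitution $x=uv$, $y=u^2+v^2$, $z=u^2$. However, that step contains a computational error that makes it fail as written.

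\textbf{The error in $D(uv)$.} In fact $D(uv)=D(u)\,v+u\,D(v)=u^2v^4+u^4v^2=(uv)^2(u^2+v^2)$, not $u^3v^4+u^4v^3$. So your ``naive'' pair already closes, and the auxiliary $P=s^2$ is never needed. The transformed grammar is $s\to s^2Q$, $Q\to 4s^3$, not $s\to s^3Q$.

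Your rule $s\to s^3Q$ is not even homogeneous: it raises the degree by $6$ on $s$ but by $4$ on $Q$, contradicting your own remark that $D$ raises degrees by $4$. With it you would get $D^2(u^2)=6s^5Q$ instead of the correct $6s^4Q=6u^4v^4(u^2+v^2)$, and the resulting exponents do not have the $\gamma$-form.

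\textbf{The missing exponent bookkeeping.} Even with the correct rule, homogeneity only gives $a+b=2n+1$ for each monomial $s^aQ^b$. To divide out $s^{n+2}$ and land on powers of $s^2=XY$, you also need $a\ge n+2$ and $a\equiv n \pmod 2$. This comes from tracking exponents: $D(s^aQ^b)=a\,s^{a+1}Q^{b+1}+4b\,s^{a+3}Q^{b-1}$. Start from $D(u^2)=2s^3$ and apply $D$ another $n-1$ times, using the rule $Q\to 4s^3$ exactly $j$ times. Every resulting term is a nonnegative multiple of $s^{n+2+2j}Q^{n-1-2j}$.

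So write $D^n(u^2)=\sum_j c_j\,s^{n+2+2j}Q^{n-1-2j}$ with $c_j\ge 0$, and divide by $(n+1)!\,s^{n+2}$. This gives
\[
\sum_{k=1}^{n}N(n,k)\,X^{n-k}Y^{k-1}=\sum_{j\ge 0}\frac{c_j}{(n+1)!}\,(XY)^j(X+Y)^{n-1-2j}.
\]
Setting $X=1$, $Y=x$ yields $N_n(x)=x\sum_j\gamma_{n,j}\,x^j(1+x)^{n-1-2j}$ with $\gamma_{n,j}=c_j/(n+1)!\ge 0$. For example, $n=3$ gives $D^3(u^2)=24s^5Q^2+24s^7$, which matches $X^2+3XY+Y^2=(X+Y)^2+XY$.
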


As noted by Ma-Ma-Yeh \cite{Ma-Ma-Yeh},  
the $\gamma$-positivity of $N_n(x)$ can be deduced
from the change of variables: $x=uv,  y=u^2+v^2$ and $z=u^2$. 
Ma raised the question
of finding a grammatical labeling
to justify the grammar  (\ref{Ma-Ma-Yeh-gram}) for Theorem \ref{Ma-Theorem}, see  \cite{Lin-Liu-Wang-Zang-2024}. 
Lin-Liu-Wang-Zang \cite{Lin-Liu-Wang-Zang-2024} and Yang-Zhang \cite{Yang-Zhang-2025} independently obtain
grammatical labelings, thereby answering Ma's question. The grammar given by Yang-Zhang is as follows,   
\begin{equation} \label{Yang-Zhang-gram}
 G=\{t\rightarrow t^2(x+y),  ~x\rightarrow 2txy,  ~ y\rightarrow 2txy\}.
\end{equation}
Let $D$ be the formal derivative
of the grammar \eqref{Yang-Zhang-gram}. For $n\geq 1$,  we have 
    \begin{align}
        D^{n}(y) =(n+1)!t^{n} \sum_{k=1}^n
      N(n,  k)x^{k}y^{n-k+1}.
    \end{align}
Set $t = uv$,    $x = u^2$ and  $y = v^2$,    the grammar (\ref{Ma-Ma-Yeh-gram})  is transformed into (\ref{Yang-Zhang-gram}).  Setting $y=x$, 
 the grammar (\ref{Yang-Zhang-gram}) reduces to the
 grammar for the Catalan numbers
    \begin{align}\label{Yang-Zhang-Catalan}
    G=\{t\rightarrow 2t^2x,  ~ x\rightarrow 2tx^2\}.
\end{align}
 For $n\geq 0$,  we have 
    \begin{align}
        D^{n}(x)= (n\!+\!1)!\,   t^{n}\,   x^{n+1}\,   C_{n}.
    \end{align}

 Deutsch \cite{Deutsch-1999} obtained a formula for Dyck paths, 
 which can be viewed as a unification
 of the formulas for the Narayana numbers
 and the ballot numbers. We call the involved 
 numbers the Deutsch numbers, denoted
 $D_r(n,  k)$. Combinatorially, \(D_r(n,k)\) is the number of unlabeled plane trees with $n+1$ vertices with root outdegree $r$ and $k$ leaves.
 
\begin{theorem}[Deutsch \cite{Deutsch-1999}]
    For $n\geq 1$ and  $1\leq r, k\leq n$,   
    \begin{align} \label{DF}
        D_r(n,  k) =
 \frac{r}{k}\binom{n-1}{k-1}\binom{n-1-r}{k-r} .
    \end{align}
\end{theorem}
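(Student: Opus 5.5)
The plan is to use a generating-function argument. We decompose at the root and apply Lagrange inversion, which the paper links to the grammatical calculus through Cayley's formula. A plane tree with $n+1$ vertices whose root has outdegree $r\ge 1$ is the same thing as an ordered sequence (a forest) of $r$ plane trees. These trees together have $n$ vertices. Since $r\ge 1$, the root is not a leaf, so the leaves of the whole tree are exactly the leaves of the $r$ subtrees. Hence $D_r(n,k)=[t^n x^k]\,N(t,x)^r$, where
\[ N(t,x)=\sum_{T} t^{|T|}x^{\mathrm{leaf}(T)} \]
is the bivariate generating function of unlabeled plane trees, with $t$ marking vertices and $x$ marking leaves.

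First I would derive the functional equation for $N$. A tree is either a single leaf, contributing $tx$, or a root with a nonempty sequence of subtrees, contributing $t\,N/(1-N)$. This gives $N=t\,\varphi(N)$ with
\[ \varphi(w)=x+\frac{w}{1-w}. \]
Since $\varphi(0)=x\neq 0$, the Lagrange inversion formula applies over the coefficient ring $\mathbb{Q}[x]$. It gives, for $n\ge r\ge1$,
\[ [t^n]\,N^r=\frac{r}{n}\,[w^{n-r}]\,\varphi(w)^n. \]

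Next I would extract the coefficient of $x^k$. Expanding $\varphi(w)^n$ by the binomial theorem, the $x^k$ term is $\binom{n}{k}x^k\,w^{n-k}(1-w)^{-(n-k)}$. Taking $[w^{n-r}]$ amounts to taking $[w^{k-r}]$ of $(1-w)^{-(n-k)}$, which equals $\binom{n-k+k-r-1}{k-r}=\binom{n-r-1}{k-r}$. Therefore
\[ D_r(n,k)=\frac{r}{n}\binom{n}{k}\binom{n-1-r}{k-r}. \]
The identity $\frac1n\binom{n}{k}=\frac1k\binom{n-1}{k-1}$ converts this into \eqref{DF}.

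The main point requiring care is the boundary cases, where the negative-binomial coefficient convention must match the combinatorics. When $k<r$ the formula gives $0$; this is correct, since each of the $r$ subtrees contains at least one leaf. When $k=r=n$, the coefficient $\binom{-1}{0}=1$ arises from the empty power $(1-w)^{0}$, and it correctly counts the star. I would verify these two cases directly.

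If a proof in the paper's grammatical spirit is preferred, the same coefficient extraction could be organized through the labeled plane-tree insertion of the previous section. In that version the root is labeled separately, as in the literal grammar for $Q_{n,k}(x)$, so that its outdegree is recorded. I expect setting up that labeling to be the harder route, and I would fall back on the Lagrange computation above.
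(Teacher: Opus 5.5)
Your proof is correct and complete. Because $r\ge 1$, the root is never a leaf, so $D_r(n,k)=[t^nx^k]N^r$. The equation $N=t\bigl(x+N/(1-N)\bigr)$ is right. The extraction $[w^{k-r}](1-w)^{-(n-k)}=\binom{n-r-1}{k-r}$ holds uniformly under the standard conventions: for $k=n$ it gives $\binom{-1}{0}=1$ when $r=n$ and $\binom{n-r-1}{n-r}=0$ when $r<n$. So the boundary cases need no separate check.

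The paper has no proof of \eqref{DF} to compare with. It quotes Deutsch and uses the formula as the target of the labeling of $\mathcal{O}_n$ and the grammar \eqref{chen-gram}, asserting only $D^n(r)=G_n(r,s,t;u,v,w)$. In that encoding a root of outdegree $d$ carries $r^{d+1}$ and each leaf carries one $t$. Hence $n!\,D_d(n,k)$ is the coefficient of $r^{d+1}t^k$ in $G_n(r,1,t;1,1,1)$.

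Setting $s=u=v=w=1$, however, is a substitution made after differentiating, not a consistent reduction of the grammar; for instance $D(s)=s^2tuv\neq 0$. The reductions the paper actually exhibits, leading to \eqref{Ma-Ballot} and to $\{r\rightarrow r^2s^2,\ s\rightarrow s^5\}$, identify $t$ with other labels and so lose the leaf count. A consistent reduction keeping both statistics does exist, e.g.\ $u=v$, $w=s$ gives $\{r\rightarrow r^2tv,\ s\rightarrow s^2tv^2,\ t\rightarrow s^3tv,\ v\rightarrow s^3v^2\}$. One would still have to solve it for the generating function of $D^n(r)$ and extract coefficients, which the paper only says ``one may'' do.

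Your Lagrange-inversion argument therefore actually delivers the product formula, in a few lines and directly for unlabeled trees. The grammatical route buys a finer multivariate refinement and one insertion mechanism that specializes to the ballot grammars. It does not by itself yield \eqref{DF}; that is the harder path you correctly anticipated in your last paragraph.
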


Next we proceed to adapt the insertion algorithm of Shor and Dumont-Ramamonjisoa to derive a grammar for the
Deutsch numbers, which specializes to grammars 
for the Catalan numbers and the Narayana numbers.
For a plane tree in ${\mathcal O}_n$, define 
a labeling scheme as follows. 
\begin{itemize}
    \item A label $r$ signifies
    a position where one can insert a new vertex
as a child of the root.

\item A label $s$ signifies a position where one can 
insert a new vertex as a child of a vertex that is neither
the root nor a leaf.

\item A label $t$ signifies the position where one
can insert a new vertex as a child of a leaf.

\item Assign a label $v$ to every edge.

\item In addition, for any nonroot internal vertex $i$, 
label the edge from $i$ to its last child by $w$.
Label the edge  from $i$ to its other children by
$u$. 
\end{itemize}
Figure \ref{COplanetreeexam.fig} gives an illustration.
The weight of a tree
in ${\mathcal O}_n$
is defined to be the product of the labels.
Let $G_{n}(r,  s,  t; u,  v,  w)$
denote the total weight of the
trees in $\mathcal{O}_n$.

\begin{figure}
\begin{center}
    \begin{tikzpicture}
 [vertex/.style={shape=circle,   draw,   inner sep=1pt,   fill=black},  
 subtree/.style={shape=ellipse,   draw,  minimum width=1.5cm,   minimum height=.5cm},  
 every fit/.style={ellipse,  draw,  inner sep=-2pt},  
sibling distance=2.5cm,  level distance=16mm,  
 leaf/.style={label={[name=#1]below:$ $}},  scale=0.7]
 
\node[vertex,  label=30:{$0$}]{}[grow=down]
child {node [vertex,   label=-90:{$r$}]{}edge from parent [dashed]}
child {node [vertex,  label=0:{$5$}]{}child {node [vertex,   label=-90:{$t$}]{}edge from parent [dashed]}
edge from parent [solid] node[right,  yshift=-3.5pt]{$v$}}
child {node [vertex,   label=-90:{$r$}]{}edge from parent [dashed]}
child {node [vertex,  label=10:{$3$}]{}[sibling distance=2cm,  level distance=16mm]
child {node [vertex,   label=-90:{$s$}]{}edge from parent [dashed]}
  child {node [vertex,   label=0:{$2$}]{}[sibling distance=2cm,  level distance=16mm]
  child {node [vertex,   label=-90:{$s$}]{}edge from parent [dashed]}
  child {node [vertex,   label=0:{$1$}]{}
   child {node [vertex,   label=-90:{$t$}]{}edge from parent [dashed]}
  edge from parent [solid] node[right]{$vw$}}
  child {node [vertex,   label=-90:{$s$}]{}edge from parent [dashed]} edge from parent node[right,  yshift=-4.5pt]{$uv$}}
  child {node [vertex,   label=-90:{$s$}]{}edge from parent [dashed]}
 child{node[vertex,   label=0:{$4$}]{}
 child {node [vertex,   label=-90:{$t$}]{}edge from parent [dashed]}
 edge from parent node[left, yshift=-4.5pt,  xshift=-0.3pt]{$vw$} }
 child {node [vertex,   label=-90:{$s$}]{}edge from parent [dashed]}edge from parent [solid] node[left,  yshift=-3.5pt]{$v$}} 
 child {node [vertex,   label=-90:{$r$}]{}edge from parent [dashed]};
\end{tikzpicture}
 \end{center}
\caption{The labeling of a plane tree in $\mathcal{O}_{5}$}
\label{COplanetreeexam.fig}
 \end{figure} 

To read off the insertion algorithm in accordance 
with the above labeling scheme, it is straightforward to
compose a grammar, with special attention only to the
insertion associated with an edge labeled by $w$,
which we call a $w$-insertion, as shown in Figure 
 \ref{w-insertion}. Notice that a new leaf is created
 by a $w$-insertion. So we get the rule 
$$w \to stuvw.$$

\begin{figure}[ht]
\centering
\begin{minipage}{0.3\textwidth}
\centering
\begin{tikzpicture}[scale=0.7, 
  level 1/.style = { level distance   = 13mm, 
                     sibling distance = 20mm }, 
  level 2/.style = { level distance   = 13mm, 
                     sibling distance = 20mm }, 
  double line/.style={double,  double distance=1pt,  line width=0.18mm}, 
  ]

\node(root){}
    child{ node [ns, label=30:{$i$}](i){}[grow=down]
        child {node [ns, label=270:{$j_1$}](1){}}
        child {node [ns, label=270:{$j_d$}](d){}
        }
    };
    \node [right=4mm] at (1){$\ldots$};
    \node [right=0.511mm,  above=2.7mm] at (d){$vw$};
    \draw[double line] (i) -- (1);
    \draw[double line] (i) -- (d);
    \draw (root) -- (i);
\end{tikzpicture}
\end{minipage}%
\begin{minipage}{0.1\textwidth}
\centering
$\displaystyle \xrightarrow{\hspace*{0.5cm}}$ 
\end{minipage}%
\begin{minipage}{0.3\textwidth}
\centering
\begin{tikzpicture}[scale=0.7, 
  level 1/.style = { level distance   = 13mm, 
                     sibling distance = 20mm }, 
  level 2/.style = { level distance   = 13mm, 
                     sibling distance = 20mm },  
  double line/.style={double,  double distance=1pt,  line width=0.18mm}]
  
\node{}
    child{
    node [ns, label={[yshift=3pt]0:{$n$}}](n){}[grow=down]
            child {node [ns, label=270:{$j_1$}](1){}}
            child {node [ns, label=270:{$j_d$}](d){}}
            child {node [ns,  label={[label distance=2.5pt]270:{$i$}}](i){}
            }
    };
    
    \node [right=5mm] at (1){$\ldots$};
    \node [right=2.5mm,  above=1.6mm] at (d){$vu$};
    \node [left=3.5mm,  above=2.7mm] at (i){$vw$};
    \draw[double line] (n) -- (1);
    \draw[double line] (n) -- (i);
    \draw[double line] (n) -- (d);
\end{tikzpicture}
\end{minipage}%

\caption{A $w$-insertion.}
\label{w-insertion}
\end{figure}

\begin{theorem}
Let $D$ be the formal derivative of the grammar   
 \begin{align}\label{chen-gram}
    G=\{ r\rightarrow r^2tv,  ~ s\rightarrow s^2tuv,  ~t\rightarrow s^2tvw,  ~u\rightarrow s^2uvw,  ~v\rightarrow s^2v^2w,  ~w\rightarrow stuvw\}.
\end{align}
For $n\geq 0$, we have   
\begin{align}\label{chen-gram-2-aim}
    D^{n}(r)=G_{n}(r,  s,  t; u,  v,  w).
\end{align}
\end{theorem}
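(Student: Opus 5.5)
We prove \eqref{chen-gram-2-aim} by induction on $n$, using the labeling scheme of Figure \ref{COplanetreeexam.fig} as a grammatical labeling, in the same spirit as the literal grammar \eqref{gram:Ramanujan} for the Ramanujan polynomials. For $n=0$, $\mathcal{O}_0$ consists of the single root $0$ with one insertion position, so its weight is $r=D^0(r)$. For the induction step, since $D$ is a derivation, $D(G_n)$ equals the sum over trees $T\in\mathcal{O}_n$ and over occurrences of a label $\ell$ in $T$ of the weight of $T$ with that one $\ell$ replaced by the right-hand side of its production rule. So it suffices to give a bijection between the pairs $(T,\ell)$ and the trees $T'\in\mathcal{O}_{n+1}$, such that the weight of $T'$ is obtained from that of $T$ by applying the rule to $\ell$.

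The insertions of $n+1$ are read off label by label, and for each we check that the weight change is exactly the corresponding production.
\begin{itemize}
\item[(1)] At an $r$-position, $n+1$ becomes a new leaf child of the root. The position splits into two $r$-positions, the new leaf carries $t$, and the new edge carries $v$; this gives $r\to r^2tv$.
\item[(2)] At an $s$-position of a nonroot internal vertex $i$, $n+1$ becomes a new leaf child of $i$. We adopt the convention that it is placed so that it is not the last child of $i$, so its edge gets $uv$. The position splits into two $s$-positions and the leaf gets $t$; this gives $s\to s^2tuv$.
\item[(3)] At a $t$-position, the leaf $i$ receives $n+1$ as its only child. Vertex $i$ becomes internal, so $t$ is replaced by two $s$-positions; the new leaf carries $t$ and the new edge carries $vw$. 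This gives $t\to s^2tvw$.
\item[(4)] A $v$-insertion subdivides an edge $(i,j)$ by placing $n+1$ between $i$ and $j$, so that $n+1$ has the single child $j$. This creates two $s$-positions around $j$ and a new edge $vw$, while the old edge label is kept; this gives $v\to s^2v^2w$.
\item[(5)] The $w$-insertion of Figure \ref{w-insertion}: $n+1$ takes the place of $i$ and adopts the children $j_1,\dots,j_d$, and $i$ becomes its last child, now a leaf. This gives $w\to stuvw$.
\item[(6)] The $u$-insertion is the analogous operation performed at a $u$-edge (not the last child), producing $u\to s^2uvw$.
\end{itemize}
For each case we check that the labels of all other vertices and edges are unaffected, so the weight change is exactly the production.

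The main obstacle is bijectivity: from $T'\in\mathcal{O}_{n+1}$ we must recover both $T$ and the label used. The plan is to decide by the shape around $n+1$.
\begin{itemize}
\item If $n+1$ is a leaf, its parent type (the root, a nonroot vertex with other children, or a vertex whose only child is $n+1$) tells us whether an $r$-, $s$- or $t$-insertion was used, and the position of $n+1$ among its siblings tells us which slot.
\item If $n+1$ is internal with exactly one child, the insertion came from a $v$ or a $w$/$u$ operation according to whether that child is an old leaf that has become the last child. Otherwise $n+1$ has several children and we undo the $w$- or $u$-insertion by reattaching them to $i$.
\end{itemize}
I expect the delicate part to be choosing the conventions in (2) and (6) (on which side of a slot the new child goes, and which edges are relabelled $u$ versus $w$) so that these cases are disjoint and exhaustive. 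I would fix the conventions precisely by matching the figures. I would then confirm them with the count: the number of labels in each tree must sum, over $\mathcal{O}_n$, to $|\mathcal{O}_{n+1}|=(n+1)!\,C_{n+1}$, and I would check this for small $n$ against $D^n(r)$ before writing the general argument.
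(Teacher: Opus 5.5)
Your framework is the paper's: the labeling of Figure \ref{COplanetreeexam.fig} serves as a grammatical labeling, and the Shor--Dumont--Ramamonjisoa insertion is transplanted to plane trees. Your rules (1), (3), (4), (5) are correct. The gap is in the bijection, which fails as written in two places.

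First, the convention in (2) cannot be adopted. A nonroot internal vertex $i$ with $k$ children carries $k+1$ labels $s$, one per gap, but only $k$ gaps leave $n+1$ not last. So two labels give the same tree, and no tree in which $n+1$ is a leaf and the last of at least two children of a nonroot vertex is ever produced. You must use all $k+1$ gaps. In the last gap, the old last edge of $i$ changes from $vw$ to $uv$ and the new edge gets $vw$, so the net change is still $s\to s^2tuv$. That relabelling is what has to be checked.

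Second, your inverse for the case where $n+1$ has one child is wrong. In a $u$- or $w$-insertion the vertex $i$ is internal, so $d\ge 1$ and $n+1$ ends with $d+1\ge 2$ children. Hence a single child always means a $v$-insertion. Your test ``whether that child is an old leaf that has become the last child'' would misclassify a $v$-insertion on an edge ending at a leaf. When $n+1$ has at least two children, let $c$ be its last child. It was a $w$-insertion if $c$ is a leaf and a $u$-insertion otherwise. In both cases you undo it by deleting $n+1$, putting $c$ in its place, and prepending the other children of $n+1$ to the children of $c$.

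For this to be the inverse, (6) must be the following operation, which is Figure \ref{fig:4} with every edge treated as improper. Take a $u$-edge $(i,j_d)$, where $i$ has children $j_1,\ldots,j_l$ with $d<l$. Let $n+1$ replace $i$, with children $j_1,\ldots,j_d,i$, while $i$ keeps $j_{d+1},\ldots,j_l$; the $w$-insertion is the case $d=l$. The weight change is then as follows:
\begin{itemize}
\item the factor $s^{l+1}$ of $i$ becomes $s^{d+2}\cdot s^{l-d+1}$;
\item the new edge $(n+1,i)$ carries $vw$;
\item the edge $(n+1,j_d)$ keeps its $u$, and $(i,j_l)$ keeps its $w$.
\end{itemize}
The net effect is $u\to s^2uvw$.

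With these repairs, every $T'\in\mathcal{O}_{n+1}$ arises from exactly one pair $(T,\ell)$:
\begin{itemize}
\item if $n+1$ is a leaf, its parent determines an $r$-, $s$- or $t$-insertion and its gap determines the label;
\item if $n+1$ has one child, the pair comes from a $v$-insertion;
\item if $n+1$ has at least two children, it comes from a $u$- or $w$-insertion as above.
\end{itemize}
Your induction then goes through.
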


One may compute the generating function of $D^n(r)$
by the grammatical calculus. Examples of using the
grammatical calculus to compute generating functions
can be found in 
\cite{Chen-Fu-2017,  
Chen-Fu-2023-JACO,  Chen-Fu-2023-AC,  Chen-Fu-Wang-2025,  Dong-etal}.

Setting $r=a$, $s=u=c$ and $t=v=w=b$, 
the grammar \eqref{chen-gram} reduces to the grammar
\eqref{Ma-Ballot}.
If we replace  $u,t,v,w$ by $s$ in \eqref{chen-gram}, then
we get the grammar  
\begin{align}
    G=\{r\rightarrow r^2s^2,  ~s\rightarrow s^5\},
\end{align}
which is a grammar for the ballot numbers. 
Let $D$ be the formal derivative of the above
grammar. Indeed, for $n\geq 0$, we have
\[ D^{n+1}(r) \big|_{s=1} =(n+1)!\sum_{k=0}^n T(n,  k) r^{n-k+2}. \]

\section{Grammars and Stable Polynomials}
\label{sec:9}
In this section, we demonstrate 
how grammars along with labeling schemes 
can play a role in constructing
stable multivariate polynomials associated with
certain combinatorial structures. 

A multivariate polynomial 
$f(z_1,  z_2,  \dots,  z_n) \in \mathbb{C}[z_1,  z_2,  \dots,  z_n]$ is called stable if 
\[ f(z_1,  z_2,  \dots,  z_n) \neq 0\]
as long as $\Ima(z_i) > 0$ for $(1\leq i \leq n)$.
A stable multivariate polynomial with real coefficients
is said to be real stable. 
A univariate polynomial $f(z) \in \mathbb{R}[z]$ 
with real coefficients is stable if and
only if it is real-rooted. 

Many univariate combinatorial polynomials are real-rooted,
while their multivariate refinements are stable. We now
consider a family of such multivariate combinatorial
polynomials. In answer to the two questions of Haglund-Visontai \cite{Haglund-Visontai} on stable
multivariate refinements of the second-order 
Eulerian polynomials, 
Chen-Hao-Yang \cite{Chen-Hao-Yang-2020} 
obtained grammars to generate
the Legendre-Stirling permutations and marked 
Stirling permutations.

For $n\geq 1$, let $[n]_2$ denote the multiset
$\{1,  1,  2,  2,  \dots,  n,  n\}$. Let $\pi = \pi_1\pi_2 \cdots \pi_{2n-1}\pi_{2n}$ be a permutation of 
$[n]_2$. We say that $\pi$ is a Stirling permutation
if for any $1\leq i \leq n$,  the elements between
the two occurrences of $i$, if any, are greater than $i$. 
Moreover, we assume that $\pi_0 = \pi_{2n+1} = 0$. 
For $n\geq 1$, let $Q_n$ denote the set of Stirling 
permutations of $[n]_2$.

For the purpose of producing stable multivariate 
polynomials related to Stirling permutations, 
we introduce the notion of marked Stirling permutations,
see Chen-Hao-Yang \cite{Chen-Hao-Yang-2020}. 
Let $\pi = \pi_1 \pi_2 \cdots \pi_{2n}$ be a Stirling
permutation of $[n]_2$. If  $\pi_i$ is the second
occurrence of an element $k$ and $\pi_i < \pi_{i+1}$,
then we may mark the element $\pi_i$ with a bar. Note that
it is also eligible not to mark this element.
A marked Stirling permutation is derived from
a Stirling permutation by
adding some bars.
Let $\bar{Q}_n$ denote the set of marked
Stirling permutations of $[n]_2$. 
For example, for $n=2$, there are four
marked Stirling permutations: 
\begin{align*}
    2211,  \quad 1221,  \quad 1122,  \quad 1\bar{1}22.
\end{align*}

Egge \cite{Egge} 
introduced the notion of 
Legendre-Stirling permutations. 
For $n \ge 1$, let $M_n$ denote the
multiset $\{1,  1,  \bar{1},  2,  2,  \bar{2},  \dots,  n,  n,  \bar{n}\}$. 
Let  $\pi = \pi_1 \pi_2 \dots \pi_{3n}$ be a permutation
of $M_n$. 
We say that $\pi$ is a Legendre-Stirling permutation
if for any $1\leq i \leq n$,
the elements between the two occurrences of the unbarred $i$, if
any, are greater than $i$. Note that the bars are
ignored as far as the comparison is concerned. 
We also assume that $\pi_0 = \pi_{3n+1} = 0$.

We first define the multivariate polynomial associated
with Legendre-Stirling permutations. Let $X = (x_1,  x_2,  \dots,  x_n)$, $Y = (y_1,  y_2,  \dots,  y_n)$,  $Z = (z_1,  z_2,  \dots,  z_n)$,  $U = (u_1,  u_2,  \dots,  u_n )$  and $V = (v_1,  v_2,  \dots,  v_n )$. Define  
\begin{align*}
    B_n(X,  Y,  Z,  U,  V) =  \sum_{\pi} \prod_{i \in X(\pi)} x_{\pi_i} \prod_{i \in Y(\pi)} y_{\pi_i} \prod_{i \in Z(\pi)} z_{\pi_i} \prod_{i \in U(\pi)} u_{\pi_i} \prod_{i \in V(\pi)} v_{\pi_i}, 
\end{align*}
where $\pi$ ranges over  Legendre-Stirling permutations
of $M_n$, $X(\pi),  Y(\pi),  Z(\pi),  U(\pi)$ and $V(\pi)$ 
are defined by 
\begin{align*}
X(\pi) &= \{i \mid \pi_{i-1} \le \pi_i,  \;  \pi_i 
\text{ is unbarred and is the first occurrence}\},  \\
Y(\pi) &= \{i \mid \pi_i > \pi_{i+1},  \;  \pi_i \text{ is unbarred}\},  \\
Z(\pi) &= \{i \mid \pi_{i-1} \le \pi_i,  \; \pi_i \text{ is
unbarred and is the second occurrence}\},  \\
U(\pi) &= \{i \mid \pi_{i-1} \le \pi_i,  \; 
\pi_i \text{ is barred}\},  \\
V(\pi) &= \{i \mid \pi_i > \pi_{i+1},  \; \pi_i \text{ is barred}\}.
\end{align*}

The polynomials \(B_n(X,Y,Z,U,V)\) 
can be generated by a grammar along with a labeling scheme.
 Using the grammar representation, 
 the stability of the multivariate polynomials
 follows from the differential operator
 with respect to the grammar preserves the 
 stability.  Here we need a theorem 
 of Borcea-Br\"and\'en \cite{Borcea-Branden-2009}
 on linear operators that preserve the stability of
 multiaffine polynomials. 

A polynomial $f(z_1,  z_2,  \dots,  z_n)$  is called
multiaffine if the degree of any variable $z_i$ is
at most one.  Let $\mathbb{C}^{\rm ma}[z_1,  z_2,  \dots,  z_n]$ denote the vector space of 
multiaffine polynomials in $z_1, z_2, \ldots, z_n$ over
complex numbers. A linear operator $T$ is called  stability preserving
if for any stable polynomial
$f \in \mathbb{C}^{\rm ma}[z_1,  z_2,  \dots,  z_n]$, $T(f)$ is
either stable or identically zero.

\begin{theorem}[Borcea-Br\"and\'en \cite{Borcea-Branden-2009}]
    Let $T$ be a linear operator from  $\mathbb{C}^{\rm ma}[z_1,  z_2,  \dots,  z_n]$ to $ \mathbb{C}[z_1,  z_2,  \dots,  z_n]$. If the polynomial
    $$T\left( \prod_{i=1}^n (z_i + w_i) \right) \in \mathbb{C}[z_1,  \dots,  z_n,  w_1,  \dots,  w_n]$$
    is stable, then $T$ is stability preserving on 
    multiaffine polynomials. 
\end{theorem}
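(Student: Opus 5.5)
The plan is to express $T(f)$ directly in terms of the symbol
\[
\Phi(z,w)=T\Bigl(\prod_{i=1}^n (z_i+w_i)\Bigr),
\]
by a differential operator in the $w$-variables, and then to invoke a stability-preserving property of such operators (a Lieb--Sokal type lemma).

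\textbf{Step 1: the symbol encodes $T$.} Write a multiaffine $f$ as $f(z)=\sum_{S\subseteq[n]} a_S z^S$, where $z^S=\prod_{i\in S}z_i$. Expanding the product gives
\[
\Phi(z,w)=\sum_{S} T(z^S)\, w^{[n]\setminus S}.
\]
Hence $T(z^S)$ is the coefficient of $w^{[n]\setminus S}$, and it is extracted by $\partial_w^{[n]\setminus S}$ followed by setting $w=0$. By linearity,
\[
T(f)(z)=\Bigl(\sum_S a_S\,\partial_w^{[n]\setminus S}\Bigr)\Phi(z,w)\Big|_{w=0}.
\]

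\textbf{Step 2: the operator comes from a stable polynomial.} Put $P(w)=w^{[n]}f(-1/w_1,\dots,-1/w_n)=\sum_S a_S(-1)^{|S|}w^{[n]\setminus S}$. The map $w\mapsto -1/w$ sends the upper half-plane to itself, and $w^{[n]}\neq 0$ there. So $P$ is stable (and multiaffine) whenever $f$ is. A sign count gives
\[
P(-\partial_w)=(-1)^n\sum_S a_S\,\partial_w^{[n]\setminus S}.
\]
Therefore, up to the global sign $(-1)^n$, $T(f)=\bigl(P(-\partial_w)\Phi\bigr)\big|_{w=0}$.

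\textbf{Step 3: conclude.} First I would prove the Lieb--Sokal lemma: if $P(w)$ and $F(z,w)$ are stable, then $P(-\partial_w)F$ is stable or identically zero. Granting it, $P(-\partial_w)\Phi$ is stable or zero. Setting $w=0$ is the limit of setting $w=(i\varepsilon,\dots,i\varepsilon)$ as $\varepsilon\to0^+$. Each such specialization is stable in $z$, and by Hurwitz's theorem the limit is stable or identically zero. This gives exactly the required conclusion for $T(f)$.

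\textbf{Main obstacle: the Lieb--Sokal lemma.} This is where the real content lies. I would prove it by induction on the number of $w$-variables, using multiaffinity to write $P=P_0+w_nP_1$. In one variable the key case is that $F\mapsto F-c\,\partial_{w}F$ preserves stability for $c$ in the closed lower half-plane, or for $c\ge 0$ in the real case. I would establish this via $F-c\,\partial_wF=\lim_{N\to\infty}N\,\bigl(F(\dots,w)-F(\dots,w+c/N)\bigr)/(c/N)$-type arguments, or more cleanly by the representation $(1-c\partial_w)F=e^{w/c}\,(-c\,\partial_w)\bigl(e^{-w/c}F\bigr)$ together with Gauss--Lucas in the variable $w$ for fixed $z$ in the upper half-plane. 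For the inductive step I would use the Hermite--Biehler/Obreschkoff characterization of stability of $P_0+w_nP_1$ in terms of $P_0,P_1$. I expect the sign bookkeeping and the reduction of a general stable multiaffine $P$ to such elementary steps to be the delicate part.
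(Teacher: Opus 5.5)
The paper gives no proof of this theorem; it quotes it from Borcea--Br\"and\'en and uses it as a black box. Your Steps 1--3 are correct and follow the standard symbol argument. You expand $\Phi=\sum_S T(z^S)\,w^{[n]\setminus S}$, note that $P(w)=w^{[n]}f(-1/w)$ is stable and multiaffine, and get $T(f)=(-1)^n\bigl(P(-\partial_w)\Phi\bigr)\big|_{w=0}$. The limit $w=i\varepsilon\mathbf{1}\to 0$ is a legitimate use of Hurwitz, since the specializations cannot vanish identically: $(z,i\varepsilon\mathbf{1})$ lies in the open upper half-space. Everything therefore rests on the Lieb--Sokal lemma, and your plan for it has a genuine gap.

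\textbf{The gap.} Splitting $P=P_0+w_nP_1$ and applying the induction hypothesis to $P_0$ and $P_1$ separately only shows that $A=P_0(-\partial_{w'})F$ and $B=P_1(-\partial_{w'})F$ are each stable. It says nothing about $P(-\partial_w)F=A-\partial_{w_n}B$, which needs the joint fact that $A+yB$ is stable in $(z,w,y)$.

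The constant-coefficient case $F-c\,\partial_wF$ cannot supply this coupling, because a stable multiaffine $P$ need not split into linear factors in separate variables. For example, $f=z_1z_2-1$ is stable and gives $P=1-w_1w_2$, i.e.\ the operator $1-\partial_{w_1}\partial_{w_2}$. Moreover, Hermite--Biehler/Obreschkoff concern real polynomials, while $P$ is complex in general: $f=z_1+i$ gives $P=iw_1-1$.

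\textbf{The fix.} Let the induction carry free variables. Start from $P(v)\Phi(z,w)$, which is stable in $(v,z,w)$ as a product in disjoint variables. Then replace $v_1,\dots,v_n$ by $-\partial_{w_1},\dots,-\partial_{w_n}$ one at a time. At each stage the current polynomial is affine in the next $v_j$, so each step is an instance of a two-polynomial lemma: if $g+yf$ is stable, then $g-\partial_{w_j}f$ is stable or $0$. This lemma is the complex substitute for Hermite--Biehler, and it has a short proof when $f\not\equiv 0$:
\begin{itemize}
\item $f$ is stable, and $\mathrm{Im}(g/f)\ge 0$ on the open upper half-space.
\item Freezing the other variables there, $\partial_{w_j}f/f=\sum_k (w_j-r_k)^{-1}$ with $\mathrm{Im}\,r_k\le 0$, so it has nonpositive imaginary part.
\item By the minimum principle, either $\mathrm{Im}(g/f)>0$ everywhere, and then $g-\partial_{w_j}f=f\,(g/f-\partial_{w_j}f/f)\neq 0$; or $g=\lambda f$ with $\lambda$ real, which is your constant-coefficient case (plain differentiation if $\lambda=0$).
\end{itemize}

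A smaller point: Gauss--Lucas does not apply to the non-polynomial $e^{-w/c}F$. Use the logarithmic-derivative argument as your one-variable tool instead.
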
 

Using the above criterion and the grammatical representation,
the stability of \(B_n(X,  Y,  Z,  U,  V)\) can be deduced. 

\begin{theorem}[Chen-Hao-Yang \cite{Chen-Hao-Yang-2020}]
For \(n \ge 1\), the polynomial \(B_n(X,  Y,  Z,  U,  V)\) 
is stable. 
\end{theorem}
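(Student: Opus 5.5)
The plan is to realize $B_n$ as the result of applying, $n-1$ times, a linear operator built from the formal derivative of an indexed grammar, and then to verify the Borcea-Br\"and\'en criterion for that operator.

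\emph{Grammatical labeling.} A Legendre-Stirling permutation of $M_{n+1}$ is obtained from one of $M_n$ by inserting the block $(n+1)(n+1)\overline{(n+1)}$, in some admissible arrangement, into one of the $3n+1$ gaps. The labels of $\pi$ are placed as dictated by the sets $X(\pi),Y(\pi),Z(\pi),U(\pi),V(\pi)$, so each letter $\pi_i$ carries exactly one of $x_{\pi_i},y_{\pi_i},z_{\pi_i},u_{\pi_i},v_{\pi_i}$. Every position of $\pi$ then carries exactly one label.

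\emph{Production rules.} Inserting the new letters at a gap changes only the label of the letter adjacent to the gap. I will write down the resulting rules for each old label and each admissible placement of the three new letters (the two unbarred copies adjacent, the barred copy before, between or after them). Each old label $w_j\in\{x_j,y_j,z_j,u_j,v_j\}$ produces $w_j$ times a sum of monomials in the new variables $x_{n+1},y_{n+1},z_{n+1},u_{n+1},v_{n+1}$. If the labeling is set up correctly, the old letter keeps its own index and only the ascent/descent type changes, so the rules take the form
\[
w_j\;\to\; \ell'_j\cdot P_{n+1},
\]
where $P_{n+1}$ is multiaffine in the new variables and $\ell'_j$ is multiaffine. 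This gives $B_{n+1}=T_{n+1}(B_n)$, with $T_{n+1}$ the derivation of this grammar. By construction, every monomial of $B_n$ uses each index $j$ for exactly three labels, one per letter.

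\emph{Stability.} I will check that $B_n$ is multiaffine in all $5n$ variables. This holds because each letter carries exactly one variable and the three occurrences of $j$ can be shown to receive distinct labels: the first unbarred occurrence gets $x$ or $y$, the second gets $z$ or $y$, and the barred one gets $u$ or $v$. The case where both unbarred copies get $y$ needs care, and this may force refining the labeling. Next I apply Borcea-Br\"and\'en to $T_{n+1}$ restricted to $\mathbb{C}^{\rm ma}$. Since $T_{n+1}$ is a derivation, $T_{n+1}\bigl(\prod_i(z_i+w_i)\bigr)$ equals $\prod_i(z_i+w_i)\sum_k \frac{T_{n+1}(z_k)}{z_k+w_k}$. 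This is stable exactly when $\sum_k T(z_k)/(z_k+w_k)$ has a sign-definite imaginary part on the upper half-plane. That follows if each $T(z_k)$ is of the form (positive combination of new-variable monomials) times old variables, arranged so that the expression reduces to a sum of terms like $\frac{z_k p}{z_k+w_k}$, each mapping the upper half-plane to itself. Because the new variables appear multiaffinely with nonnegative coefficients, I would factor out $P_{n+1}$, which is a real stable polynomial in the new variables (a positive sum of products of distinct variables of low degree, checked directly). I expect this step to be the main obstacle: the precise form of the rules must permit this factorization, and I would first try grouping the rules by which new variable multiplies the old label.

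\emph{Induction.} The base case $B_1$ is a small explicit polynomial, which I will verify to be stable directly. Induction via $B_{n+1}=T_{n+1}(B_n)$ then concludes the proof. The output cannot be identically zero, since all coefficients are nonnegative and the count of permutations is positive.
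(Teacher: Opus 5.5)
Your overall strategy (a grammar for the insertion step, the Borcea--Br\"and\'en criterion, induction) is the one the paper points to. However, the recursive step you build it on is wrong, so the derivation $T_{n+1}$ you propose does not produce $B_{n+1}$.

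\textbf{The recursion.} The Legendre--Stirling condition constrains only the unbarred copies. So the two letters $n+1$ must be adjacent, but $\overline{n+1}$ may go into any gap of the enlarged word except the one between the two $(n+1)$'s. It cannot go ``between them'', since bars are ignored and $\overline{n+1}$ is not larger than $n+1$. Hence a permutation of $M_n$ has $(3n+1)(3n+2)$ children, not the $2(3n+1)$ obtained by keeping the three new letters in one gap. For instance, $\overline{2}\,1\,2\,2\,1\,\overline{1}$ comes from $1\,1\,\overline{1}$ by putting $22$ and $\overline{2}$ into different gaps. Indeed $|\mathrm{LS}_2|=40$, whereas a single-gap scheme gives $16$.

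A child in which the pair and the bar occupy different gaps deletes two old labels at once. No first-order operator $f\mapsto\sum_w T(w)\,\partial_w f$ with fixed polynomial rules can produce that.

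Your description of the labeling is also off, though this is secondary:
\begin{itemize}
\item The labels sit on the $3n+1$ gaps, not on letters, so $B_n$ has degree $3n+1$; for example $B_1=x_1z_1u_1(y_1+v_1)$.
\item A letter can carry zero or two labels, and the first unbarred copy never receives $y$.
\item Every gap created next to the maximal letters is labeled by a new letter, so an old label is simply deleted, not modified.
\end{itemize}

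\textbf{The correct operator.} Put $m=n+1$ and let $\partial=\sum_w\partial_w$ run over the $5n$ old variables. The correct recursion is
\[
B_{n+1}=x_m z_m u_m\bigl[(y_m+v_m)\,\partial+y_m v_m\,\partial^2\bigr]B_n .
\]
The first term comes from $\overline{m}\,m\,m$ and $m\,m\,\overline{m}$ placed in a single gap. The second comes from ordered pairs of distinct gaps.

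Grammatically this is a composite $D_2D_1$ of two formal derivatives:
\begin{itemize}
\item $D_1$ inserts the pair: $w\to x_my_mz_m$.
\item $D_2$ inserts the bar: $w\to u_mv_m$, $x_m\to x_mu_m$, $y_m\to u_mv_m$, $z_m\to 0$.
\end{itemize}
The stability check must be done on the composite. $D_1$ passes the Borcea--Br\"and\'en test, but $D_2$ alone does not: its symbol vanishes at points of the upper half-plane. So neither checking derivations separately nor factoring out a single $P_{n+1}$, as you plan, will work.

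\textbf{The missing stability argument.} For the composite, the symbol is $x_mz_mu_m\bigl[(y_m+v_m)p'(0)+y_mv_m\,p''(0)\bigr]$, where $p(t)=\prod_w(t+\xi_w+\eta_w)$. By Gauss--Lucas the roots of $p'$ lie in the open lower half-plane. Hence $p''(0)/p'(0)$, $1/y_m$ and $1/v_m$ all lie in the open lower half-plane. Therefore
\[
y_mv_m\,p'(0)\bigl[1/y_m+1/v_m+p''(0)/p'(0)\bigr]
\]
never vanishes when all variables are in the upper half-plane. With this operator and this symbol estimate in place of your single derivation, the induction from $B_1=x_1z_1u_1(y_1+v_1)$ goes through.
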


Let  $\pi = \pi_1 \pi_2 \cdots \pi_{2n}$ be a Stirling
permutation of $[n]_2$. 
Assume that $\pi_0=\pi_{2n+1}=0$. Define
\begin{align*}
A(\pi) & 
= \{i \mid \pi_{i-1} < \pi_i, \ 1 \le i \le 2n\}, \\[3pt]
D(\pi) &  = \{i \mid \pi_i > \pi_{i+1}, \ 1 \le i \le 2n\}, \\[3pt]
P(\pi) & = \{i \mid \pi_{i-1} = \pi_i, \ 1 \le i \le 2n\}. 
\end{align*}
We now come to the multivariate polynomials in connection
with marked Stirling permutations. For a marked Stirling permutation $\bar{\pi}$, 
we define the sets  
$D(\bar{\pi})$, $A(\bar{\pi})$ and $P(\bar{\pi})$
as if the bars are invisible, 
and we still assume that
$\pi_0=\pi_{2n+1}=0$. For example, for
$\bar{\pi}=1\;\bar{1}\;2\;2$, we have
\begin{align*}
    A(\bar{\pi})=\{1,3\},\quad
    D(\bar{\pi})=\{4\},\quad
    P(\bar{\pi})=\{2,4\}.
\end{align*}

We next define the multivariate polynomial associated
with marked Stirling permutations.
Define the multivariate polynomials
$$T_n(X, Y, Z) = \sum_{\bar{\pi}} \prod_{i \in A(\bar{\pi})} x_{\pi_i} \prod_{i \in D(\bar{\pi})} y_{\pi_i} \prod_{i \in P(\bar{\pi})} z_{\pi_i},$$
where $\bar{\pi}$ ranges over 
marked Stirling permutations of $[n]_2$.

\begin{theorem}[Chen-Hao-Yang \cite{Chen-Hao-Yang-2020}]
For \(n \ge 1\),  \(T_n(X,  Y,  Z)\) is stable. 
\end{theorem}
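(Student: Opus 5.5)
We prove stability of \(T_n(X,Y,Z)\) by induction on \(n\), following the pattern used for \(B_n(X,Y,Z,U,V)\). The first step is a grammatical labeling of marked Stirling permutations. Such a permutation of \([n+1]_2\) is obtained from one of \([n]_2\) by inserting the block \((n+1)(n+1)\) into one of the \(2n+1\) gaps, and possibly barring the second copy of \(n+1\) when it is followed by a larger element.

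Under insertion, positions of the new block correspond to indices \(i\in A\cup D\cup P\) of the old word: the gap immediately before \(\pi_i\), read together with the role of \(\pi_i\). Inserting \((n+1)(n+1)\) into a gap changes only the status of the neighbouring indices, and the new element contributes \(x_{n+1}\) (its first occurrence is an ascent top) and \(z_{n+1}\) (plateau). The second copy is followed by a smaller element, so it contributes \(y_{n+1}\), unless the gap is at a plateau where further bookkeeping is needed. The bar option arises only later, when a larger element is inserted right after an unbarred second occurrence. It is designed to double certain labels, so that each variable appears in a multiaffine, symmetric way.

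Concretely, I would set up an index-dependent operator. Let \(T_{n+1}=\mathcal{L}_{n+1}(T_n)\), where
\[
\mathcal{L}_{n+1}=x_{n+1}y_{n+1}z_{n+1}\Bigl(\sum_{k=1}^n x_k\partial_{x_k}+\sum_{k=1}^n (y_k+z_k)\partial_{y_k}\ \text{or similar}\Bigr)
\]
plus a term for the end gap. The exact coefficients are to be read off from the labeling, in the style of the Chen-Hao-Yang grammar. The routine part is checking that the labeling rules reproduce the sets \(A(\bar\pi)\), \(D(\bar\pi)\) and \(P(\bar\pi)\) after each insertion, with the bar accounting for the extra \(z\)-to-\(x\) type transition.

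The stability step runs as follows. \(T_n\) is multiaffine in each of \(x_k,y_k,z_k\), since each index contributes each label at most once, so the Borcea-Br\"and\'en criterion applies. Compute the symbol
\[
\mathcal{L}_{n+1}\Bigl(\prod_{k}(x_k+a_k)(y_k+b_k)(z_k+c_k)\Bigr).
\]
Since \(\mathcal{L}_{n+1}\) is a sum of first-order operators with monomial coefficients, the symbol factors as the full product times
\[
x_{n+1}y_{n+1}z_{n+1}\sum_k \frac{\text{(coefficient)}}{\text{(variable)}+w}.
\]
Up to the product, this is a sum of terms of the form \(p/(q+w)\) with positive coefficients and first-degree \(p,q\). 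Each such summand maps the upper half-plane into a fixed half-plane, provided the coefficient variables are arranged so that each term looks like \(\frac{\alpha}{\beta+\gamma}\) with all variables in the upper half-plane. A sum of such terms is then nonzero, which gives stability of the symbol. The base case \(T_1=x_1y_1z_1\) is trivially stable.

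The main obstacle is designing the labeling, with the bars, so that the resulting operator has exactly this "positive sum of Möbius-type terms" shape. A mixed coefficient such as \(x_k z_j\) acting on \(y_k\) would break the half-plane argument. I would first try to make every rule of the form \(\ell_k\to \ell_k\cdot(\text{new labels})\), mirroring Haglund-Visontai's multivariate second-order Eulerian construction, and use the bar precisely to symmetrize the \(z\)-rule.
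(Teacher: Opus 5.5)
Your overall route (derive an insertion recurrence for $T_n$, then apply the Borcea--Br\"and\'en criterion inductively) is the one the paper intends. The gap is the step that carries the proof: you never derive the operator, and the one you guess is wrong. With $\pi_0=\pi_{2n+1}=0$, each of the $2n+1$ gaps of $\pi$ carries exactly one label. An ascent gap gives $x$ of its top, a plateau gap gives $z$, and a descent gap gives $y$ of its top, which is the element \emph{before} the gap. So the weight is the product of the gap labels. Inserting $(n+1)(n+1)$ into a gap destroys that gap's label and creates exactly $x_{n+1}z_{n+1}y_{n+1}$. The second copy of $n+1$ is always a descent top, with no plateau exception, and it can never be barred. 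Since the old label disappears, no factor like $x_k\partial_{x_k}$ appears, and the end gap needs no special term. The bars are fixed by the definition of $T_n$; you cannot design them. The top of a descent gap is necessarily a second occurrence, and inserting the new block there makes it newly barrable; no other bar eligibility changes. Hence
\[
T_{n+1}=x_{n+1}y_{n+1}z_{n+1}\sum_{k=1}^{n}\bigl(\partial_{x_k}+2\,\partial_{y_k}+\partial_{z_k}\bigr)T_n ,
\]
so the bars double the $y$-rule rather than symmetrizing a $z$-rule.

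Second, the stability principle you state is false as formulated. For $\alpha,\beta,\gamma$ in the upper half-plane, $\alpha/(\beta+\gamma)$ can lie on either side of the real axis, so sums of such terms can vanish. Concretely, the $x$-part of your operator, $\sum_k x_k\partial_{x_k}$, sends the stable multiaffine polynomial $(x_1+1)(x_2-1)$ to $2x_1x_2-x_1+x_2$. This vanishes at $x_1=i$, $x_2=(2+i)/5$, so your proposed operator shape does not preserve stability. The half-plane argument works only because the correct first-order part has constant positive coefficients. The symbol is then $x_{n+1}y_{n+1}z_{n+1}\prod_{k}(x_k+a_k)(y_k+b_k)(z_k+c_k)$ times
\[
\sum_{k=1}^{n}\Bigl(\frac{1}{x_k+a_k}+\frac{2}{y_k+b_k}+\frac{1}{z_k+c_k}\Bigr),
\]
and every term has negative imaginary part when all variables lie in the upper half-plane. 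You also need multiaffinity: each value $k$ is at most once an ascent top (via its first copy), and at most once a plateau top and at most once a descent top (via its second copy). With that and the base case $T_1=x_1y_1z_1$, induction finishes the proof.
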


Recently, using the grammatical approach 
Yang-Zhang \cite{Yang-Zhang-2025} obtained
a multivariate version of the 
Narayana polynomials and proved their stability. 

\section{Noncommutative Grammars and $q$-Grammars}

In this final section, we discuss two recent developments. 

First, the Leibniz formula holds for commutative variables.
But the context-free property \eqref{prop:context-free} 
is concerned with the 
characterization of a formal language in terms of
production rules regardless of their contexts.
In theory, we do not have to require that
the variables be commutative. 
So the problem is how to 
employ appropriate operators in a noncommutative setting
to carry out an effective grammatical calculus.

Ehrenborg \cite{Ehrenborg2024} 
took the grammar of Dumont for the
Eulerian polynomials to the noncommutative platform. 
Assume that $a$ and $b$ are noncommutative variables.
Consider the 
substitution rules 
\[
a \to ba,\; b \to ab.
\]
This noncommutative grammar is closely related to the
\(ab\)-index and the \(cd\)-index of Eulerian posets.
By studying the behavior of powers of the pyramid and prism operators on a product, Ehrenborg derived exponential generating functions for the $cd$-indices of the $n$-dimensional simplices and the $n$-dimensional cubes, thereby providing a new perspective on the theory of $cd$-indices in polyhedral combinatorics.

The second development is the $q$-grammatical calculus. 
Enumeration problems involving Mahonian statistics often lead to \(q\)-binomial coefficients, 
  also called the Gaussian polynomials. 
A fundamental question is how to build a grammatical calculus with a suitable version of the $q$-Leibniz formula. 
Han-Ji-Xiong \cite{Han-Ji-Xiong2026} 
introduced a \(q\)-analogue of the grammatical calculus,
called the \(q\)-derivative grammar, or
\(q\)-grammar for short. Specifically, for a special
class of $q$-grammars (termed $q$-linear grammars), the associated \(q\)-derivative satisfies the \(q\)-Leibniz formula: 
\[
D^n(fg)=
\sum_{k=0}^{n}
{n \brack k}_q
D^k(f)\,
\uparrow^k\!\left(D^{n-k}(g)\right),
\]
where ${n \brack k}_q$, or simply ${n \brack k}$,
are the $q$-binomial coefficients.  
As shall be seen, the $q$-grammars are
associated with a sequence of noncommutative variables $x_0, x_1, x_2, \ldots$. 
For a given   \(q\)-grammar, 
they defined the shift operator
\[ \uparrow(x_i)=x_{i+1}, \]
which can be extended linearly and multiplicatively to expressions.

As an example, consider the $q$-Eulerian polynomials 
$A_n^{\inv}(q;x,y)$ defined by 
\[
A_n^{\inv}(q;x,y)
=
\sum_{\sigma\in S_n}
q^{\inv(\sigma)}
x^{\operatorname{asc}(\sigma)}
y^{\operatorname{des}(\sigma)},
\]
where $\inv(\sigma)$ denotes the number of inversions
of $\sigma$, that is,
\[
\inv(\sigma)
=
\lvert \{(i,j): 1\le i<j \le n,\ \sigma_i>\sigma_j\} \rvert.
\]
The first few values of 
$A_n^{\inv}(q;x,y)$ are:
\begin{align*}
A_1^{\inv}(q;x,y)
&=xy,\\[3pt]
A_2^{\inv}(q;x,y)&=x^2y+qxy^2,\\[3pt]
A_3^{\inv}(q;x,y)
&=x^3y+(2q+2q^2)x^2y^2+q^3xy^3.
\end{align*}

The grammar for the  Eulerian polynomials involves two
variables $x$ and $y$. For the $q$-grammar, 
we need two sequences of variables: $(x_0,x_1, x_2,\ldots)$ and $(y_0,y_1,y_2, \ldots)$. We further impose the following
order on the variables:
\begin{equation}\label{order}
x_0< y_0 < x_1 < y_1 < x_2 < y_2< \cdots .
\end{equation}
The $q$-grammar is given by
 \begin{equation}\label{q-inv}
 G_{\inv}=\{ x_j \rightarrow q^j y_j x_{j+1},\,    \;\; y_j \rightarrow q^j y_j x_{j+1} \;|\;
   j = 0,   1,   2,   \ldots \}.
\end{equation}
The \(q\)-derivative operator \(D\) associated with
\(G_{\inv}\) is defined by 
\[ D(w_1w_2\cdots w_n)= \sum_{i=1}^n\rho
\big( w_1 \cdots w_{i-1} R(w_i) \uparrow (w_{i+1} \cdots 
w_n) \big), \]
where we assume that $w_1w_2\cdots w_n$ is written
in the normal form, $R(w_i)$ means to 
apply the substitution rules 
to $w_i$, and $\rho$ denotes the normal form 
with respect to the order (\ref{order}). Set \(D^0(f)=f\), and for \(k\geq1\), define
\[ D^k(f) = D(D^{k-1}(f)).\]

Define the evaluation map
\[
\phi(x_j)=x,\qquad \phi(y_j)=y.
\]
Let $D$ denote the $q$-derivative  
with respect to the $q$-grammar \eqref{q-inv}. For $n\geq 1$, we have 
\[
\phi\bigl(D^n(x_0)\bigr)
=
A_n^{\inv}(q;x,y).
\]

For example,  
\begin{align*}
    D(x_0)& =R(x_0)=y_0x_1,\\[3pt]
    D^2(x_0) & = D(y_0x_1)=R(y_0)x_2+y_0R(x_1)=y_0x_1x_2+qy_0y_1x_2, \\[3pt] 
    D^3(x_0) & =D(y_0x_1x_2+qy_0y_1x_2) \\[3pt] 
    &=R(y_0)x_2x_3+y_0R(x_1)x_3+y_0x_1R(x_2)+qR(y_0)y_2x_3+qy_0R(y_1)x_3+qy_0y_1R(x_2)\\[3pt] 
    &=y_0x_1x_2x_3+qy_0y_1x_2x_3+q^2y_0x_1y_2x_3+qy_0x_1y_2x_3+q^2y_0y_1x_2x_3+q^3y_0y_1y_2x_3.
\end{align*}
Consequently, 
\begin{align*}
   \phi(D(x_0))& = xy,\\[3pt]
    \phi(D^2(x_0)) & =  x^2y+qxy^2, \\[3pt] 
    \phi(D^3(x_0)) & = yx^3+(2q+2q^2)y^2x^2+q^3y^3x.
\end{align*}

Using a grammatical labeling for \(G_{\inv}\),
Han-Ji-Xiong \cite{Han-Ji-Xiong2026} showed that
 the $q$-Eulerian polynomials 
 $A_n^{\inv}(q;x,y)$ can be generated by 
 the above $q$-grammar. Furthermore,
 they derived its \(q\)-exponential generating function by
 the $q$-grammatical calculus. They also found a \(q\)-grammar for the \(q\)-Roselle polynomials and derived their
\(q\)-exponential generating function. The 
 $q$-grammars for two kinds of the $q$-Andr\'e polynomials are also given.

\noindent {\bf Acknowledgments.}  
The author is grateful to the referees
for their valuable suggestions.

\end{document}